\newcommand\blfootnote[1]{%
  \begingroup
  \renewcommand\thefootnote{}\footnote{#1}%
  \addtocounter{footnote}{-1}%
  \endgroup
}
\newcommand{\const}{{\textrm const}}
\newcommand{\id}{\operatorname{id}}
\newcommand{\supp}{\operatorname{supp}}
\newcommand{\cA}{{\mathcal A}}
\newcommand{\cC}{{\mathcal C}}
\newcommand{\cB}{{\mathcal B}}
\newcommand{\cF}{{\mathcal F}}
\newcommand{\cO}{{\mathcal O}}
\newcommand{\tAB}{\widetilde{\cA}_{\rm Borel}}
\newcommand{\tAb}{{\widetilde{\cA}_{\rm ball}}}
\newcommand{\cM}{{\mathcal M}}
\newcommand{\cN}{{\mathcal N}}
\newcommand{\R}{{\mathbbm R}}
\newcommand{\N}{{\mathbbm N}}
\newcommand{\Q}{{\mathbbm Q}}
\newcommand{\Z}{{\mathbbm Z}}
\newcommand{\C}{{\mathbbm C}}
\newcommand{\T}{{\mathbbm T}}
\newcommand{\lcm}{\operatorname{lcm}}
\newcommand{\card}{\#} 
\newcommand{\tS}{{\widetilde S}}
\newcommand{\tX}{{\widetilde X}}
\newcommand{\tu}{\tilde{u}}
\newcommand{\tx}{\tilde{x}}
\newcommand{\ty}{\tilde{y}}
\newcommand{\td}{{\tilde d}}
\newcommand{\tpi}{{\widetilde \pi}}
\newcommand{\tm}{{\tilde m}}
\newcommand{\ddelta}{\boldsymbol{\delta}}
\newcommand{\dl}{\underline{d}}
\newcommand{\du}{\overline{d}}
\renewcommand{\mod}{\text{ mod }}
\newcommand{\ac}{\operatorname{ac}}
\newcommand{\Br}{\cB_{|r}}
\newcommand{\Bl}{\cB_{|\ell}}
\newcommand{\jm}{m}
\newcommand{\Eig}{\mathrm{Eig}}
\newcommand{\Bap}{\mathrm{Bap}}
\newtheorem{theorem}{Theorem}[section]
\newtheorem{lemma}[theorem]{Lemma}
\newtheorem{proposition}[theorem]{Proposition}
\newtheorem{corollary}[theorem]{Corollary}
\theoremstyle{definition}
\newtheorem{definition}[theorem]{Definition}
\newtheorem{example}[theorem]{Example}
\theoremstyle{remark}
\newtheorem{remark}[theorem]{Remark}
\numberwithin{equation}{section}
\newcommand{\tsigma}{{\widetilde{\sigma}}}
\newcommand{\DD}{\overline D_1}
\newcommand{\dd}{\overline d_1}
\begin{document}

\title{Besicovitch covering numbers for $\cB$-free and other shifts}

\author[1]{Stanis\l{}aw Kasjan}
\author[2]{Gerhard Keller\thanks{The authors thank Nicolae Strungaru for an enlightening E-mail exchange pointing out the relevance of his joint work (published and in preparation) with D.~Lenz and T.~Spindeler for the general framework of this paper as described in Section~\ref{sec:introduction}.  The authors also thank Christoph Richard for pointing out the relevance of reference \cite{Wiener-Wintner-2} by Wiener and Wintner.}}
\affil[1]{\small Faculty of Mathematics and Computer Science, Nicolaus Copernicus University, Chopina 12/18, 87–100 Toruń, Poland, Email: skasjan@mat.umk.pl}
\affil[2]{\small Department of Mathematics, University of Erlangen-N\"urnberg,
 Cauerstr. 11, 91058 Erlangen, Germany, Email: keller@math.fau.de}
\date{Version of \today}


\maketitle
\begin{abstract}
For a finite alphabet $A$ define by
$d_1(x,y):=\limsup_{n\to\infty}\frac{1}{2n+1}\#\{|i|\le n: x_i\neq y_i\}$
the \emph{Besicovitch pseudo-metric} on $A^\Z$. It is well known that a closed subshift of $A^\Z$ has finite covering numbers w.r.t.~$d_1$ if and only if it is mean-equicontinuous. Here we study, more generally, the scaling behaviour of these covering numbers for individual orbits which are generic for an ergodic measure $\mu$ on $A^\Z$ with discrete spectrum, and we explore their usefulness as invariants for block code equivalence. We illustrate this by developing tools to determine these covering numbers for various classes of $\cB$-free numbers (in particular also for square-free numbers), and we provide a continuous family of measures $\mu_s$, all with the same discrete spectrum generated by a single number, but such that $\mu_s$- and $\mu_{s'}$-typical $x$ resp. $x'\in A^\Z$ have so different covering numbers that there are no finite block codes mapping $x\to x'$ and $x'\to x$. (Indeed, both orbits have different amorphic complexities.)
\end{abstract}

\blfootnote{\emph{MSC 2020 clasification:} 37A35, 37A44}
\blfootnote{\emph{Keywords:} Besicovitch pseudo-metric, Besicovitch covering numbers, amorphic complexity, Besicovitch almost periodic points, $\cB$-free shifts, block code equivalence}

\tableofcontents

\section{Introduction}\label{sec:introduction}

\subsection{The Besicovitch metric on subshifts}\label{subsec:intro-Besicovitch}

Let $A$ be a finite alphabet, $\# A\ge2$, and denote by $A^\Z$ the full two-sided shift over $A$. Define the \emph{Besicovitch pseudo-metric}
\begin{equation*}
d_1(x,y):=\limsup_{n\to\infty}\frac{1}{2n+1}\#\{|i|\le n: x_i\neq y_i\}
\end{equation*}
on $A^\Z$.
Factoring by the equivalence relation $x\sim y$ $:\Leftrightarrow$ $d_1(x,y)=0$ yields a factor space $A^\Z_\sim$ with factor map $\tpi:x\mapsto \tx$ and metric $\td_1(\tx,\ty):=d_1(x,y)$. The pseudo-metric $d_1$ is motivated by classical work on almost periodic functions, see \cite{Blanchard1997} for some more background and for the following results:
\begin{quote}
The space $(A^\Z_\sim,d_1)$ is a complete metric space. It is neither separable nor locally compact, but it is pathwise connected and infinite-dimensional. The shift map $\sigma$ on $A^\Z$ determines a shift
$\tsigma$ on $A^\Z_\sim$ by $\tsigma\tx=\widetilde{\sigma x}$, and $\tsigma:A^\Z_\sim\to A^\Z_\sim$ is an isometry w.r.t.~the metric $\td_1$ on $A^\Z_\sim$.
\end{quote}
Moreover, as $d_1:A^\Z\times A^\Z\to[0,\infty)$ is Borel measurable, the factor map $\tpi:A^\Z\to A^\Z_\sim$ is measurable w.r.t. the $\sigma$-algebra generated by open $\td_1$-balls in $A^\Z_\sim$, which we denote by $\tAb$.
 As $A^\Z_\sim$ is not separable, one cannot expect that $\tAb$ is the full Borel-$\sigma$-algebra $\tAB$ on $(A_\sim^\Z,d_1)$. On
separable
subspaces of $A^\Z_\sim$, in particular on orbit closures, the traces of $\tAb$ and of $\tAB$ coincide however.
This suggests to restrict to invariant subshifts $X$ (not necessarily closed) of $A^\Z$ and to study
$$
X_\sim:=\tpi(X)
$$
or its closure w.r.t.~$\td_1$ and to investigate properties of the dynamical system $(A^\Z_\sim,\tsigma,\\\mu\circ\tpi^{-1})$ derived from an ergodic system $(A^\Z,\sigma,\mu)$. This  complements classical results by Wiener and Wintner \cite[Thms.~6 - 8]{Wiener-Wintner-2} on Besicovitch almost periodicity,
see our Theorems~\ref{theo:basic}(i) and~\ref{theo:precisions}. In particular, Theorem~\ref{theo:precisions}b helps to check whether a specific orbit is Besicovitch almost periodic. Generalizations of \cite{Wiener-Wintner-2}\footnote{The reader may also consult Veech's commentary on \cite{Wiener-Wintner-2} in the Collected Works of Norbert Wiener.} and more details are provided in the recent paper \cite{Lenz2023}. Because of this detailed exposition we chose to cite from \cite{Lenz2023} when we need some precise facts for our proofs, being fully aware that in the case of $\Z$-actions much of what is needed is contained in \cite{Wiener-Wintner-2}.

\emph{Notation for closures:} The closure of $X$ in $A^\Z$ w.r.t.~the usual product topology is denoted by $\overline{X}$ and that of $X_\sim$ in $A^\Z_\sim$ w.r.t.~the topology determined by the metric $\td_1$ is denoted by $\overline{X_\sim}$. So we use the same closure-bar in both topologies, but it will always be clear from the context to which topology it refers.

\subsection{Besicovitch covering numbers}\label{subsec:Bcn}

Some aspects of the metric geometry of a space $(X,d_1)$, equivalently of $(X_\sim,\td_1)$, are captured by (the scaling behaviour of) its covering numbers:
\begin{definition}\label{def:covering-numbers}
The covering number $\cN_\epsilon(X)$ of $X$ is the smallest number of open $d_1$-balls of radius $\epsilon$ centered at some point of $X$ which is necessary to cover $X$.
It coincides with $\cN_\epsilon(X_\sim)$, the smallest number of open $\td_1$-balls of radius $\epsilon$ centered at some point of $X_\sim$ which is necessary to cover $X_\sim$.

Observe that, for each real number $M>1$,
\begin{equation*}
\cN_{M\epsilon}(\overline{X_\sim})\le\cN_\epsilon(X_\sim)=\cN_\epsilon(X)\le \cN_{M^{-1}\epsilon}(\overline{X_\sim}).
\end{equation*}
\end{definition}
Recall that $(X_\sim,\td_1)$ is relatively compact if and only if $\cN_\epsilon(X)<\infty$ for all $\epsilon>0$. Observe also that $(X_\sim,\td_1)$ is relatively compact if and only if $(\overline{X_\sim},\td_1)$ is compact, because the ambient space $A^\Z_\sim$ is complete.

In this paper we attempt to study covering numbers $\cN_\epsilon(\cO(x))$ of shift orbits $\cO(x):=\{\sigma^nx:n\in\Z\}$ - with special emphasis on the case where $x$ is the indicator function of a set of $\cB$-free numbers, see Section~\ref{sec:B-free}. Here are two different approaches:
\begin{enumerate}[a)]
\item
Fix any $x\in A^\Z$ and consider the closed subshift $X=\overline{\cO(x)}$, where the closure is taken in the usual product topology on $A^\Z$. This is the situation for which the numbers $\cN_\epsilon(X)$ and their scaling behaviour were studied in
\cite{FGJ-2016, FGJK-2021}.\,\footnote{The approach in both these papers is based on separation numbers, indeed. But, as noted in \cite[Sec.~3.2]{FGJ-2016}, this is equivalent to using spanning numbers which, in turn, are identical to our covering numbers.}

Recall that $(X,\sigma)$ is \emph{mean equicontinuous}, if
the factor map $\tpi:X\to(X_\sim,\td_1)$ is continuous. In this case
$(X_\sim,\td_1)$ is a compact metric space.

For a minimal topological dynamical system $(X,T)$ the following are equivalent:
\begin{compactenum}[(i)]
\item $(X,T)$ is mean equicontinuous.
\item The $d_1$-covering numbers $\cN_\epsilon(X)$, $\epsilon>0$, are finite.
\item $(X,T)$ is uniquely ergodic (with a measure $\mu$), and the factor map from $(X,T)$ to its maximal equicontinuous factor (MEF) is a measurable isomorphism between $(X,T,\mu)$ and the MEF equipped with its unique invariant measure. (In short: $(X,T)$ is an \emph{isomorphic extension} of its MEF.)
\end{compactenum}
For (i) $\Leftrightarrow$ (ii) see \cite[Cor.~3.7]{FGJK-2021}. The equivalence (i) $\Leftrightarrow$ (iii) follows from \cite[Thm.~2.1]{DG15} and \cite[Thm.~3.8]{Li2015}, as was noticed  in \cite[Thm.~3.2]{Garcia-Ramos2019}

If $X=\overline{\cO(x)}$ is minimal and mean equicontinuous, the set $X_\sim$
carries the structure of a compact abelian group isomorphic to the MEF \cite[Thm.~3.5]{Li2015}, see also \cite[Prop.~49]{Garcia-Ramos2017} and \cite[Lem.~3.5]{Garcia-Ramos2019}.

\item Fix again $x\in A^\Z$ and consider $X=\cO(x):=\{\sigma^nx: n\in\Z\}$. Then $X_\sim=\cO(\tx)$, and this set is relatively compact w.r.t.~$\td_1$ if and only if $x$ is mean almost periodic\,\footnote{Nicolae Strungaru (private communication): For the group $G=\Z$ this can be deduced from \cite{Moody2004}, for more general groups $G$ this is work in progress by Lenz, Spindeler and Strungaru.
A short direct argument is provided in Lemma~\ref{lemma:compact-mean-a-p} below.}, i.e.~if for each $\epsilon>0$ the set $\{i\in\Z: d_1(x,\sigma^ix)<\epsilon\}$ is relatively dense in $\Z$ (which means in this case that the set has bounded gaps), see e.g.~Definition 3.1 in \cite{Lenz2023}.

We stress that the mean almost periodic $x\in A^\Z$ are precisely those elements $x\in A^\Z$, for which all numbers $\cN_\epsilon(\cO(x))$ are finite, such that their scaling behaviour in the limit $\epsilon\to 0$ can be studied.

If $x$ is generic for some ergodic invariant probability measure $\mu$ on $A^\Z$, then  $x$ is mean almost periodic if and only if the dynamical system $(A^\Z,\sigma,\mu)$ has discrete spectrum (in which case all generic points for $\mu$ are mean almost periodic), see \cite[Thm.~3.8]{Lenz2023}.

In this situation denote by $H$ the compact abelian group which is dual to the discrete group of eigenvalues of $(A^\Z,\sigma,\mu)$. In view of the Halmos-von Neumann theorem the system $(A^\Z,\sigma,\mu)$ is measurably isomorphic to an ergodic rotation on $H$ (equipped with Haar measure), see e.g. \cite[Thm.~3.6]{walters}. Moreover, since $\tsigma$ acts isometrically on the $\td_1$-orbit closure $\overline{\cO(\tx)}$, this set  carries the structure of a compact abelian group,
and this group is isomorphic to $H$.\,\footnote{Nicolae Strungaru (private communication): The isomorphy can be deduced from work in progress by Lenz, Spindeler and Strungaru. See also Theorem~\ref{theo:basic} below.} Since genericity of an orbit is preserved under $d_1$-convergence, all orbits $y\in A^\Z$ with $\tilde{y}\in\overline{\cO(\tx)}$ are generic for the measure $\mu$. But note that if $(\sigma,\mu)$ is weakly mixing, then the set $\tpi^{-1}(\overline{\cO(\tx)})$ has $\mu$-measure zero, see Remark~\ref{remark:weak-mixing} below.
\end{enumerate}
We can summarize this discussion: Fix any $x\in A^\Z$.
\begin{equation}\label{eq:summary-product-closure}
\text{\parbox{\textwidth-1.6cm}{\textbf{For minimal $X=\overline{\cO(x)}$:}\\
The $d_1$-covering numbers $\cN_\epsilon(\overline{\cO(x)})$ are finite for all $\epsilon>0$
if and only if $(X,\sigma)$ is mean equicontinuous
 if and only if $(X,\sigma)$ is an isomorphic extension of its MEF. (The closure of $\cO(x)$ is taken in $X$ w.r.t. the product topology.) In this case,
$x$ is generic for the unique invariant measure of $(X,\sigma)$, and
$(\overline{\cO(x)})_\sim$ carries the structure of a compact abelian group isomorphic to the MEF.}}
\end{equation}
\begin{equation} \label{eq:summary-Besicovitch-closure}
\text{\parbox{\textwidth-1.6cm}{\textbf{For $X=\cO(x)$ where $x$ is generic for an ergodic invariant probability $\mu$:}\\
The $\td_1$-covering numbers $\cN_\epsilon(\cO(\tx))$ and $\cN_\epsilon(\overline{\cO(\tx)})$ are finite for all $\epsilon>0$ if and only if the dynamical system $(A^\Z,\sigma,\mu)$ has discrete spectrum. (The closure of $\cO(\tx)$ is taken in $(X_\sim,\td_1)$.) In this case, $\overline{\cO(\tx)}$ carries the structure of a compact abelian group, and the system $(A^\Z,\sigma,\mu)$ is measurably isomorphic  to an ergodic rotation on $\overline{\cO(\tx)}$.}}
\end{equation}

\begin{remark}\label{remark:minimal-case}
Consider some $x\in A^\Z$ for which
$(X,\sigma)=(\overline{\cO(x)},\sigma)$ is a minimal dynamical system, where the closure is taken in the product topology.
Suppose now that $X$ has finite covering numbers $\cN_\epsilon(X)$. In view of item a) above it follows that $(X,\sigma)$ is mean equicontinuous, in particular $(X_\sim,\td_1)$ is a compact metric space on which the shift $\tsigma$ acts minimally. It follows that $\cO(\tx)$ is dense in $X_\sim$, so that
$\overline{\cO(\tx)}=X_\sim=(\overline{\cO(x)})_\sim$. In particular,
\begin{equation*}
\cN_\epsilon(\cO(x))=\cN_\epsilon(\cO(\tx))=
\cN_\epsilon(\overline{\cO(\tx)})
=\cN_\epsilon((\overline{\cO(x)})_\sim)=
\cN_\epsilon(\overline{\cO(x)})
\end{equation*}
for all $\epsilon>0$. Hence, for such $x$, the approach sketched in b) covers the situation sketched in a).
\end{remark}

\subsection{Besicovitch covering numbers as block code invariants}\label{subsec:sliding-block-code}

Suppose that $y\in A^\Z$ can be obtained from $x\in A^\Z$ by applying a block code $\cC$ of length $M$, i.e. $y=F_\cC(x)$ for the factor map $F_\cC:\overline{\cO(x)}\to\overline{\cO(y)}$ determined by $\cC$.\,\footnote{\label{foot:BC} We recall: A block code $\cC$ of length $M=2m+1$ is a map $\cC:A^M\to A$. It defines a continuous map $F_\cC:A^\Z\to A^\Z$, $(F_\cC(v))_i=\cC(v_{i-m}\dots v_{i+m})$. Conversely, for each continuous map $F:\overline{\cO(x)}\to\overline{\cO(y)}$ which commutes with the shift there is a block code $\cC$ such that $F$ is the restriction of $F_\cC$ to $\overline{\cO(x))}$ (Hedlund's theorem \cite{Hedlund1969}).}
It is easy to see that $d_1(F_\cC u,F_\cC v)\le M\cdot d_1(u,v)$ for all $u,v\in\overline{\cO(x)}$,
so that one can define
\begin{equation*}
\tilde{F}_\cC:(\overline{\cO(x)})_\sim\to (\overline{\cO(x)})_\sim,\; \tilde{u}\mapsto\widetilde{F_\cC u},
\end{equation*}
which is $M$-Lipschitz. In particular, $\tilde{F}_\cC$ can be uniquely extended to a $M$-Lipschitz map on $\overline{(\overline{\cO(x)})_\sim}$ and then restricted to  a $M$-Lipschitz map from $\overline{\cO(\tx)}$ onto $\overline{\cO(\tilde{y})}$.
It follows that $\cN_{M\epsilon}(\overline{\cO(\tilde{y})})\le \cN_{\epsilon}(\overline{\cO(\tx)})$ and hence $\cN_{M\epsilon}(\cO(\tilde{y}))\le \cN_{\epsilon}(\cO(\tx))$.
If $x$ and $y$ are \emph{block code equivalent}, i.e.~if they can be mutually obtained from each other by sliding block codes of length at most $M$ \footnote{Note that $x$ and $y$ are block code equivalent if and only if their orbit closures are isomorphic subshifts and there is an isomorphism sending $x$ to $y$.}, then
\begin{equation}\label{eq:covering-number-comparison}
\cN_{M\epsilon}(\cO(\tilde{y}))\le \cN_{\epsilon}(\cO(\tx))\le \cN_{M^{-1}\epsilon}(\cO(\tilde{y}))\quad\text{for all $\epsilon>0$.}
\end{equation}
This motivates to define the following equivalence relation on the set of all monotonically decreasing functions from $(0,\infty)$ to $\N$:
\begin{equation*}
F\approx G\quad\text{if and only if}\quad\exists M>0\ \forall \epsilon>0:\ F(M\epsilon)\le G(\epsilon)\le F(M^{-1}\epsilon).
\end{equation*}
Now we can reformulate observation~\eqref{eq:covering-number-comparison} by saying:
\begin{equation}\label{eq:approx-equivalence}
\text{\parbox{\textwidth-1.6cm}{The $\approx$-equivalence class
$[\cN_\bullet(\cO(x))]_\approx=[\cN_\bullet(\cO(\tx))]_\approx$ of a sequence $x\in A^\Z$ is an invariant for block code equivalence.
}}
\end{equation}
Following \cite{FGJ-2016, FGJK-2021}, we call the
polynomial growth rate of these functions of $\epsilon$ the \emph{amorphic complexity} of $\cO(x)$, more precisely
\begin{equation}\label{eq:ac}
\ac(\cO(x))=\alpha\in [0,\infty],\;\text{ if }\;
\cN_\epsilon(\cO(x))\cdot\epsilon^t
\to
\begin{cases}
0&\text{ for }t>\alpha\\
+\infty&\text{ for }t<\alpha
\end{cases}
\quad\text{ as }\epsilon\to0.
\end{equation}
Quite obviously, $\ac(\cO(x))$ is an invariant for block code equivalence. It is nothing but the Minkowski dimension of $\cO(\tx)$ w.r.t. the Besicovitch metric.
Therefore, just as in \cite{Kloeckner2012,Helfter2022}, one can study other scaling regimes of $\cN_\epsilon(\cO(x))$ and determine the corresponding critical exponents, see Subsection~\ref{subsec:Erdos-examples} for an example.

{Beyond the limited setting of $\Z$-actions studied in the present paper, Baake et al.~\cite{BGG2025} determined precisely the amorphic complexity (which they call \emph{orbit separation dimension}) for quite a few spaces of self-similar aperiodic tilings of $\R^d$ that are generated by a primitive inflation rule, by reducing the problem to determining Lyapunov exponents of associated graph-directed iterated function systems.}

\begin{remark}[Codes whose length has finite moments of all orders]\quad\\
Critical exponents like the amorphic complexity or the exponent for power exponential growth used in Example~\ref{ex:square-free} (more generally, for all \emph{scalings} in the sense of \cite[Def.~2.1]{Helfter2022}) are not only invariants for (bounded length) block code equivalence, but they are also invariant under equivalence by block codes $\cC$ whose length has finite moments of all orders.
In this case the associated map $\tilde F_\cC$ is $\alpha$-Hölder for each exponent $\alpha<1$, and this suffices for the claimed invariance.
\end{remark}

\subsection{Plan of the paper}

The aim of this work is to extend the applicability of the Besicovitch (pseudo-)metric  beyond the class of mean equicontinuous systems to individual mean almost periodic orbits, to  develop methods of approximating the amorphic complexity of such orbits and to exhibit applications of the amorphic complexity as a block code invariant that may be stronger than other known invariants  in some classes of examples.

In Section~\ref{sec:Besicovitch space} we provide some general facts on dynamics on  spaces $X_\sim=\tpi(X)$ where $X$ is a shift invariant subset of $A^\Z$. The emphasis lies on characterizing cases when $\overline{X_\sim}$ is compact, see Theorem~\ref{theo:basic} for a combination of results from the literature with some new aspects. Given an ergodic shift-invariant measure $\mu$, Theorem~\ref{theo:precisions} contains in particular a new description of the set of points which are $\mu$-generic and Besicovitch almost periodic.\footnote{Vershik's paper \cite{Vershik2012} contains a number of concepts and results which are close to the ones from Section~\ref{sec:Besicovitch space} - but seem different nontheless. A closer look at these connections should be fruitful to study the case $\overline{D}_1(\mu)>0$, see Proposition~\ref{proposition:R-D1}.}

As applications, we focus on two classes of examples where $x\in\{0,1\}^\Z$ is the indicator function of a weak model set with Borel window: $\cB$-free numbers in Section~\ref{sec:B-free} and sequences $x\in\{0,1\}^\Z$ generated by a golden mean rotation in Section~\ref{sec:rotation}.

Let $x=\eta_\cB$ be the indicator function of a set of $\cB$-free numbers. If we follow approach \eqref{eq:summary-product-closure}, we must restrict to $\eta_\cB$ which are of Toeplitz type, and even more to the uniquely ergodic ones in this class\,\footnote{See \cite[Cor.~1.4]{BKKL2015} and \cite[Thm.~B]{Dymek2023} for the fact that minimality of the $\cB$-free subshift and the Toeplitz property of the sequence $\eta_\cB$ are equivalent, and \cite{Keller2022} for the observation that not all these systems are uniquely ergodic, although there are uniquely ergodic ones among the irregular Toeplitz examples.}.
In Subsection~\ref{subsec:Toeplitz-examples} we present a parametrized family of such shifts $X_{\eta_\cB}:=\overline{\cO(\eta_\cB)}$ with mutually distinct amorphic complexities. Hence these shifts are pairwise not block code equivalent (in particular non-isomorphic), although they all have isomorphic MEFs, trivial automorphisms groups and (of course) entropy zero.

Approach \eqref{eq:summary-Besicovitch-closure}, in contrast, allows to treat all \emph{Besicovitch sets} $\cB$, i.e.~all sets $\cB\subseteq\N$ such that $\eta_\cB$ is generic for an invariant probability measure $\mu$ - the so-called Mirsky measure of $X_{\eta_\cB}$ that has always discrete spectrum \cite[Thm.~2(a)]{KR2015}.
If $\cB$ is taut\,\footnote{Tautness is a mild regularity property of $\cB$, see Section~\ref{sec:B-free}.},
$(X_{\eta_\cB},\sigma,\mu)$  is actually measurably isomorphic to the rotation by $1$ on the odometer group $G_\cB$ determined by $\cB$ \cite[Thm.~F]{BKKL2015}.
For the classical square-free shift we show that $\eta_\cB$ has infinite amorphic complexity, but critical value $\alpha=1$ on the \emph{power exponential scale} $\epsilon\mapsto\exp(-\epsilon^{-\alpha})$.

In Section~\ref{sec:rotation} we present a parametrized family of weak model sets with (non-compact) Borel windows (based on the golden rotation) such that, for each fixed parameter $s>1$, Lebesgue-a.e. $h\in\T^1$ generates a sequence $x\in\{0,1\}^\Z$ that is generic for an invariant ergodic measure $\mu_s$ with full topological support and for which the system $(\{0,1\}^\Z,\sigma,\mu)$ is isomorphic to the underlying rotation (which is the same for all parameters $s$), but has amorphic complexity $\frac{s}{s-1}$. Hence, for two different parameters, almost all $h\in\T^1$ generate non block code equivalent sequences $x$ with identical orbit closures and identical spectra.

Some proofs are collected in Section~\ref{sec:proofs}.

\section{Invariant measures on Besicovitch space}\label{sec:Besicovitch space}

\subsection{A decomposition of $A^\Z_\sim$ and almost periodicity}

For $x,y\in A^\Z$ let $\tx=\tpi x$ and $\ty=\tpi y$. Define
\begin{equation*}
\begin{split}
D_1(x,y)
:=&
\inf\{\td_1(\tilde u,\tilde v): \tilde u\in\overline{\cO(\tx)}, \tilde v\in\overline{\cO(\tilde y)}\}\\
=&
\inf\{\td_1(\tsigma^m\tx,\tsigma^n\tilde y): m,n\in\Z\}\\
=&
\inf\{\td_1(\tx,\tsigma^n\tilde y): n\in\Z\}
=
\inf\{d_1(x,\sigma^n y): n\in\Z\}
\end{split}
\end{equation*}
where the first identity follows because the closures are taken w.r.t.~$\td_1$, and the second one because $\tsigma$ leaves $\td_1$ invariant. Observe that $D_1$ is Borel measurable and $\sigma$-invariant in both variables separately. Moreover, $D_1$ is a pseudo-metric\,\footnote{Indeed: Let $x,y,z\in A^\Z$ and $\delta>0$. There are $i,j\in\Z$ such that $d_1(x,\sigma^i y)<D_1(x,y)+\delta$ and $d_1(y,\sigma^j z)<D_1(y,z)+\delta$. Hence $D_1(x,z)\le d_1(x,\sigma^{i+j} z)\le d_1(x,\sigma^i y)+d_1(\sigma^i y,\sigma^i(\sigma^j z))<D_1(x,y)+D_1(y,z)+2\delta$.}.

\begin{lemma}\label{lemma:dichotomy}
Let $x,y\in A^\Z$. Then
\begin{compactenum}[a)]
\item $D_1(x,y)=0$\; $\Leftrightarrow$\; $\overline{\cO(\tx)}=\overline{\cO(\tilde y)}$.
\item $D_1(x,y)>0$\; $\Leftrightarrow$\; $\overline{\cO(\tx)}\cap\overline{\cO(\tilde y)}=\emptyset$.
\item $\{\overline{\cO(\tx)}:x\in A^\Z\}$ is a partition of $A^\Z_\sim$ into closed, $\tAb$-measurable, $\tsigma$-invariant sets.
\end{compactenum}
\end{lemma}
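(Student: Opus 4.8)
\textbf{Proof plan for Lemma~\ref{lemma:dichotomy}.}

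The plan is to prove parts a) and b) first, since part c) will follow almost immediately from them together with the measurability and invariance facts already recorded in the excerpt. The key observation is that $\td_1$ is a genuine metric on $A^\Z_\sim$ and that $\tsigma$ acts on it as an isometry; thus $\overline{\cO(\tx)}$ is a closed, $\tsigma$-invariant set, and the quantity $D_1(x,y)$ is precisely the $\td_1$-distance between the two closed sets $\overline{\cO(\tx)}$ and $\overline{\cO(\ty)}$. For part a), the implication "$\Leftarrow$" is trivial: if the orbit closures coincide, then $\tx\in\overline{\cO(\ty)}$, so $D_1(x,y)=\inf_n\td_1(\tx,\tsigma^n\ty)\le\inf_{\tilde v\in\overline{\cO(\ty)}}\td_1(\tx,\tilde v)=0$. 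For "$\Rightarrow$", suppose $D_1(x,y)=0$. Then there is a sequence $n_k\in\Z$ with $\td_1(\tx,\tsigma^{n_k}\ty)\to 0$, i.e.\ $\tsigma^{n_k}\ty\to\tx$ in $\td_1$; since $\tsigma^{n_k}\ty\in\cO(\ty)\subseteq\overline{\cO(\ty)}$ and this set is $\td_1$-closed, we get $\tx\in\overline{\cO(\ty)}$. Because $\tsigma$ is an isometry, $\overline{\cO(\tx)}=\overline{\tsigma^{\Z}\tx}\subseteq\overline{\cO(\ty)}$. For the reverse inclusion: from $\td_1(\tsigma^{n_k}\ty,\tx)\to0$ and isometry we get $\td_1(\ty,\tsigma^{-n_k}\tx)\to0$, so by the same argument $\ty\in\overline{\cO(\tx)}$ and hence $\overline{\cO(\ty)}\subseteq\overline{\cO(\tx)}$. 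This gives equality.

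For part b), note that $D_1(x,y)>0$ is, by a), equivalent to $\overline{\cO(\tx)}\neq\overline{\cO(\ty)}$, so it suffices to show that two such orbit closures are either equal or disjoint. Suppose $\tilde z\in\overline{\cO(\tx)}\cap\overline{\cO(\ty)}$. Pick any $z\in A^\Z$ with $\tpi z=\tilde z$ (here one should be slightly careful: $\tilde z$ lies in $A^\Z_\sim$, which equals $\tpi(A^\Z)$, so such a $z$ exists). Since $\tilde z\in\overline{\cO(\tx)}$, there are $m_k$ with $\tsigma^{m_k}\tx\to\tilde z$, whence $\tilde z\in\overline{\cO(\tilde z)}$ and, arguing exactly as in part a) (using that $\overline{\cO(\tx)}$ is $\td_1$-closed and $\tsigma$-invariant), $\overline{\cO(\tilde z)}=\overline{\cO(\tx)}$. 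Likewise $\overline{\cO(\tilde z)}=\overline{\cO(\ty)}$, so $\overline{\cO(\tx)}=\overline{\cO(\ty)}$, i.e.\ $D_1(x,y)=0$, proving the contrapositive. Combining with the dichotomy "$D_1(x,y)=0$ or $D_1(x,y)>0$" yields the stated equivalence in b).

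For part c): by a) and b) the sets $\{\overline{\cO(\tx)}:x\in A^\Z\}$ are pairwise either equal or disjoint, and they cover $A^\Z_\sim$ since $\tx\in\overline{\cO(\tx)}$ and every point of $A^\Z_\sim$ is of the form $\tx$; hence they form a partition. Each $\overline{\cO(\tx)}$ is $\td_1$-closed by construction and $\tsigma$-invariant because $\tsigma$ is an isometry permuting the orbit and hence fixing its closure. For $\tAb$-measurability, recall from the excerpt that $\tpi$ is measurable w.r.t.\ the $\sigma$-algebra $\tAb$ generated by open $\td_1$-balls, and that $D_1$ is Borel measurable; one writes $\overline{\cO(\tx)}=\{\tilde v\in A^\Z_\sim: \td_1(\tilde v,\tx)=0 \text{ in the quotient sense}\}$ — more precisely, using a) again, $\overline{\cO(\tx)}$ equals the preimage under $\tpi$ (pushed to $A^\Z_\sim$) of a $D_1$-ball of radius $0$ around $x$, which is $\tAb$-measurable because it can be written as a countable intersection of the open $D_1$-balls of radius $1/n$, each of which is a countable union of open $\td_1$-balls. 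The main obstacle I anticipate is precisely this last measurability point: one must be careful to express $\overline{\cO(\tx)}$ in terms of the balls that generate $\tAb$, since the full Borel $\sigma$-algebra $\tAB$ is strictly larger and a naive "closed sets are Borel" argument does not give $\tAb$-measurability. Everything else is a routine use of the isometry property of $\tsigma$.
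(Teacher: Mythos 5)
Your proposal is correct and follows essentially the same route as the paper: $D_1(x,y)=0$ forces $\tx\in\overline{\cO(\ty)}$ (and symmetrically), a common point of the two orbit closures forces $D_1(x,y)=0$, and $\tAb$-measurability comes from writing $\overline{\cO(\tx)}=\bigcap_{k\in\N}\bigcup_{n\in\Z}B_{\td_1}(\tsigma^n\tx,1/k)$, which is exactly the paper's formula. Your detour in b) through a lift $z$ of a point in the intersection is slightly longer than the paper's one-line remark that a common point gives $D_1(x,y)=0$ directly from the definition of $D_1$ as the distance between the two closures, but it is the same argument in substance.
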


\begin{proof}
It suffices to prove the $\Rightarrow$-implications in a) and b) and the measurability in c).\\
a)\;Suppose that $D_1(x,y)=0$. Because of the symmetry in $x$ and $y$ and the $\tsigma$-invariance of $\overline{\cO(\ty)}$ it suffices to prove that $\tx\in\overline{\cO(\ty)}$, and the latter follows from $D_1(x,y)=0$.\\
b)\; If $\overline{\cO(\tx)}\cap\overline{\cO(\tilde y)}\neq\emptyset$, then $D_1(x,y)=0$ by definition.\\
c)\; Fix $x\in A^\Z$. Then
\begin{equation*}
\overline{\cO(\tx)}
=
\{\ty: y\in A^\Z, D_1(x,y)=0\}
=
\bigcap_{k\in\N}\bigcup_{n\in\Z}\{\ty:y\in A^\Z, \td_1(\ty,\tsigma^n\tx)<1/k\}.
\end{equation*}
\end{proof}

\begin{lemma}\label{lemma:generic}
Let $x,y\in A^\Z$ and $f\in C(A^\Z,\C)$.
\begin{compactenum}[a)]
\item If $f$ depends only on coordinates in $[-N,N)$, then
\begin{equation*}
\limsup_{n\to\infty}\frac{1}{2n+1}\sum_{|k|\le n}|f(\sigma^ky)-f(\sigma^{k}x)|
\le
4N\cdot\|f\|_\infty\cdot d_1(y,x).
\end{equation*}
\item If $D_1(x,y)=0$, then
\begin{equation}\label{eq:generic-1}
\inf_{\ell\in\Z}\limsup_{n\to\infty}\frac{1}{2n+1}\sum_{|k|\le n}|f(\sigma^ky)-f(\sigma^{k+\ell}x)|=0
\end{equation}
and
\begin{equation}\label{eq:generic-2}
\lim_{n\to\infty}\frac{1}{2n+1}\left|\sum_{|k|\le n}(f(\sigma^ky)-f(\sigma^kx))\right|=0 .
\end{equation}
\end{compactenum}
\end{lemma}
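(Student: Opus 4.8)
The plan is to prove part (a) by an elementary double-counting estimate and then to deduce both displays in part (b) from (a) by approximating $f$ uniformly by functions depending on only finitely many coordinates; we may and do assume $\|f\|_\infty>0$. For (a): since $f$ is determined by the $2N$ coordinates with indices in $[-N,N)$, one has $f(\sigma^k y)\ne f(\sigma^k x)$ only when $x_j\ne y_j$ for some $j\in[k-N,k+N)$, so $|f(\sigma^k y)-f(\sigma^k x)|\le 2\|f\|_\infty\,\1\{\exists\,j\in[k-N,k+N):x_j\ne y_j\}$. Summing over $|k|\le n$ and interchanging the two summations, each $j$ with $x_j\ne y_j$ is counted for at most $2N$ values of $k$ and only indices with $|j|\le n+N$ contribute, whence $\sum_{|k|\le n}|f(\sigma^k y)-f(\sigma^k x)|\le 4N\|f\|_\infty\,\#\{|j|\le n+N:x_j\ne y_j\}$. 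Dividing by $2n+1$ and taking $\limsup_n$, the factor $\tfrac{2(n+N)+1}{2n+1}\to1$ and a cofinal change of index leave the $\limsup$ of the right-hand side equal to $4N\|f\|_\infty\,d_1(x,y)=4N\|f\|_\infty\,d_1(y,x)$, which is the asserted inequality.

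For the first display in (b), I would fix $\epsilon>0$ and, using uniform continuity of $f$ on the compact space $A^\Z$, choose $N$ and $f_N\in C(A^\Z,\C)$ depending only on the coordinates in $[-N,N)$ with $\|f-f_N\|_\infty\le\epsilon$ and $\|f_N\|_\infty\le\|f\|_\infty$. Then $|f(\sigma^k y)-f(\sigma^{k+\ell}x)|\le|f_N(\sigma^k y)-f_N(\sigma^{k+\ell}x)|+2\epsilon$ for every $k,\ell$, and applying (a) to $f_N$ with the pair $(y,\sigma^\ell x)$ (so that $\sigma^k(\sigma^\ell x)=\sigma^{k+\ell}x$) bounds $\limsup_n\tfrac1{2n+1}\sum_{|k|\le n}|f_N(\sigma^k y)-f_N(\sigma^{k+\ell}x)|$ by $4N\|f\|_\infty\,d_1(y,\sigma^\ell x)$. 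The point at which the order of choices matters is the following: $N$ is now fixed, and $D_1(x,y)=0$ says precisely that $\inf_{\ell}d_1(y,\sigma^\ell x)=0$ (by symmetry and shift-invariance of $d_1$), so we may pick $\ell$ with $d_1(y,\sigma^\ell x)<\epsilon/(4N\|f\|_\infty)$. For that $\ell$ the averaged sum of absolute values has $\limsup$ at most $3\epsilon$, and letting $\epsilon\to0$ yields \eqref{eq:generic-1}.

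For the second display in (b), I would derive it from \eqref{eq:generic-1} using $\bigl|\sum_{|k|\le n}\xi_k\bigr|\le\sum_{|k|\le n}|\xi_k|$ together with the observation that, for a fixed $\ell$, the symmetric Birkhoff sums $\sum_{|k|\le n}f(\sigma^{k+\ell}x)$ and $\sum_{|k|\le n}f(\sigma^k x)$ differ by at most $2|\ell|\,\|f\|_\infty$ (the windows $\{-n,\dots,n\}$ and $\{-n+\ell,\dots,n+\ell\}$ overlap in all but $2|\ell|$ indices once $n\ge|\ell|$), which is negligible after division by $2n+1$. Thus, given $\epsilon>0$ and choosing $\ell$ as in \eqref{eq:generic-1}, the $\limsup$ of $\tfrac1{2n+1}\bigl|\sum_{|k|\le n}(f(\sigma^k y)-f(\sigma^k x))\bigr|$ is at most $\epsilon$, so the limit exists and equals $0$.

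I expect the only genuine subtlety to be the quantifier bookkeeping in the proof of \eqref{eq:generic-1}: the ``Lipschitz constant'' $4N\|f\|_\infty$ furnished by (a) blows up as the coordinate approximation is refined, and it is essential that $D_1(x,y)=0$ lets one make $d_1(y,\sigma^\ell x)$ smaller than any prescribed multiple of $1/N$ after $N$ has been chosen; the remaining steps are routine.
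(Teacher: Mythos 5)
Your proposal is correct and follows essentially the same route as the paper: part a) by the elementary window-counting estimate, and part b) by reducing to functions depending on finitely many coordinates (fixing the approximation scale $N$ before choosing $\ell$ via $\inf_\ell d_1(y,\sigma^\ell x)=D_1(x,y)=0$), then obtaining \eqref{eq:generic-2} from \eqref{eq:generic-1} because shifting the symmetric Birkhoff window by a fixed $\ell$ changes the averages only by $O(|\ell|/n)$. The quantifier bookkeeping you flag is exactly the point the paper's terser proof relies on, and your estimates are accurate.
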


\begin{proof}
a)\; This is an elementary estimate.\\
b)\;Assertion \eqref{eq:generic-1} for functions $f$ that depend on finitely many coordinates only follows from a), because $\inf_{\ell\in\Z}d_1(\sigma^\ell x,y)=D_1(x,y)=0$.
It extends to general $f\in C(A^\Z,\C)$, since these functions are $\|\,.\,\|_\infty$-dense in $C(A^\Z,\C)$. This implies assertion \eqref{eq:generic-2}, because
\begin{equation*}
\limsup_{n\to\infty}\frac{1}{2n+1}\left|\sum_{|k|\le n}(f(\sigma^ky)-f(\sigma^kx))\right|
=
\lim_{n\to\infty}\frac{1}{2n+1}\left|\sum_{|k|\le n}(f(\sigma^ky)-f(\sigma^{k+\ell}x))\right|
\end{equation*}
for every $\ell\in\Z$.
\end{proof}

\begin{definition}\label{defi:almost periodic}
\begin{enumerate}[a)]
\item
A point $x\in A^\Z$ is \emph{mean almost periodic}, if, for every $\epsilon>0$, the set $\{k\in\Z: d_1(x,\sigma^kx)<\epsilon\}$ has bounded gaps, see e.g.~\cite[Def.~3.1]{Lenz2023}. This is the same as saying that $\tpi x$ is almost periodic under $\tsigma$.
(Observe that in \cite{Lenz2023} our pseudo-metric $d_1$ is denoted by $D$.)
\item
A point $x\in A^\Z$ is \emph{Besicovitch almost periodic} if for each $f\in C(A^\Z,\C)$ and each $\epsilon>0$ there is a finite linear
combination~$\Xi_\epsilon=\sum_{j\in J}c_j\xi_j$
of characters $\xi_j:\Z\to\C$ such that
(see e.g.~\cite[Def.~4.1]{Lenz2023})
$$
\limsup_{n\to\infty}\frac{1}{2n+1}\sum_{|k|\le n}|f(\sigma^kx)-\Xi_\epsilon(k)|<\epsilon.
$$
\end{enumerate}
\end{definition}

\begin{lemma}\label{lemma:D_1-Besicovitch}
Let $x,y\in A^\Z$ and suppose that $D_1(x,y)=0$. Then $x$ is mean (resp.~Besicovitch) almost periodic if and only $y$ is mean (resp.~Besicovitch) almost periodic.
\end{lemma}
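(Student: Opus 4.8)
The plan is to exploit the symmetry in the hypothesis $D_1(x,y)=0$ together with Lemma~\ref{lemma:dichotomy}a), which gives $\overline{\cO(\tx)}=\overline{\cO(\ty)}$, and to translate both notions of almost periodicity into intrinsic properties of this common closed $\tsigma$-invariant set so that the equivalence becomes immediate. For \emph{mean} almost periodicity this is almost nothing: by Definition~\ref{defi:almost periodic}a), $x$ is mean almost periodic iff $\tpi x=\tx$ is almost periodic under $\tsigma$, i.e.~iff $\tx$ is a minimal (=almost periodic) point of the isometry $\tsigma$. But $\overline{\cO(\tx)}=\overline{\cO(\ty)}$, so $\tx$ and $\ty$ lie in the same orbit closure; since $\tsigma$ acts isometrically, $\overline{\cO(\tx)}$ is minimal iff \emph{every} point of it is almost periodic iff \emph{some} point of it is almost periodic (for isometries on a metric space, orbit closures are uniformly recurrent in a uniform way — if $\tx$ is almost periodic then $\overline{\cO(\tx)}$ is a minimal set containing $\ty$, hence $\ty$ is almost periodic too, and vice versa). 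So the mean a.p.~case follows purely from Lemma~\ref{lemma:dichotomy}a).

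For the \emph{Besicovitch} almost periodic case I would use Lemma~\ref{lemma:generic}b), which is exactly the tool that converts $D_1(x,y)=0$ into a comparison of Birkhoff-type averages. Fix $f\in C(A^\Z,\C)$ and $\epsilon>0$, and suppose $x$ is Besicovitch a.p.; pick a trigonometric polynomial $\Xi=\sum_{j\in J}c_j\xi_j$ with $\limsup_n\frac{1}{2n+1}\sum_{|k|\le n}|f(\sigma^kx)-\Xi(k)|<\epsilon/2$. The characters $\xi_j:\Z\to\C$ have the form $\xi_j(k)=\lambda_j^k$ for unimodular $\lambda_j$, so for any $\ell\in\Z$ the shifted polynomial $\Xi_\ell(k):=\Xi(k+\ell)=\sum_j c_j\lambda_j^\ell\,\xi_j(k)$ is again a trigonometric polynomial of the same type (only the coefficients change). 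By \eqref{eq:generic-1} of Lemma~\ref{lemma:generic}b) applied to $f$ there is $\ell\in\Z$ with $\limsup_n\frac{1}{2n+1}\sum_{|k|\le n}|f(\sigma^ky)-f(\sigma^{k+\ell}x)|<\epsilon/2$. Combining,
\begin{equation*}
\limsup_{n\to\infty}\frac{1}{2n+1}\sum_{|k|\le n}|f(\sigma^ky)-\Xi_\ell(k)|
\le
\limsup_{n\to\infty}\frac{1}{2n+1}\sum_{|k|\le n}|f(\sigma^ky)-f(\sigma^{k+\ell}x)|
+\limsup_{n\to\infty}\frac{1}{2n+1}\sum_{|k|\le n}|f(\sigma^{k+\ell}x)-\Xi(k+\ell)|<\epsilon,
\end{equation*}
where the last summand is unchanged under the index shift $k\mapsto k+\ell$. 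Hence $\Xi_\ell$ witnesses Besicovitch almost periodicity of $y$ for $(f,\epsilon)$. Since $f$ and $\epsilon$ were arbitrary, $y$ is Besicovitch a.p.; the reverse implication follows by symmetry of the hypothesis $D_1(x,y)=0$.

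The only genuinely delicate point is the book-keeping with shifted characters: one must observe that a finite linear combination of characters of $\Z$ stays such a combination after translating the argument by $\ell$, so that the class of admissible $\Xi$'s is translation-invariant; this is what lets the $\ell$ produced by Lemma~\ref{lemma:generic}b) be absorbed. Everything else is a two-term triangle inequality on $\limsup$ of averages plus the trivial remark that these averages are invariant under reindexing $k\mapsto k+\ell$. I would present the mean a.p.~case in one sentence (citing Lemma~\ref{lemma:dichotomy}a) and minimality of isometric orbit closures) and devote the short displayed computation above to the Besicovitch case.
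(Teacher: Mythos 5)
Your proposal is correct and follows essentially the paper's own route: the Besicovitch case rests on Lemma~\ref{lemma:generic}b) and the same triangle-inequality estimate, the only (harmless) difference being that you absorb the shift into the trigonometric polynomial via a single $\ell$, replacing $\Xi$ by $\Xi_\ell=\sum_{j}c_j\xi_j(\ell)\xi_j$, whereas the paper takes a sequence $(\ell_i)$ with a convergent subsequence of character values and uses the limit polynomial $\sum_j c_ja_j\xi_j$ — your version even spares that subsequence step. The mean case, which the paper dismisses as nearly trivial, you handle via minimality of the orbit closure under the isometry $\tsigma$; this works (almost periodicity plus isometry gives a totally bounded orbit, hence a compact minimal closure in the complete space $A^\Z_\sim$, all of whose points are almost periodic), though a direct transfer of syndetic return times from $\tx$ to $\ty$ by the triangle inequality would be even shorter.
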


\begin{proof}
The ``mean assertion'' is nearly trivial. We turn to the ``Besicovitch assertion'':
Suppose $x$ is Besicovitch almost periodic, $f\in C(A^\Z,\C)$, and $\Xi_\epsilon=\sum_{j\in J}c_j\xi_j$ as in Definition~\ref{defi:almost periodic}b). As $D_1(x,y)=0$, there are a sequence $(\ell_i)_i$ of integers and some $a_j\in\C$ $(j\in J)$ such that
$$
\Delta_i:=\limsup_{n\to\infty}\frac{1}{2n+1}\sum_{|k|\le n}|f(\sigma^ky)-f(\sigma^{k+\ell_i}x)|\to0\quad\text{as }i\to\infty
$$
by Lemma~\ref{lemma:generic}b)
and $\lim_{i\to\infty}\xi_j(\ell_i)= a_j$ $(j\in J)$. Let $\Xi=\sum_{j\in J}c_ja_j\xi_j$. Then
\begin{equation*}
\begin{split}
&\limsup_{n\to\infty}\frac{1}{2n+1}\sum_{|k|\le n}|f(\sigma^ky)-\Xi(k)|\\
&\le
\Delta_i+
\limsup_{n\to\infty}\frac{1}{2n+1}\sum_{|k|\le n}|f(\sigma^{k+\ell_i}x)-\Xi_\epsilon(k+\ell_i)|\\
&\hspace*{12mm}+
\limsup_{n\to\infty}\frac{1}{2n+1}\sum_{|k|\le n}|\Xi_\epsilon(k+\ell_i)-\Xi(k)|\\
&\le
\Delta_i+\epsilon+\limsup_{n\to\infty}\frac{1}{2n+1}\sum_{|k|\le n}\sum_{j\in J}|c_j(\xi_j(\ell_i)-a_j)\xi_j(k)|\\
&\to\epsilon\quad\text{as }i\to\infty\ ,
\end{split}
\end{equation*}
so that also $y$ is Besicovitch almost periodic.
\end{proof}

\subsection{Another look at Besicovitch almost periodicity}

The next three lemmas are kind of folklore knowledge, but since we need some precise details and since $(A^\Z_\sim,\td_1)$ is not even locally compact, we provide proofs. Observe also that a subset of a complete metric space is relatively compact if and only if it is totally bounded.

\begin{lemma}\label{lemma:relatively-compact}
Suppose $(M,d)$ is a metric space, $T:M\to M$ an isometry, and $x\in M$. If $\overline{\cO(x)}$ admits a $T$-invariant Borel probability measure $\mu$, then $\cO(x)$ is totally bounded, and
the maximal cardinality of a $2\epsilon$-separated set $R_\epsilon\subseteq\cO(x)$ is bounded by $\mu(B_d(x,\epsilon))^{-1}<\infty$.
\end{lemma}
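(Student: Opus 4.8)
The plan is to exploit the isometry property of $T$ together with the invariance of $\mu$ in a pigeonhole argument. First I would fix $\epsilon>0$ and let $R_\epsilon=\{T^{n_1}x,\dots,T^{n_k}x\}\subseteq\cO(x)$ be any $2\epsilon$-separated subset of the orbit, say with $n_1<\dots<n_k$ distinct integers. The key observation is that for $i\neq j$ the open balls $B_d(T^{n_i}x,\epsilon)$ are pairwise disjoint: if $z$ lay in $B_d(T^{n_i}x,\epsilon)\cap B_d(T^{n_j}x,\epsilon)$, then $d(T^{n_i}x,T^{n_j}x)<2\epsilon$ by the triangle inequality, contradicting $2\epsilon$-separation.

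Next I would use the isometry to transport these balls back to $x$. Since $T$ is an isometry, so is $T^{-n_i}$, hence $T^{-n_i}B_d(T^{n_i}x,\epsilon)=B_d(x,\epsilon)$ for each $i$. By $T$-invariance of $\mu$ (equivalently $T^{-1}$-invariance, since $T$ is a bijective isometry onto its image — or one simply uses that $\mu$ is $T$-invariant and $\mu=\mu\circ T^{-1}$), we get $\mu(B_d(T^{n_i}x,\epsilon))=\mu(B_d(x,\epsilon))$ for every $i$. Therefore, since the $k$ balls $B_d(T^{n_i}x,\epsilon)$, $i=1,\dots,k$, are pairwise disjoint subsets of $\overline{\cO(x)}$,
\begin{equation*}
1=\mu(\overline{\cO(x)})\ge\sum_{i=1}^k\mu(B_d(T^{n_i}x,\epsilon))=k\cdot\mu(B_d(x,\epsilon)),
\end{equation*}
which forces $\mu(B_d(x,\epsilon))>0$ (take $k=1$) and $k\le\mu(B_d(x,\epsilon))^{-1}<\infty$. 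Since $R_\epsilon$ was an arbitrary $2\epsilon$-separated subset of $\cO(x)$, the maximal cardinality of such a set is bounded by $\mu(B_d(x,\epsilon))^{-1}$.

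Finally, to conclude that $\cO(x)$ is totally bounded: a maximal $2\epsilon$-separated set $R_\epsilon\subseteq\cO(x)$ (which exists and is finite by the bound just proved) is automatically $2\epsilon$-spanning, i.e.\ every point of $\cO(x)$ lies within $2\epsilon$ of some point of $R_\epsilon$ — otherwise one could enlarge $R_\epsilon$, contradicting maximality. Hence $\cO(x)$ is covered by finitely many balls of radius $2\epsilon$ for every $\epsilon>0$, so it is totally bounded. I do not anticipate a serious obstacle here; the only point requiring a little care is the measurability of the balls $B_d(\cdot,\epsilon)$ and the legitimacy of applying $\mu$ to them — open balls are Borel, and in our intended application to $(A^\Z_\sim,\td_1)$ the measure $\mu$ lives on the separable subspace $\overline{\cO(\tx)}$ where, as noted in Section~\ref{subsec:intro-Besicovitch}, the trace $\sigma$-algebras $\tAb$ and $\tAB$ coincide, so this causes no difficulty.
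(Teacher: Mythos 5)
Your counting step is essentially the paper's own argument: the open $\epsilon$-balls centred at the points of a $2\epsilon$-separated subset of $\cO(x)$ are pairwise disjoint, they all have the same measure as $B_d(x,\epsilon)$ by the isometry-plus-invariance argument (indeed $T^{-n}B_d(T^nx,\epsilon)=B_d(x,\epsilon)$ as a preimage, exactly because $T$ is isometric), and a maximal separated set is spanning, which gives total boundedness. But there is a genuine gap: you never establish $\mu(B_d(x,\epsilon))>0$, which is part of the assertion (the lemma claims the bound $\mu(B_d(x,\epsilon))^{-1}$ is \emph{finite}) and is indispensable for your conclusion. Your parenthetical ``take $k=1$'' only yields $1\ge\mu(B_d(x,\epsilon))$, which is vacuous. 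If $\mu(B_d(x,\epsilon))$ were $0$, the inequality $k\le\mu(B_d(x,\epsilon))^{-1}$ would impose no bound at all, a maximal $2\epsilon$-separated subset of $\cO(x)$ (which exists by Zorn's lemma) could be infinite, and total boundedness would not follow. So positivity cannot be extracted from the disjointness estimate; it needs a separate argument.

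The missing ingredient is precisely what the paper proves first: the countably many balls $B_d(T^nx,\epsilon)$ cover $\overline{\cO(x)}$, since every point of the closure lies within $\epsilon$ of some orbit point, and each of these balls has measure equal to $\mu(B_d(x,\epsilon))$. Hence, if this common value were $0$, then $1=\mu(\overline{\cO(x)})\le\sum_{n}\mu(B_d(T^nx,\epsilon))=0$, a contradiction; therefore $\mu(B_d(x,\epsilon))>0$ for every $\epsilon>0$. (An equivalent fix: $\overline{\cO(x)}$ is separable, so the topological support of $\mu$ is nonempty; pick $y$ in the support, choose $T^nx$ with $d(y,T^nx)<\epsilon/2$, and conclude $\mu(B_d(x,\epsilon))=\mu(B_d(T^nx,\epsilon))\ge\mu(B_d(y,\epsilon/2))>0$.) Once this is inserted, the rest of your proof goes through and coincides with the paper's; your closing remarks on measurability of balls are fine and not the issue.
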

\begin{proof}
For all $\epsilon>0$ and $n\in\Z$ we have
$\mu(B_d(T^nx,\epsilon))=\mu(B_d(x,\epsilon))$. Suppose for a contradiction that there exists $\epsilon>0$ such that $\mu(B_d(x,\epsilon))=0$. Then
$1=\mu(\overline{\cO(x)})\le \mu\left(\bigcup_{n\ge 0}B_d(T^nx,\epsilon) \right)
=0$. Hence
$\mu(B_d(x,\epsilon))>0$ for all $\epsilon>0$.

Suppose now that $R_\epsilon\subseteq \N$ is such that the points $T^nx\ (n\in R_\epsilon)$ are $2\epsilon$-separated. Then
\begin{equation*}
1\ge \mu\left(\bigcup_{n\in R_\epsilon}B_d(T^nx,\epsilon)\right)
=
\sum_{n\in R_\epsilon}\mu(B_d(T^nx,\epsilon))
=
\card R_\epsilon\cdot\mu(B_d(x,\epsilon)),
\end{equation*}
so that $\card R_\epsilon\le{(\mu(B_d(x,\epsilon)))^{-1}}<\infty$.
Consider such a set $R_\epsilon$ with maximal cardinality.
Then $\cO(x)\subseteq\bigcup_{n\in R_\epsilon}B_d(T^nx,2\epsilon)$.
\end{proof}

The following fact is well known, see e.g.~the discussion in \cite[Sec.~5.1]{Vershik2012}.
\begin{lemma}\label{lemma:compact-mean-a-p}
A point $x\in A^\Z$ is mean almost periodic if and only if $\cO(\tpi x)$ is relatively compact (w.r.t.~$\td_1$).
In this case, $(\overline{\cO(\tpi x)},\td_1,\tsigma)$ is an invertible, compact, isometric dynamical system with a unique invariant probability measure, call it~$\tm$.
\end{lemma}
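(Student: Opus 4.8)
The plan is to run everything on the complete metric space $(A^\Z_\sim,\td_1)$, using that $\tsigma$ acts on it as a \emph{bijective} isometry — it is onto because $\sigma$ is invertible on $A^\Z$ and $d_1$ is $\sigma$-invariant, so that $\tsigma^{-1}\tu=\widetilde{\sigma^{-1}u}$ is well defined — and that, by completeness, ``relatively compact'' is the same as ``totally bounded''. Write $\tx=\tpi x$. For the implication \emph{mean almost periodic $\Rightarrow$ $\cO(\tx)$ relatively compact}: given $\epsilon>0$, mean almost periodicity provides $L\in\N$ such that every block of $L$ consecutive integers meets $R_\epsilon:=\{k\in\Z:\td_1(\tsigma^k\tx,\tx)<\epsilon\}$ (note $\td_1(\tsigma^k\tx,\tx)=d_1(\sigma^kx,x)$); for any $n\in\Z$ pick $p\in R_\epsilon$ with $j:=n-p\in\{0,\dots,L-1\}$, and isometry-invariance of $\td_1$ gives $\td_1(\tsigma^n\tx,\tsigma^{j}\tx)=\td_1(\tsigma^{p}\tx,\tx)<\epsilon$; thus $\cO(\tx)$ is covered by the $\epsilon$-balls around $\tsigma^0\tx,\dots,\tsigma^{L-1}\tx$, hence totally bounded.

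For the reverse implication and the structural statements, set $K:=\overline{\cO(\tx)}$, which is now compact. Since $\tsigma$ maps the two-sided orbit $\cO(\tx)$ onto itself, it restricts to an isometric homeomorphism of $K$, so $(K,\td_1,\tsigma)$ is an invertible, compact, isometric dynamical system. I would then work with $G:=\overline{\{\tsigma^n:n\in\Z\}}$ taken inside the isometry group of $K$ with the topology of uniform convergence: by Arzelà--Ascoli that isometry group, hence the closed subgroup $G$, is compact, so $G$ is a compact abelian group. The orbit map $\gamma\mapsto\gamma\tx$, $G\to K$, is continuous with closed image containing $\cO(\tx)$, whence $G\tx=K$; and since $\gamma'\gamma^{-1}(\gamma\tx)=\gamma'\tx$, the $G$-action on $K$ is transitive. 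Transitivity gives minimality of $(K,\tsigma)$: for $y=\gamma\tx$ one has $\overline{\{\tsigma^n\gamma:n\in\Z\}}=G$, so $\overline{\cO(y)}=G\tx=K$. In a minimal compact system the return times $\{n:\tsigma^n\tx\in U\}$ are syndetic for every open $U\ni\tx$, which says exactly that $x$ is mean almost periodic; this closes the equivalence.

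It remains to establish unique ergodicity of $(K,\tsigma)$. Existence of a $\tsigma$-invariant Borel probability $\nu$ on the compact space $K$ is Krylov--Bogolyubov. For uniqueness, I would show any such $\nu$ is automatically $G$-invariant: for $\gamma\in G$ choose $n_i$ with $\tsigma^{n_i}\to\gamma$ uniformly, so $f\circ\tsigma^{n_i}\to f\circ\gamma$ uniformly for each $f\in C(K)$, whence $\int f\circ\gamma\,d\nu=\lim_i\int f\circ\tsigma^{n_i}\,d\nu=\int f\,d\nu$. Since $K=G/\mathrm{Stab}_G(\tx)$ is a homogeneous space of the compact group $G$, it carries a unique $G$-invariant probability measure, namely the image $\tm$ of normalized Haar measure of $G$ under the orbit map; hence $\nu=\tm$.

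I expect the unique-ergodicity part to be the main obstacle. The two equivalences are short once isometry-invariance of $\td_1$ is used, and ``invertible, compact, isometric'' is immediate; but uniqueness of the invariant measure genuinely rests on the compact-group picture of the orbit closure — compactness of the isometry group via Arzelà--Ascoli, the passage from $\tsigma$-invariance to $G$-invariance, and uniqueness of Haar measure on a homogeneous space. A route that avoids explicit group theory is to apply Arzelà--Ascoli directly to the Birkhoff averages $\frac{1}{2n+1}\sum_{|k|\le n}f\circ\tsigma^k$: equicontinuity yields uniformly convergent subsequences whose limit is $\tsigma$-invariant and continuous, hence (by minimality) constant, so the averages converge uniformly to a constant — the classical criterion for unique ergodicity. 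I would present the group-theoretic version, which in addition yields minimality.
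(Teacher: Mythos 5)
Your proof is correct, and since the paper itself does not write out a proof of this lemma (it is stated as folklore with a pointer to Vershik's discussion, the text only promising a ``short direct argument''), you are supplying an argument rather than duplicating one. Your forward direction is exactly the expected covering argument: syndeticity of $\{k\in\Z:d_1(x,\sigma^kx)<\epsilon\}$ plus isometry-invariance of $\td_1$ gives the finite $\epsilon$-net $\tx,\tsigma\tx,\dots,\tsigma^{L-1}\tx$, and completeness of $(A^\Z_\sim,\td_1)$ converts total boundedness into relative compactness. For the converse and the structural claims you pass to the monothetic compact group $G=\overline{\{\tsigma^n|_K:n\in\Z\}}\subseteq\mathrm{Iso}(K)$, $K=\overline{\cO(\tpi x)}$; this is sound provided you make explicit the two standard facts you lean on: an isometric embedding of a compact metric space into itself is automatically onto (so Arzel\`a--Ascoli limits stay in the isometry group and $\mathrm{Iso}(K)$ is a compact topological group, with the closure of the abelian set $\{\tsigma^n\}$ again abelian), and the averaging argument showing that a $G$-invariant probability on the homogeneous space $G/\mathrm{Stab}_G(\tx)\cong K$ must be the push-forward of Haar measure, which is what turns $\tsigma$-invariance plus $G$-invariance into uniqueness of $\tm$. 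This compact-group picture is consistent with the framework the paper invokes elsewhere (it asserts that $\overline{\cO(\tx)}$ carries the structure of a compact abelian group), and it yields minimality and the syndetic return times needed to close the equivalence; the alternative you mention --- minimal plus equicontinuous implies uniquely ergodic via uniformly convergent Birkhoff averages --- would do the same job with less machinery. Note also that Lemma~\ref{lemma:relatively-compact} of the paper offers a different, quantitative route to total boundedness once an invariant measure on $\overline{\cO(\tpi x)}$ is available, but your argument does not need it.
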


\begin{lemma}\label{lemma:compact-ONB}
Suppose that the dynamical system $(A^\Z,\sigma,\mu)$ is ergodic and that
$\cO(\tpi x)$ is relatively compact for some $\mu$-generic point $x\in A^\Z$. Then
\begin{compactenum}[a)]
\item $(A^\Z,\sigma,\mu)$ has discrete spectrum, and
\item all eigenvalues of $(A^\Z,\sigma,\mu)$ are eigenvalues of $(\overline{\cO(\tpi x)},\tsigma,\tm)$.
\end{compactenum}
\end{lemma}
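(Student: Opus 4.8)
The plan is to transfer spectral information from the system $(\overline{\cO(\tpi x)},\tsigma,\tm)$ — which is a compact, invertible, isometric system, hence has discrete spectrum by the classical Halmos--von Neumann theory — to the original system $(A^\Z,\sigma,\mu)$ via the map $\tpi$. The key point is that $\tpi$ intertwines the two dynamics ($\tpi\circ\sigma=\tsigma\circ\tpi$), but it is a priori only measurable w.r.t.\ $\tAb$, and $x$ is merely $\mu$-generic rather than $\mu$-typical, so I cannot simply say ``$\tpi$ is a factor map.'' The substitute for that is Lemma~\ref{lemma:generic}b) together with genericity.

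First I would observe that, since $x$ is generic for $\mu$, Lemma~\ref{lemma:generic}(b), applied along the orbit of $x$, gives that for every $f\in C(A^\Z,\C)$ and every $\tilde u\in\overline{\cO(\tpi x)}$ the Birkhoff averages $\frac{1}{2n+1}\sum_{|k|\le n}f(\sigma^k y)$ converge to $\int f\,d\mu$ for any $y$ with $\tpi y=\tilde u$; i.e.\ every point in $\tpi^{-1}(\overline{\cO(\tpi x)})$ is $\mu$-generic. More importantly, this lets me define, for each continuous $f$, a function $\hat f$ on $\overline{\cO(\tpi x)}$ by averaging: on the dense orbit $\cO(\tpi x)$ set $\hat f(\tsigma^n\tpi x):=\frac{1}{2n+1}\sum\cdots$ — no, better, I would instead use the correlation structure. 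The cleanest route: for $f,g\in C(A^\Z,\C)$ define the correlation $c_{f,g}(n):=\lim_m \frac{1}{2m+1}\sum_{|k|\le m} f(\sigma^{k+n}x)\overline{g(\sigma^k x)}$; by genericity this equals $\int f\circ\sigma^n\cdot\bar g\,d\mu=\langle U_\sigma^n f,g\rangle_{L^2(\mu)}$, where $U_\sigma$ is the Koopman operator. On the other hand, using Lemma~\ref{lemma:generic}(a) one checks $|f(\sigma^k x)-f(\sigma^{k+\ell}x)|$ is small on average when $d_1(x,\sigma^\ell x)$ is small, so $n\mapsto c_{f,g}(n)$ depends on $n$ only through $\tsigma^n\tpi x\in\overline{\cO(\tpi x)}$ in a $\td_1$-uniformly-continuous way, hence extends to a continuous function $C_{f,g}$ on the compact group $\overline{\cO(\tpi x)}$.

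Now I would invoke that $(\overline{\cO(\tpi x)},\tsigma,\tm)$ has discrete spectrum: its Koopman operator $U_{\tsigma}$ on $L^2(\tm)$ has an orthonormal basis of eigenfunctions, which (the system being a compact monothetic group rotation, by Lemma~\ref{lemma:compact-mean-a-p}) are precisely the characters of the group, and $\tm$ is Haar measure. Expanding $C_{f,g}$ in this basis and comparing with $c_{f,g}(n)=\langle U_\sigma^n f,g\rangle_{L^2(\mu)}$, I get that the spectral measure $\rho_{f,g}$ of the pair $(f,g)$ for $U_\sigma$ — defined via $\langle U_\sigma^n f,g\rangle=\int_{\T}e^{2\pi i n\theta}\,d\rho_{f,g}(\theta)$ — is a countable sum of point masses supported on the eigenvalue group of $\overline{\cO(\tpi x)}$. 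Since continuous $f$ are dense in $L^2(\mu)$ (the continuous functions on $A^\Z$ are dense, and genericity makes the embedding $C(A^\Z)\to L^2(\mu)$ have dense range), every spectral measure of $U_\sigma$ is purely atomic with atoms in that group. This gives both conclusions at once: $(A^\Z,\sigma,\mu)$ has purely discrete spectrum (part a)), and each of its eigenvalues lies in the eigenvalue group of $(\overline{\cO(\tpi x)},\tsigma,\tm)$ (part b)).

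The main obstacle I anticipate is making rigorous the step ``$c_{f,g}(n)$ factors continuously through $\overline{\cO(\tpi x)}$'': one has to pass from the $\limsup$ estimate of Lemma~\ref{lemma:generic}(a) along the single orbit of $x$ to genuine $\td_1$-uniform continuity on the whole compact orbit closure, and then extend from the dense set $\{\tsigma^n\tpi x\}$ to all of $\overline{\cO(\tpi x)}$; the non-separability and non-local-compactness of the ambient space $A^\Z_\sim$ mean one must stay inside the compact — hence well-behaved — piece $\overline{\cO(\tpi x)}$ throughout, and be careful that the limit defining $c_{f,g}(n)$ exists (which is exactly where genericity of $x$, not just of $\mu$, is used). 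An alternative, possibly shorter, route to the same end is to build the $L^2(\mu)\to L^2(\tm)$ isometry directly: send $f\in C(A^\Z,\C)$ to the continuous function $\tilde f$ on $\overline{\cO(\tpi x)}$ obtained as above, check $\|\tilde f\|_{L^2(\tm)}=\|f\|_{L^2(\mu)}$ and $\tilde f\circ\tsigma=\widetilde{f\circ\sigma}$ using genericity, extend by density to an isometric $U_\sigma$-$U_{\tsigma}$ intertwiner $L^2(\mu)\hookrightarrow L^2(\tm)$, and then pull back the eigenbasis of $U_{\tsigma}$; surjectivity of the intertwiner is not needed for either claim.
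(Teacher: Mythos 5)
Your main route is correct, but it is genuinely different from the paper's argument. The paper fixes a cylinder set $K$ and uses genericity to show that the map $\Gamma_K:\tsigma^j\tpi x\mapsto 1_K\circ\sigma^j$ is Lipschitz from $(\cO(\tpi x),\td_1)$ into $L^2_\mu$, hence extends to the compact orbit closure; it then forms the averaging operator $F_K:L^1_{\tm}\to L^2_\mu$, $F_K(\psi)=\int\overline{\psi(\tu)}\,\Gamma_K(\tu)\,d\tm(\tu)$, which carries eigenfunctions of $\tsigma$ to eigenfunctions of $\sigma$ with the same eigenvalue, and approximates $1_K=\Gamma_K(\tpi x)$ in $L^2_\mu$ by $F_K(\psi_n)$ with $\psi_n$ a combination of eigenfunctions approximating a point mass at $\tpi x$; this gives a) and b) in one stroke, with no harmonic analysis beyond the discrete spectrum of the compact isometric system. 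You instead work with correlation sequences $c_{f,g}(n)=\langle U_\sigma^nf,g\rangle_{L^2_\mu}$, use Lemma~\ref{lemma:generic}a) plus genericity to show they factor uniformly continuously through $\overline{\cO(\tpi x)}$, and conclude that the spectral measures of $U_\sigma$ are purely atomic with atoms among the eigenvalues of $(\overline{\cO(\tpi x)},\tsigma,\tm)$. Both proofs rest on the same mechanism — averaged quantities, unlike point evaluations, are $d_1$-Lipschitz along the orbit — but yours is spectral-measure-theoretic while the paper's is an explicit construction of approximating eigenfunctions.

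Two caveats. First, the sentence ``expanding $C_{f,g}$ in this basis \dots\ the spectral measure is a countable sum of point masses supported on the eigenvalue group'' is the analytic crux and is asserted rather than argued. To make it rigorous, take $g=f$ so that $\rho_{f,f}$ is a positive measure; approximate $C_{f,f}$ \emph{uniformly} on the compact group (Stone--Weierstrass/Peter--Weyl, not just in $L^2_{\tm}$) by finite character sums; conclude that $n\mapsto\widehat{\rho_{f,f}}(n)=C_{f,f}(\tsigma^n\tpi x)$ is Bohr almost periodic with Bohr spectrum contained in the eigenvalue group $\Lambda$ of $(\overline{\cO(\tpi x)},\tsigma,\tm)$; then invoke Wiener's lemma on atoms together with the fact that a nonnegative Bohr almost periodic sequence with zero mean vanishes identically, to kill the continuous part of $\rho_{f,f}$ and locate its atoms in $\Lambda$; finally pass from continuous $f$ (dense in $L^2_\mu$ for standard reasons — genericity plays no role there) to all of $L^2_\mu$ via the closed subspace spanned by eigenfunctions with eigenvalues in $\Lambda$. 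All of this is classical, but it is where the work sits, and it is noticeably heavier than the paper's $F_K$-computation. Second, your ``alternative, possibly shorter route'' does not work as stated: there is no well-defined, let alone $\td_1$-continuous, scalar-valued $\tilde f$ on $\overline{\cO(\tpi x)}$ with $\tilde f(\tsigma^n\tpi x)=f(\sigma^n x)$, because point evaluations of $f$ do not factor through $d_1$ (two sequences at tiny $d_1$-distance can disagree on every coordinate $f$ sees); only averaged objects do, which is exactly why the paper sends points of the orbit closure to elements of $L^2_\mu$ rather than to numbers. Your first route survives this because correlations are averages; the shortcut does not.
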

\begin{proof}
We are going to show the following technical statement:
\begin{equation}\label{eq:tech-statement}
\text{\parbox{\textwidth-2cm}
{For every $f\in L^2_\mu$ and
every $\epsilon>0$ there are eigenvalues $\xi_1,\dots,\xi_n$ of
$(\overline{\cO(\tpi x)},\tsigma,\tm)$ with the following properties: $\xi_1,\dots,\xi_n$ are also eigenvalues of
$(A^\Z,\sigma,\mu)$ with normalized eigenfunctions $e_{\xi_1},\dots,e_{\xi_n}$, and there are $c_1,\dots,c_n\in\C$ such that
$\|f-\textstyle\sum_{i=1}^nc_ie_{\xi_i}\|_2<\epsilon$.
}}
\end{equation}
Hence the eigenfunctions of $(A^\Z,\sigma,\mu)$ span $L^2_\mu$ (this is a), and  each eigenfunction $f$ of $(A^\Z,\sigma,\mu)$ belongs to the span of those eigenfunctions of $(A^\Z,\sigma,\mu)$ with eigenvalues that are also eigenvalues of $(\overline{\cO(\tpi x)},\tsigma,\tm)$ (this yields b).

It suffices to verify \eqref{eq:tech-statement} for a set of continuous functions $f$ whose linear span is dense in $L^2_\mu$, here for the family of functions $1_K$ where the sets $K\subseteq A^\Z$ are cylinder sets.
We borrow some arguments from \cite[Sec.~5.3]{Moreira-notes}:
Denote $\tX:=\overline{\cO(\tx)}$ where $\tx=\tpi(x)$.
Define $\Gamma_K:\cO(\tx)\to L^2_\mu$, $\Gamma_K(\tsigma^j\tx)=1_K\circ \sigma^j$. Then,
if $1_K$ depends only on coordinates $x_{-m},\dots,x_m$ and $M=2m+1$,
\begin{equation}\label{eq:embedding}
\begin{split}
\|\Gamma_K(\tsigma^i\tx)-\Gamma_K(\tsigma^j\tx)\|_2^2
&=
\mu(\sigma^{-i}K\triangle \sigma^{-j}K)\\
&=
\lim_{n\to\infty}\frac{1}{2n+1}\sum_{|k|\le n}|1_K(\sigma^{i+k}x)-1_K(\sigma^{j+k}x)|\\
&\le
M\cdot\td_1(\tsigma^i\tx,\tsigma^{j}\tx),
\end{split}
\end{equation}
where we used the $\mu$-genericity of $\tx$ for the second equality.
Hence $\Gamma_K$ extends uniquely to a continuous embedding $\Gamma_K:\tX\to L^2_\mu$, and $\|\Gamma_K(\tu)\|_2=\|1_K\|_2\le 1$ for all $\tu\in\tX$.

Recall the measure $\tm$ on $\tX$ from Lemma~\ref{lemma:compact-mean-a-p}. Define the linear map
\begin{equation*}
F_K:L^1_{\tilde m}\to L^2_\mu,\quad
F_K(\psi)=\int\overline{\psi(\tu)}\cdot\Gamma_K(\tu)\,d\tm(\tu),
\end{equation*}
and observe that $\|F_K\|\le 1$.
Pick $0\le\phi_n\in L^\infty_{\tilde m}$ such that $\supp(\phi_n)\subseteq B_{\td_1}(\tx,n^{-2})$ and $\int\phi_n\,d\tm=1$, and a linear combination $\psi_n$ of eigenfunctions for $\tsigma$ such that $\|\phi_n-\psi_n\|_{L^1_{\tilde m}}<n^{-1}$. Then, observing that $\Gamma_K(\tx)=1_K$,
\begin{equation*}
\begin{split}
&\|1_K-F_K(\psi_n)\|_2\\
&\le
\left\|\int{\phi_n(\tu)}\left(\Gamma_K(\tx)-\Gamma_K(\tu)\right)d\tm(\tu)\right\|_2
+
\|F_K(\phi_n)-F_K(\psi_n)\|_2
\\
&\le
\sup_{\td_1(\tu,\tx)\le n^{-2}}\|\Gamma_K(\tx)-\Gamma_K(\tu)\|_2
+
\|F_K\|\cdot\|\phi_n-\psi_n\|_{L^1_{\tilde m}}
\le
\sqrt M\cdot n^{-1}+n^{-1}.
\end{split}
\end{equation*}
If $\psi\circ\tsigma=z\,\psi$ in $L^1_{\tm}$, then $F_K(\psi)\circ \sigma=z\,F_K(\psi)$ in $L^2_\mu$, because $\Gamma_K(\tu)\circ \sigma=\Gamma_K(\tsigma\tu)$. Hence $F_K$ maps eigenfunctions to eigenfunctions for the same eigenvalue. In particular, each $F_K(\psi_n)$ is a linear combination of eigenfunctions of $(A^\Z,\sigma,\mu)$ with eigenvalues which are also eigenvalues of $(\overline{\cO(\tpi x)},\tsigma,\tm)$.
This is \eqref{eq:tech-statement} for $f=1_K$.
\end{proof}

\begin{proposition}\label{proposition:R-D1}
Suppose that the dynamical system $(A^\Z,\sigma,\mu)$ is ergodic.
\begin{compactenum}[a)]
\item There exists a constant $\DD(\mu)\in[0,1]$ such that $D_1(x,y)=\DD(\mu)$ for $\mu\times\mu$-a.a.~$(x,y)$.
\end{compactenum}
\quad\\[-3.5mm]
Suppose now that $\DD(\mu)=0$.
\begin{compactenum}[a)]
\setcounter{enumi}{1}
\item There exists a $D_1$-equivalence class $X_\mu\subseteq A^\Z$ such that $\mu(X_\mu)=1$ and all points in $X_\mu$ are generic for $\mu$.
\item $\cO(\tpi x)$ is relatively compact for all $x\in X_\mu$.
\item $\tX_\mu:=\tpi(X_\mu)=\overline{\cO(\tx)}$ for all $\tx\in \tX_\mu$, and $X_\mu=\tpi^{-1}(\tX_\mu)$.
\item $(\tX_\mu,\td_1,\tsigma)$ is a compact, minimal, isometric, uniquely ergodic dynamical system with invariant measure $\tilde\mu:=\mu\circ\tpi^{-1}$.
\item If $\tilde{e}_\xi$ is an eigenfunction of $(\tX_\mu,\tsigma,\tilde\mu)$ with eigenvalue $\xi$, then $\tilde{e}_\xi$ is continuous (meaning that it has a continuous version) and $e_\xi:=\tilde{e}_\xi\circ\tpi$ is an everywhere defined eigenfunction for $(X_\mu,\sigma,\mu)$ with $|e_\xi(x)|=1$ for all $x\in X_\mu$.
\end{compactenum}
\end{proposition}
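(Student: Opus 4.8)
The plan is to establish a)--f) in the stated order; the only genuinely new input is the ergodicity of $(A^\Z,\sigma,\mu)$, together with Lemmas~\ref{lemma:dichotomy}, \ref{lemma:generic}, \ref{lemma:relatively-compact} and~\ref{lemma:compact-mean-a-p}, while the recurring nuisance is the non-separability of $A^\Z_\sim$, which I would handle throughout by passing to the separable (ultimately compact) set $\tX_\mu$, on which the traces of $\tAb$ and $\tAB$ coincide. For a), fix $x$: the Borel function $y\mapsto D_1(x,y)$ is $\sigma$-invariant in $y$ (since $D_1(x,\sigma y)=D_1(x,y)$), hence $\mu$-a.e.\ equal to $g(x):=\int D_1(x,y)\,d\mu(y)$; as $g$ is Borel (Tonelli) and $\sigma$-invariant in $x$, it is $\mu$-a.e.\ equal to a constant $\DD(\mu)\in[0,1]$, and Fubini gives $D_1=\DD(\mu)$ $\mu\times\mu$-a.e. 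Assume now $\DD(\mu)=0$. By Fubini there is a full-measure set $X_0$ with $D_1(x,\cdot)=0$ $\mu$-a.e.\ for every $x\in X_0$; I would pick $x_0\in X_0$ that is moreover $\mu$-generic (generic points have full measure, by Birkhoff applied to a countable dense subset of $C(A^\Z)$) and set $X_\mu:=\{y:D_1(x_0,y)=0\}$, a $D_1$-equivalence class with $\mu(X_\mu)=1$. Every $y\in X_\mu$ is $\mu$-generic, because \eqref{eq:generic-2} in Lemma~\ref{lemma:generic}b) forces the Birkhoff averages of $y$ and of $x_0$ to have the same limit for every $f\in C(A^\Z,\C)$. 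This proves b).

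For c)--e), fix $x\in X_\mu$. By Lemma~\ref{lemma:dichotomy}a), $\overline{\cO(\tpi y)}=\overline{\cO(\tpi x)}$ for every $y\in X_\mu$, so $\tpi(X_\mu)\subseteq\overline{\cO(\tpi x)}$ and $\tilde\mu:=\mu\circ\tpi^{-1}$ is concentrated on the $\tAb$-measurable set $\overline{\cO(\tpi x)}$ (Lemma~\ref{lemma:dichotomy}c)); restricted there it is a genuine Borel probability measure, which is $\tsigma$-invariant because $\tsigma\circ\tpi=\tpi\circ\sigma$ and $\mu$ is $\sigma$-invariant. Applying Lemma~\ref{lemma:relatively-compact} to the complete metric space $\overline{\cO(\tpi x)}$ with the isometry $\tsigma$ yields c): $\cO(\tpi x)$ is totally bounded, hence relatively compact. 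Conversely, each $\tilde u\in\overline{\cO(\tpi x)}\subseteq A^\Z_\sim=\tpi(A^\Z)$ equals $\tpi u$ with $D_1(x,u)=0$, so $u\in X_\mu$; this gives $\tX_\mu:=\tpi(X_\mu)=\overline{\cO(\tpi x)}$ for every $x\in X_\mu$, and $X_\mu=\tpi^{-1}(\tX_\mu)$ since $X_\mu$ is a union of $\tpi$-fibres (as $D_1\le d_1$), which is d). The independence of $\overline{\cO(\tpi x)}$ of $x\in X_\mu$ makes every orbit in $\tX_\mu$ dense, so $(\tX_\mu,\td_1,\tsigma)$ is minimal; it is compact (closed plus relatively compact) and isometric (restriction of the isometry $\tsigma$ of $A^\Z_\sim$). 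Since c) makes $\cO(\tpi x)$ relatively compact, Lemma~\ref{lemma:compact-mean-a-p} provides a unique $\tsigma$-invariant probability measure on $\tX_\mu$; as $\tilde\mu$ is such a measure with $\tilde\mu(\tX_\mu)=\mu(X_\mu)=1$, the system is uniquely ergodic with invariant measure $\tilde\mu$, which is e).

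Part f) is the step I expect to require the most care. The easy half: $\tpi:(X_\mu,\sigma,\mu)\to(\tX_\mu,\tsigma,\tilde\mu)$ is a measure-theoretic factor map that is defined at every point of $X_\mu$, so once $\tilde e_\xi$ is replaced by a continuous version, $e_\xi:=\tilde e_\xi\circ\tpi$ is everywhere defined on $X_\mu$; moreover the relation $\tilde e_\xi\circ\tsigma=\xi\,\tilde e_\xi$ then holds \emph{everywhere} on $\tX_\mu$ (two continuous functions that agree $\tilde\mu$-a.e.\ on a full-support space agree everywhere, and $\tilde\mu$ has full support by minimality), whence $e_\xi\circ\sigma=\xi\,e_\xi$ everywhere on $X_\mu$. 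The real content is therefore that every $L^2_{\tilde\mu}$-eigenfunction of the minimal, compact, isometric system $(\tX_\mu,\tsigma)$ has a continuous version of constant modulus. I would prove this by realising $(\tX_\mu,\tsigma)$ as a minimal rotation on a compact abelian group: the closure of $\{\tsigma^n:n\in\Z\}$ in the isometry group of $(\tX_\mu,\td_1)$ is, by equicontinuity, a compact abelian group acting transitively (by minimality), so $\tX_\mu$ is homeomorphic to its quotient $K$ by a point stabiliser --- this is precisely the compact abelian group structure on $\overline{\cO(\tpi x)}$ discussed in Section~\ref{sec:introduction} --- with $\tsigma$ translation by a topological generator $a$ and, by unique ergodicity, $\tilde\mu$ the Haar measure of $K$. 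Expanding $\tilde e_\xi$ into characters of $K$ and comparing with $\tilde e_\xi\circ\tsigma=\xi\,\tilde e_\xi$, using that distinct characters take distinct values at $a$ (otherwise their quotient would be a nontrivial character vanishing on the dense subgroup $\langle a\rangle$), forces $\tilde e_\xi$ to be a scalar multiple of a single continuous character $\chi_\xi$, with $|\chi_\xi|\equiv1$; equivalently, $|\tilde e_\xi|$ is continuous and $\tsigma$-invariant, hence constant on the minimal system, and equals $1$ after $L^2_{\tilde\mu}$-normalisation. Therefore $|e_\xi|\equiv1$ on $X_\mu$, which completes f).
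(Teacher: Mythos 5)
Your proof is correct and follows essentially the same route as the paper's: the invariance-plus-ergodicity argument for a), the $D_1$-equivalence-class construction together with Lemma~\ref{lemma:generic}b) for b), and Lemmas~\ref{lemma:dichotomy}, \ref{lemma:relatively-compact} and~\ref{lemma:compact-mean-a-p} for c)--e), exactly as in the paper. The only divergence is in f), where the paper simply states that continuity ``follows from e)'', while you spell out the standard realisation of the minimal isometric system as a rotation on a compact abelian group with Haar measure and the character expansion forcing the eigenfunction to be a continuous character of constant modulus --- this adds detail but does not change the approach.
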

\begin{proof}
a)\;
$D_1:X\times X\to[0,1]$ is measurable. Hence, for each $x\in X$, the function $D_{1,x}:y\mapsto D_1(x,y)$ is measurable. Since it is also $\sigma$-invariant and $\mu$ is ergodic, $D_{1,x}$ is $\mu$-a.s.~constant, say $D_{1,x}=f(x)$ $\mu$-a.s.~for each $x$. It follows that $f$ is $\mu$-a.s.~$\sigma$-invariant, say $f(x)=\DD(\mu)$ for $\mu$-a.e.~$x$, so that $D_1(x,y)=D_{1,x}(y)=f(x)=\DD(\mu)$ for $\mu\times\mu$-a.e.~$(x,y)$.\\[2mm]
b)\;
Suppose that $\DD(\mu)=0$. Denote by $X_\mu$ the set of all points $x$ such that $D_{1,x}(y)=0$ for $\mu$-a.e.~$y$. Then $\mu(X_\mu)=1$. If $x,x'\in X_\mu$, there is some $y\in X$ such that $D_1(x,y)=0=D_1(x',y)$, whence $D_1(x,x')=0$. Conversely, if $D_1(y,x)=0$ for some $x\in X_\mu$ and $y\in A^\Z$, then $D_{1,y}(x')=D_1(y,x')\le D_1(y,x)+D_1(x,x')=0$ for all $x'\in X_\mu$, and since $\mu(X_\mu)=1$, also $y\in X_\mu$. Finally, as $\mu(X_\mu)=1$, there exists at least one generic point $x\in X_\mu$, so that
all $y\in X_\mu$ are generic by Lemma~\ref{lemma:generic}b).
\\[2mm]
c)\;
Let $x\in X_\mu$.
Then, for any $x'\in X_\mu$ there are $n_1,n_2,\ldots\in\Z$ such that
$\td_1(\tpi x',\tsigma^{n_i}\tpi x)\to0$ as $i\to\infty$, so that $\tpi x'\in\overline{\cO(\tpi x)}$. Let $\tilde\mu=\mu\circ\tpi^{-1}$. Then $\tilde\mu$ is a probability measure on $(A^\Z_\sim,\tAb)$, invariant under~$\tsigma$, and
$\tilde{\mu} (\overline{\cO(\tpi x)})=\mu(\tpi^{-1}(\overline{\cO(\tpi x)}))\ge\mu(X_\mu)=1$. As $\overline{\cO(\tpi x})$ is separable, $\tilde{\mu}$ is actually a $\tsigma$-invariant Borel probability measure on $\overline{\cO(\tpi x})$, and the claimed relative compactness follows from Lemma~\ref{lemma:relatively-compact}.\\[2mm]
d)\;
Let $x,y\in X_\mu$, i.e.~$D_1(x,y)=0$, and denote $\tx=\tpi x$ and $\tilde y=\tpi y$. Then $\overline{\cO(\tx)}=\overline{\cO(\tilde y)}$ by Lemma~\ref{lemma:dichotomy}. In particular $\tpi(X_\mu)\subseteq\overline{\cO(\tx)}$ for all $x\in X_\mu$. Conversely, let $\tilde{y}=\tpi y\in\overline{\cO(\tx)}$ for some $y\in A^\Z$. Then $D_1(x,y)=0$, i.e. $y\in X_\mu$, so that $\tpi y\in\tpi(X_\mu)$. Hence $\tX_\mu=\overline{\cO(\tx)}$ for all $x\in X_\mu$. Finally, if $x\in\tpi^{-1}(\tX_\mu)$, then there is $x'\in X_\mu$ such that $\tpi(x')=\tpi(x)$, i.e.~$D_1(x,x')=0$, so that $x\in X_\mu$ because $X_\mu$ is a $D_1$-equivalence class.\\[2mm]
e)
\;Compactness follows from c), because $(A^\Z_\sim,\td_1)$ is complete, and minimality follows from d), so that the system is also uniquely ergodic since $\tsigma$ is an isometry.
The Borel measure $\tilde\mu$ was constructed in the proof of c).\\
f)
\;The continuity of $\tilde{e}_\xi$ follows from e), $e_\xi\circ \sigma=\tilde{e}_\xi\circ\tsigma\circ\tpi=\xi\,\tilde{e}_\xi\circ\tpi=\xi\,e_\xi$, and $|e_\xi(x)|=|\tilde{e}_\xi(\tpi x)|=1$ for all $x\in X_\mu$.
\end{proof}

The following theorem combines (special cases of) several of the main results from \cite{Lenz2023} with reasonings based on Proposition~\ref{proposition:R-D1}, but some parts are contained already in \cite{Wiener-Wintner-2} and \cite{Vershik2011}.

\begin{theorem}\label{theo:basic}
Suppose that the dynamical system $(A^\Z,\sigma,\mu)$ is ergodic. The following are equivalent:
\begin{compactenum}[(i)]
\item $\DD(\mu)=0$.
\item $(A^\Z,\sigma,\mu)$ has discrete spectrum.
\item At least one generic point of $(A^\Z,\sigma,\mu)$ is mean almost
periodic.
\item Every generic point of $(A^\Z,\sigma,\mu)$ is mean almost periodic.
\item At least one generic point of $(A^\Z,\sigma,\mu)$ is Besicovitch
almost periodic.
\item Almost every point of $(A^\Z,\sigma,\mu)$ is Besicovitch almost periodic.
\item $\cO(\tpi x)$ is relatively compact for at least one generic point $x$ of $(A^\Z,\sigma,\mu)$.
\item $\cO(\tpi x)$ is relatively compact for all generic points $x$ of $(A^\Z,\sigma,\mu)$.
\end{compactenum}
\end{theorem}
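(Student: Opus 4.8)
The plan is to establish the following web of implications, which one checks forms a strongly connected digraph on $\{(\text{i}),\dots,(\text{viii})\}$ (so all eight statements become mutually equivalent):
\[
(\text{i})\Rightarrow(\text{vii})\Rightarrow(\text{ii})\Rightarrow(\text{vi})\Rightarrow(\text{v})\Rightarrow(\text{iii}),\qquad(\text{vi})\Rightarrow(\text{iv}),\qquad(\text{ii})\Rightarrow(\text{i}),
\]
complemented by the equivalences $(\text{iii})\Leftrightarrow(\text{vii})$ and $(\text{iv})\Leftrightarrow(\text{viii})$, which are immediate from Lemma~\ref{lemma:compact-mean-a-p}, and the trivial implications $(\text{iv})\Rightarrow(\text{iii})$ and $(\text{vi})\Rightarrow(\text{v})$ (recall that $\mu$-generic points exist, since $\mu$-a.e.\ point is generic). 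Most of the individual arrows are already in the literature; the only one needing real work is $(\text{ii})\Rightarrow(\text{i})$.

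For the routine part: $(\text{i})\Rightarrow(\text{vii})$ is Proposition~\ref{proposition:R-D1}, whose parts (b),(c) hand us, when $\DD(\mu)=0$, a full-measure set $X_\mu$ of $\mu$-generic points with $\cO(\tpi x)$ relatively compact, so any $x\in X_\mu$ works; $(\text{vii})\Rightarrow(\text{ii})$ is exactly Lemma~\ref{lemma:compact-ONB}(a). For $(\text{ii})\Rightarrow(\text{vi})$ I would reproduce the classical Wiener--Wintner argument: given $f\in C(A^\Z,\C)$ and $\epsilon>0$, discrete spectrum lets us choose a finite combination $g=\sum_jc_j\,e_{\xi_j}$ of modulus-one eigenfunctions with $\|f-g\|_{L^2_\mu}<\epsilon$; for $\mu$-a.e.\ $x$ one then has $e_{\xi_j}(\sigma^kx)=\xi_j^k\,e_{\xi_j}(x)$ for all $j,k$, so $g(\sigma^kx)=\Xi_x(k):=\sum_jc_je_{\xi_j}(x)\,\xi_j^k$ is a trigonometric polynomial in $k$, and Birkhoff's theorem gives $\frac1{2n+1}\sum_{|k|\le n}|f(\sigma^kx)-\Xi_x(k)|\to\int|f-g|\,d\mu\le\|f-g\|_2<\epsilon$; a countable intersection over a $\|\cdot\|_\infty$-dense sequence of test functions $f$ and over $\epsilon=1/\ell$ yields Besicovitch almost periodicity of $\mu$-a.e.\ $x$. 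Finally, $(\text{v})\Rightarrow(\text{iii})$ and (applied pointwise) $(\text{vi})\Rightarrow(\text{iv})$ follow from the elementary fact that Besicovitch almost periodicity implies mean almost periodicity: writing $\1_{\{x_i\ne x_{i+k}\}}=\tfrac12\sum_{a\in A}|\1_{[a]}(\sigma^ix)-\1_{[a]}(\sigma^{i+k}x)|$, approximating each coordinate function $\1_{[a]}$ in Besicovitch seminorm by a trigonometric polynomial $\Xi^{(a)}$, and using $\Xi^{(a)}(i)-\Xi^{(a)}(i+k)=\sum_jc_j^{(a)}(1-\xi_j^k)\xi_j^i$, one bounds $d_1(x,\sigma^kx)\le\tfrac{\epsilon}{2}+C\max_j|1-\xi_j^k|$, so that $\{k:d_1(x,\sigma^kx)<\epsilon\}$ contains a Bohr set and is therefore syndetic. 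All of these steps can alternatively be quoted from \cite[Sections~3--4]{Lenz2023} (and, for $\Z$-actions, essentially from \cite{Wiener-Wintner-2}).

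The implication $(\text{ii})\Rightarrow(\text{i})$ is where I expect the main obstacle; here the plan is to use the Halmos--von Neumann model. Thus $(A^\Z,\sigma,\mu)$ is measurably isomorphic, via a measurable map $\psi\colon H\to A^\Z$, to the minimal rotation $(H,R_{h_0},m_H)$ on the compact abelian group $H$ dual to the eigenvalue group, so in particular $\{nh_0:n\in\Z\}$ is dense in $H$. Put $\chi(h):=\psi(h)_0$; then $\psi(h)_i=\chi(h+ih_0)$ for $m_H$-a.e.\ $h$ and all $i$, and $\sigma^n\psi(h)=\psi(h+nh_0)$ for $m_H$-a.e.\ $h$ and all $n$, so that for $\mu\times\mu$-a.e.\ $(\psi g,\psi h)$ and all $n$,
\[
d_1\big(\psi g,\sigma^n\psi h\big)=\limsup_{m\to\infty}\frac1{2m+1}\,\#\{|i|\le m:\chi(g+ih_0)\ne\chi((h+nh_0)+ih_0)\}.
\]
Setting $\rho(v):=m_H\{t\in H:\chi(t)\ne\chi(t+v)\}$, continuity of translation in $L^1(m_H)$ together with $\chi$ taking only finitely many values gives that $\rho$ is continuous at $0$ with $\rho(0)=0$. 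The right-hand side above is invariant under the diagonal rotation $R_{h_0}\times R_{h_0}$ on $H\times H$, whose ergodic components are the cosets $\{(a,a+v):a\in H\}$, each carrying an ergodic rotation with respect to Haar measure; Birkhoff's theorem on these components plus a Fubini argument identify the right-hand side with $\rho((h-g)+nh_0)$ for $m_H\times m_H$-a.e.\ $(g,h)$ and all $n$. Since $\{nh_0\}$ is dense and $\rho$ is continuous at $0$, this forces $\inf_n d_1(\psi g,\sigma^n\psi h)=\inf_n\rho((h-g)+nh_0)=0$ for $\mu\times\mu$-a.e.\ $(g,h)$, i.e.\ $\DD(\mu)=0$. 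The delicate points are purely measure-theoretic bookkeeping: upgrading ``for each $n$, a.e.\ $(g,h)$'' to ``a.e.\ $(g,h)$, all $n$'' by a countable intersection, passing from the fibrewise Birkhoff statement to an $m_H\times m_H$-a.e.\ statement via Fubini, and checking that the rewriting of $d_1$ in terms of $\chi$ holds off a $\mu\times\mu$-null set. Alternatively, one can invoke \cite{Lenz2023} or \cite{Vershik2011} for the implications into~(i).
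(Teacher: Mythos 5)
Your web of implications has one load-bearing arrow that does not hold as stated: the claimed ``pointwise'' implication (vi)$\Rightarrow$(iv). From (vi) and the fact that Besicovitch almost periodicity implies mean almost periodicity you only get that $\mu$-\emph{almost every} point is mean almost periodic; statement (iv) asserts this for \emph{every} generic point, and generic points need not lie in the full-measure set you control (indeed, Besicovitch almost periodicity genuinely can fail for some generic points, which is exactly why (vi) is phrased with ``almost every'' rather than ``every generic''). Since in your digraph the edge (vi)$\rightarrow$(iv) is the only edge entering the pair $\{$(iv),(viii)$\}$, these two statements are not actually tied into your strongly connected component, so their equivalence with the rest is not established. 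The missing ingredient is the observation the paper puts first: for every $\mu$-generic $x$ the quantity $d_1(x,\sigma^jx)$ equals $\mu\{y\in A^\Z: y_0\neq y_j\}$ (the relevant indicator depends on finitely many coordinates, so genericity gives an actual limit), hence the set $\{j: d_1(x,\sigma^jx)<\epsilon\}$ is the same for all generic $x$ and mean almost periodicity of one generic point forces it for all of them. Adding this as (iii)$\Rightarrow$(iv) (equivalently (vii)$\Rightarrow$(viii) via Lemma~\ref{lemma:compact-mean-a-p}) repairs the digraph; the rest of your quantifier bookkeeping ((iv)$\Rightarrow$(iii), (vi)$\Rightarrow$(v), (v)$\Rightarrow$(iii)) is fine.

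Apart from this, your routes are correct and partly different from the paper's. Using Proposition~\ref{proposition:R-D1}b),c) for (i)$\Rightarrow$(vii) is a legitimate shortcut (the paper goes (i)$\Rightarrow$(iii) via minimality from part e)), (vii)$\Rightarrow$(ii) is the same Lemma~\ref{lemma:compact-ONB}a), and your (ii)$\Rightarrow$(vi) is the same Wiener--Wintner-type argument as \eqref{eq:approximation}. Your (ii)$\Rightarrow$(i) is genuinely different: the paper stays inside $A^\Z$, expands the coordinate function in eigenfunctions and synchronizes two generic points by choosing $\ell$ with $\sum_j|c_j|\,|e_{\xi_j}(y)/e_{\xi_j}(x)-\xi_j^\ell|<\epsilon$ (justified by a small invariance argument in a footnote), whereas you pass to the Halmos--von Neumann rotation model and use continuity of $\rho(v)=m_H\{t:\chi(t)\neq\chi(t+v)\}$ at $0$ together with Birkhoff on the diagonal cosets, Fubini, and density of $\{nh_0\}$. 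This is sound (the a.e.\ intertwining, the countable intersections over $n,i$, and the measurability needed for Fubini all go through), and it trades the paper's explicit eigenfunction estimate for reliance on the Halmos--von Neumann representation; either is acceptable.
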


The next theorem adds additional information to items (ii) and (vi) of Theorem~\ref{theo:basic}. Its part b) is in line with our general attempt to study properties of individual orbits - not only of typical orbits (in a topological or measure theoretic sense).

\begin{theorem}\label{theo:precisions}
If $\DD(\mu)=0$ (equivalently if $(A^\Z,\sigma,\mu)$ has discrete spectrum), then
\begin{compactenum}[a)]
\item $\tpi:(A^\Z,\sigma,\mu)\to(\tX_\mu,\tsigma,\tilde\mu=\mu\circ\tpi^{-1})$ is an isomorphism of dynamical systems.
\item
The unique $D_1$-equivalence class $X_\mu\subseteq A^\Z$ with $\mu(X_\mu)=1$ is characterized as follows:
$$
X_\mu=\Bap(\mu):=\{x\in A^\Z:\text{ $x$ is $\mu$-generic and Besicovitch almost periodic}\}.
$$
\item For all $\epsilon>0$ and $x,y\in X_\mu$,
\begin{equation}
\mu(B_{d_1}(x,\epsilon))^{-1}\le\cN_\epsilon(\cO(x))=\cN_\epsilon(\cO(y))\le \mu(B_{d_1}(x,\epsilon/2))^{-1}.
\end{equation}
\end{compactenum}
\end{theorem}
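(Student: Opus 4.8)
The plan is to read off all three parts from the structure theory already in place: under $\DD(\mu)=0$, Proposition~\ref{proposition:R-D1} and Lemma~\ref{lemma:compact-ONB} give that $(A^\Z,\sigma,\mu)$ has discrete spectrum, that $\tpi$ is a measure‑preserving equivariant factor map onto the compact, minimal, uniquely ergodic, isometric system $(\tX_\mu,\tsigma,\tilde\mu)$, and that $\tX_\mu$ is a compact abelian group on which $\tsigma$ is the minimal rotation by some $g_0$. For part a) it then suffices to show that $\tpi$ generates the $\mu$‑mod‑null $\sigma$‑algebra of $A^\Z$: by Lemma~\ref{lemma:compact-ONB}a) the eigenfunctions of $(A^\Z,\sigma,\mu)$ span $L^2_\mu$; by Lemma~\ref{lemma:compact-ONB}b) together with Proposition~\ref{proposition:R-D1}d),f), every eigenvalue $\lambda$ of $(A^\Z,\sigma,\mu)$ is an eigenvalue of $(\tX_\mu,\tsigma,\tilde\mu)$ whose continuous eigenfunction $\tilde e_\lambda$ lifts through $\tpi$ to an everywhere‑defined eigenfunction $\tilde e_\lambda\circ\tpi$ on $X_\mu$; hence, by ergodicity of $(A^\Z,\sigma,\mu)$, every eigenfunction of $(A^\Z,\sigma,\mu)$ lies in $\tpi^{\ast}L^2(\tilde\mu)$, so $\tpi^{\ast}L^2(\tilde\mu)=L^2_\mu$ and $\tpi$ is an isomorphism.

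For part b), the inclusion $X_\mu\subseteq\Bap(\mu)$ is quick: points of $X_\mu$ are $\mu$‑generic (Proposition~\ref{proposition:R-D1}b)), at least one of them is Besicovitch almost periodic by Theorem~\ref{theo:basic}(vi) (applied inside the full‑measure set $X_\mu$), and Besicovitch almost periodicity is constant on the $D_1$‑class $X_\mu$ by Lemma~\ref{lemma:D_1-Besicovitch}. For the converse I would fix $x\in\Bap(\mu)$ and an arbitrary $x_0\in X_\mu$ and prove $D_1(x,x_0)=0$. The key input is that for every $f\in C(A^\Z,\C)$ the sequence $k\mapsto f(\sigma^kx)$ (and likewise for $x_0$) is Besicovitch almost periodic, so has Fourier–Bohr coefficients $a^f_\lambda(x)$; comparing the cross‑correlations $\lim_N\frac{1}{2N+1}\sum_{|k|\le N}1_K(\sigma^kx)1_L(\sigma^{k+n}x)=\mu(K\cap\sigma^{-n}L)$ — which holds by plain $\mu$‑genericity since $1_K\cdot(1_L\circ\sigma^n)$ is a cylinder — with the eigenfunction expansion of $\langle1_K,1_L\circ\sigma^n\rangle_{L^2_\mu}$ yields $a^K_\lambda(x)=\langle1_K,e_\lambda\rangle\,\theta_\lambda(x)$ for each cylinder $K$ and eigenvalue $\lambda$, with a phase $\theta_\lambda(x)$ of modulus one (and $a^K_\lambda(x)=0$ off the eigenvalue group $\Lambda_\mu$). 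Feeding products $f=gh$ into this identity and using the convolution rule for Fourier–Bohr coefficients of products of bounded Besicovitch almost periodic sequences, together with the $L^2_\mu$‑density of $C(A^\Z)$ to isolate single modes, shows $\lambda\mapsto\theta_\lambda(x)$ is a character of $\Lambda_\mu$, hence $\theta_\lambda(x)=\chi_\lambda(h_x)$ for some $h_x$ in the Pontryagin dual of $\Lambda_\mu$, which by part a) is $\tX_\mu$. Since $\chi_\lambda(g_0)=\lambda$, this gives $d_1(x,\sigma^nx_0)=1-\mathrm{Re}\,\Phi(h_x-h_{x_0}-ng_0)$ with $\Phi(h)=\sum_{\lambda\in\Lambda_\mu}\bigl(\sum_{a\in A}|\langle1_{[a]},e_\lambda\rangle|^2\bigr)\chi_\lambda(h)$ continuous on $\tX_\mu$ and $\Phi(0)=1$; density of $\{ng_0:n\in\Z\}$ in $\tX_\mu$ then forces $\sup_n\mathrm{Re}\,\Phi(h_x-h_{x_0}-ng_0)=1$, so $D_1(x,x_0)=0$ and $x\in X_\mu$.

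Part c) is comparatively routine. For $x,y\in X_\mu$ one has $\overline{\cO(\tpi x)}=\tX_\mu=\overline{\cO(\tpi y)}$ (Proposition~\ref{proposition:R-D1}d)), so $\cN_\epsilon(\cO(x))=\cN_\epsilon(\cO(\tpi x))$ and $\cN_\epsilon(\cO(\tpi y))$ both equal $\cN_\epsilon(\tX_\mu)$ (covering numbers of dense subsets of a compact metric space), which gives the middle equality. The rotation makes $\tX_\mu$ homogeneous — translations are $\td_1$‑isometries, being $\td_1$‑limits of powers of the isometry $\tsigma$ — so $\tilde\mu$ is its Haar measure and $\tilde\mu(B_{\td_1}(\tu,\epsilon))=\tilde\mu(B_{\td_1}(\tpi x,\epsilon))=\mu(B_{d_1}(x,\epsilon))$ for every $\tu$. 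Applying Lemma~\ref{lemma:relatively-compact} to $(\tX_\mu,\td_1,\tsigma,\tilde\mu)$ covers $\cO(\tpi x)$ by at most $\tilde\mu(B_{\td_1}(\tpi x,\epsilon/2))^{-1}$ balls of radius $\epsilon$, giving the upper bound $\cN_\epsilon(\cO(x))\le\mu(B_{d_1}(x,\epsilon/2))^{-1}$; and any cover of $\cO(\tpi x)$ by $n$ open $\epsilon$‑balls forces, after taking closures, $\tX_\mu$ into a union of $n$ closed $\epsilon$‑balls, so $1\le n\cdot\tilde\mu(\overline B_{\td_1}(\tpi x,\epsilon))$ and hence $\cN_\epsilon(\cO(x))\ge\mu(B_{d_1}(x,\epsilon))^{-1}$ (the open/closed distinction affecting only the at most countably many radii at which the sphere carries positive mass).

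The main obstacle is the inclusion $\Bap(\mu)\subseteq X_\mu$ in part b): one has to extract from a single $\mu$‑generic Besicovitch almost periodic point that its Fourier–Bohr phases assemble into one character of $\Lambda_\mu$ — equivalently, that its $\td_1$‑orbit closure is no larger a group rotation than $\tX_\mu$ — and the multiplicativity argument via products of cylinder observables (using that $C(A^\Z)$ is $L^2_\mu$‑dense) is where the real work lies. Parts a) and c) are essentially bookkeeping around Proposition~\ref{proposition:R-D1} and Lemmas~\ref{lemma:compact-ONB} and~\ref{lemma:relatively-compact}.
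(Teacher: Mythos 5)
Your parts a) and c), and the inclusion $X_\mu\subseteq\Bap(\mu)$, run essentially along the paper's own lines: surjectivity of the pullback operator via eigenfunctions (Lemma~\ref{lemma:compact-ONB} together with Proposition~\ref{proposition:R-D1}), then the standard passage from an $L^2$-conjugacy to a point isomorphism, and for c) the separated-set argument of Lemma~\ref{lemma:relatively-compact} combined with translation invariance of $\td_1$ and $\tilde\mu$. Two remarks on c): the justification ``covering numbers of dense subsets of a compact metric space'' is false as a general principle --- two dense subsets can have different covering numbers at a fixed radius, since centers must lie in the subset --- but the equality $\cN_\epsilon(\cO(x))=\cN_\epsilon(\cO(y))$ does follow from the fact you state right afterwards, namely that a translation of $\tX_\mu$ is a $\td_1$-isometry preserving $\tilde\mu$ and carrying $\cO(\tpi x)$ onto $\cO(\tpi y)$; this is exactly the paper's argument, so the slip is cosmetic. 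Your lower bound produces $1\le n\,\tilde\mu(\overline{B_{\td_1}(\tpi x,\epsilon)})$ with a closed ball; the paper's monotone limit $M\searrow1$ has the same feature, so this is on par with the source.

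The genuine divergence, and the genuine gap, is the inclusion $\Bap(\mu)\subseteq X_\mu$. The paper never constructs a character: it fixes the single observable $f(x)=x_0$, takes the canonical pointwise eigenfunctions $e_{f,\xi}$ furnished by the Wiener--Wintner/Lenz theory (\cite[Thm.~4.7]{Lenz2023}), shows by a second-moment (Parseval-type) computation that the orbit sequence of the fixed $x\in\Bap(\mu)$ is Besicovitch-close to the trigonometric polynomial $\sum_j c_j\xi_j^i e_{\xi_j}(x)$, and then synchronizes with $\mu$-a.e.\ $y$ as in the proof of Theorem~\ref{theo:basic}. You instead propose to assemble the Fourier--Bohr phases $\theta_\lambda(x)$ of all cylinder observables into a character of $\Eig(A^\Z,\sigma,\mu)$, identify it with a point $h_x\in\tX_\mu$, and evaluate $d_1(x,\sigma^n x_0)$ through an absolutely convergent correlation expansion; the endgame (density of $\{ng_0\}$, $\Phi(0)=1$) is fine. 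But the pivotal step --- multiplicativity $\theta_{\lambda\lambda'}(x)=\theta_\lambda(x)\theta_{\lambda'}(x)$ --- is asserted, not proved, and it is exactly where the content lies: you need a multiplicative choice of eigenfunctions $e_\lambda$ (Halmos--von Neumann), relative to which the phases are even well defined; the factorization of Fourier--Bohr coefficients of cross-correlations and the convolution formula for products of bounded Besicovitch almost periodic sequences whose coefficients are only square-summable (Bochner--F\'ej\'er approximation and an interchange of limits); and the isolation of single modes by continuous approximants of the merely measurable functions $\overline{e_\lambda}$, with sup-norm control because products are involved. Without these, the identities $\theta_\lambda(x)=\chi_\lambda(h_x)$ and $d_1(x,\sigma^nx_0)=1-\mathrm{Re}\,\Phi(h_x-h_{x_0}-ng_0)$ are unsupported, so as written this half of b) is a programme rather than a proof. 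If carried out it would buy something the paper's argument does not make explicit --- it locates $\tpi x$ inside the group $\tX_\mu$ directly from the Fourier--Bohr data of $x$ --- but the paper's route is markedly shorter because the Cauchy--Schwarz estimate with one observable, plus the a.e.\ synchronization, replaces the entire character-theoretic apparatus.
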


\begin{proof}[Proof of Theorem~\ref{theo:basic}]
(The equivalence of
properties (ii) -- (vi) follows from Theorems 3.8 and 4.7 from \cite{Lenz2023}, but we prefer to give a self-contained proof for the whole theorem.\footnote{The equivalence of (ii) and (vi) is in reference \cite[Thm.~6]{Wiener-Wintner-2} by Wiener and Wintner. {That (ii) is equivalent to the mean almost periodicity of $\mu$-a.e.~point is stated in \cite[Lem.~5]{Vershik2011}}}).\\
First observe that $d_1(x,\sigma^jx)=\mu\{x\in A^\Z: x_0\neq x_j\}$ for each $\mu$-generic point $x$. Hence, if one such point is mean almost periodic, then all $\mu$-generic points are. Together with Lemma~\ref{lemma:compact-mean-a-p} this proves the equivalence of (iii), (iv), (vii), and (viii).\\[1mm]
(i) $\Rightarrow$ (iii):\;
Because of Proposition~\ref{proposition:R-D1}e), $(\tX_\mu,\td_1,\tsigma)$ is a minimal dynamical system. Hence each $\tx\in\tX_\mu$ is almost periodic w.r.t.~the metric $\td_1$, i.e.~each $x\in X_\mu$ is mean almost periodic. Since all points in $X_\mu$ are generic by Proposition~\ref{proposition:R-D1}b), this proves (iii).\footnote{This argument is similar to the proof of (i)$\Rightarrow$(ii) in Theorem~3.8 from \cite{Lenz2023}.}\\[1mm]
(vii) $\Rightarrow$ (ii):\; This follows from Lemma~\ref{lemma:compact-ONB}a.\\[1mm]
(ii) $\Rightarrow$ (i)\,\&\,(vi):\;
Denote by $\Eig(A^\Z,\sigma,\mu)$ the set of eigenvalues of the system and recall that in our case $\Eig(A^\Z,\sigma,\mu)=\{\xi_1,\xi_2,\xi_3,\dots\}$ is a countable subgroup of the torus  $\C_1:=\{z\in\C: |z|=1\}$. Denote their normalized eigenfunctions by $e_{\xi_1},e_{\xi_2},\dots$. (They are unique up to constant factors of modulus one.)
Let $f\in C(A^\Z,\C)$. As in the proof of
Theorem~4.7 of \cite{Lenz2023}  we consider the Fourier coefficients
$c_j=\langle f,e_{\xi_j}\rangle$ and observe that
for each $\epsilon>0$ there exists $k\in\N$ such that
\begin{equation}\label{eq:approximation}
\begin{split}
\limsup_{n\to\infty}\frac{1}{2n+1}\sum_{|i|\le n}
\left|f(\sigma^ix)-\sum_{j=1}^kc_j e_{\xi_j}(\sigma^ix)\right|
&=
\int_{A^\Z}\left|f-\sum_{j=1}^kc_j e_{\xi_j}\right|d\mu\\
&\le
\left\|f-\sum_{j=1}^kc_j e_{\xi_j}\right\|_2
<\epsilon
\end{split}
\end{equation}
for $\mu$-a.a.~points~$x$. This shows (vi).

Consider now the particular $f:A^\Z\to\C, x\mapsto x_0$. (We assume without loss that $A\subset\C$.)
Fix $x,y\in A^\Z$ such that \eqref{eq:approximation} and
\begin{equation}\label{eq:eigenfunctions}
e_{\xi_j}(\sigma^ix)=\xi_j^ie_{\xi_j}(x)\;(j\in\N,i\in\Z)
\end{equation}
holds for both of them.
(The latter may exclude a further set of measure zero.)
Choose $\ell\in\Z$ (depending on $x$ and $y$) such that $\sum_{j=1}^k|c_j|\cdot|e_{\xi_j}(y)/e_{\xi_j}(x)-\xi_j^\ell|<\epsilon$.\,\footnote{
Let $H:=\overline{\langle(\xi_1,\dots,\xi_k)\rangle}$. $H$ is a compact subgroup of $\C_1^k$, and the map $\phi:A^\Z\to\C_1^k/H,\ x\mapsto[(e_{\xi_1}(x),\dots,e_{\xi_k}(x))]_H$ is $\mu$-almost surely well defined. Observe that $\phi(\sigma x)=\phi(x)$.
As $\phi$ takes values in a standard Borel space, this can be treated like an $\sigma$-invariant real-valued measurable function. So $\phi$ is $\mu$-a.s.~constant,
i.e.~$(e_{\xi_1}(y)/e_{\xi_1}(x),\dots,e_{\xi_k}(x)/e_{\xi_k}(y))\in H$ for $\mu$-a.e.~$x$ and $y$.}
Hence, for $\mu$-a.a.~$x$ and $y$,
\begin{equation}\label{eq:synchronization}
\begin{split}
&D_1(x,y)
\le
d_1(x,\sigma^\ell y)
=
\limsup_{n\to\infty}\frac{1}{2n+1}\sum_{|i|\le n}\left|x_i-y_{i+\ell}\right|\\
&\le
\limsup_{n\to\infty}\frac{1}{2n+1}\sum_{|i|\le n}\left(
\left|f(\sigma^ix)-\sum_{j=1}^kc_j e_{\xi_j}(\sigma^ix)\right|
+\left|\sum_{j=1}^kc_j\xi_j^i(e_{\xi_j}(x)-\xi_j^{\ell}e_{\xi_j}(y))\right|\right.\\
&\hspace{3.3cm}\left.+\left|\sum_{j=1}^kc_j e_{\xi_j}(\sigma^{i+\ell}y)-f(\sigma^{i+\ell}y)\right|
\right)\\
&<
3\epsilon.
\end{split}
\end{equation}
It follows that $D_1(x,y)=0$ for $\mu$-a.a.~$x,y$, that is (i).\\[1mm]
(vi) $\Rightarrow$ (v) and (v) $\Rightarrow$ (iii) are obvious.
\end{proof}

\begin{proof}[Proof of Theorem~\ref{theo:precisions}]
a)
\;The sets $X_\mu$ and $\tX_\mu$ are well defined and $\mu(X_\mu)=1$ (Proposition~\ref{proposition:R-D1}).
The operator $U_\tpi:L^2_{\mu\circ\tpi^{-1}}\to L^2_\mu$, $\tilde f\mapsto \tilde f\circ\tpi$, is linear and  isometric by definition, and it is onto in view of Lemma~\ref{lemma:compact-ONB}b. (Observe Proposition~\ref{proposition:R-D1} to apply the lemma with $\tm=\mu\circ\tpi^{-1}$.) It follows that for each Borel set
$A\subseteq X_\mu$ there is a Borel set
$\tilde A\subseteq\tX_\mu$ such that $\mu(A\triangle\tpi^{-1}(\tilde A))=0$
(and vice versa, of course), so that the systems $(X_\mu,\sigma,\mu)$
and $(\tX,\tsigma,\mu\circ\tpi^{-1})$ are conjugate (via $\tpi$).
Then Theorem 2.6 of \cite{walters} combined with Theorem 2.2 [{\it ibid.}] shows that $\tpi$ is an (almost everywhere invertible) point isomorphism between these two systems.
\\[1mm]
b)
\;Let $x\in X_\mu$. Then $x$ is $\mu$-generic by Proposition~\ref{proposition:R-D1}b), and $x$ is Besicovitch almost periodic by Theorem~\ref{theo:basic}[(i)$\Rightarrow$(vi)] in conjunction with Lemma~\ref{lemma:D_1-Besicovitch}.
For the proof of the converse inclusion we recall some facts from \cite{Lenz2023}:
The limit
\begin{equation}\label{eq:A-def}
A(x,f,\bar\xi):=\lim_{n\to\infty}\frac{1}{2n+1}\sum_{|k|\le n}f(\sigma^kx)\bar\xi^k
\end{equation}
exists for all $f\in C(A^\Z,\C)$,  $x\in\Bap(\mu)$ and $\xi\in\C_1$. Define
$e_{f,\xi}:A^\Z\to\C$, $e_{f,\xi}(x)=A(x,f,\bar\xi)$ if $x\in\Bap(\mu)$ and $e_{f,\xi}(x)=0$ otherwise. Then $e_{f,\xi}(\sigma x)=\xi\,e_{f,\xi}(x)$ for all $x\in A^\Z$ and
$|e_{f,\xi}|$ is constant on $\Bap(\mu)$, call this constant $\gamma_{f,\xi}$. Moreover, $e_{f,\xi}$ is a pointwise defined version of $P_\xi f$, the orthogonal projection of $f$ to the (one-dimensional) eigenspace of $\xi$ in $L^2_\mu$ (which is trivial if $\xi\not\in\Eig:=\Eig(A^\Z,\sigma,\mu)$)
\cite[Thm.~4.7]{Lenz2023}\footnote{$A(x,f,\bar\xi)$ is denoted by $A(f_x\bar\xi)$ in the systematic treatment of \cite{Lenz2023}. For $\Z$-actions, as studied here, most of this is contained in \cite[Thms.~7 and~8]{Wiener-Wintner-2}.}. In particular, $f=\sum_{\,\xi\in \Eig}e_{f,\xi}$ in the sense of $L^2_\mu$-convergence, because $L^2_\mu$ has an ONB of eigenvectors, by assumption. More precisely, $f=\sum_{\,\xi\in E_f}e_{f,\xi}$, where $E_f=\{\xi\in\Eig: \gamma_{f,\xi}>0\}$.

Now fix one element $x\in\Bap(\mu)$. We will show that $D(x,y)=0$ for $\mu$-a.e.~$y\in X_\mu$. This implies $x\in X_\mu$ by Proposition~\ref{proposition:R-D1}b) and finishes the proof. We want to proceed as in \eqref{eq:synchronization}, but we need some changes. We choose $e_{\xi_j}=\gamma_{f,\xi_j}^{-1}e_{f,\xi_j}$
for the ONB used in \eqref{eq:approximation} and note that
$c_j=\langle f, e_{\xi_j}\rangle=\gamma_{f,\xi_j}^{-1}\langle f,P_\xi f\rangle
=\gamma_{f,\xi_j}^{-1}\|P_\xi f\|_2^2=\gamma_{f,\xi_j}$ for this choice.
Just as in~\eqref{eq:synchronization} we estimate for $\mu$-a.e.~$y$
\begin{equation*}
\begin{split}
D_1(x,y)
\le&
\limsup_{n\to\infty}\frac{1}{2n+1}\sum_{|i|\le n}
\left|f(\sigma^ix)-\sum_{j=1}^kc_j\xi_j^i e_{\xi_j}(x)\right|
+2\epsilon.
\end{split}
\end{equation*}
Instead of the remaining limsup (which depends on the particular fixed $x\in\Bap(\mu)$) we first estimate
\begin{equation*}
B:=\limsup_{n\to\infty}\frac{1}{2n+1}\sum_{|i|\le n}
\left|f(\sigma^ix)-\sum_{j=1}^kc_j\xi_j^i e_{\xi_j}(x)\right|^2.
\end{equation*}
Observing that $x\in\Bap(\mu)$ and that $f$ is continuous
so that
identity~\eqref{eq:A-def} applies, one equates
\begin{equation*}
\begin{split}
B
&=
\limsup_{n\to\infty}\frac{1}{2n+1}
\sum_{|i|\le n}
\left(f(\sigma^ix)-\sum_{j=1}^kc_j\xi_j^i e_{\xi_j}(x)\right)
\left(\bar f(\sigma^ix)-\sum_{j=1}^k\bar c_j \bar\xi_j^i\overline{e_{\xi_j}(x)}\right)\\
&=
\int|f|^2\,d\mu +\sum_{j=1}^k|c_j|^2-2\sum_{j=1}^k\Re\left(A(x,f,\bar\xi_j)\,\overline{c_j\,e_{\xi_j}(x)}\right)\\
&=
\int|f|^2\,d\mu -\sum_{j=1}^k|c_j|^2
=
\left\|f-\sum_{j=1}^kc_j e_{\xi_j}\right\|_2^2
<
\epsilon^2,
\end{split}
\end{equation*}
because $A(x,f,\bar\xi_j)=e_{f,\xi_j}(x)=c_j\,e_{\xi_j}(x)$.
This implies $D_1(x,y)\le 3\epsilon^{2/3}+2\epsilon$.\,\footnote{If $0\le a_i\in\R$ and $\limsup_{n\to\infty}\frac{1}{2n+1}\sum_{|i|\le n}a_i^2<\epsilon^2$, then $\limsup_{n\to\infty}\frac{1}{2n+1}\sum_{|i|\le n}a_i<3\epsilon^{2/3}$. For the proof subdivide the index set $\Z$ into $\{i:a_i\le \epsilon^{2/3}\}$, $\{i:\epsilon^{2/3}<a_i\le 1\}$ and $\{i:a_i>1\}$, and observe that the second set has upper density at most $\epsilon^{2/3}$.}\\[1mm]
c)\;
Let $\tx=\tpi x$ and $\ty=\tpi y$. Then $\tx,\ty\in\tX_\mu$. Because of Proposition~\ref{proposition:R-D1}e) there is an isometry on $\tX_\mu$ (a translation) that maps $\cO(\tx)$ onto $\cO(\ty)$ and leaves the measure $\tilde\mu$ invariant. Hence $\cN_\epsilon(\cO(x))=\cN_\epsilon(\cO(\tx))=\cN_\epsilon(\cO(\ty))=\cN_\epsilon(\cO(y))$.

For a lower bound of
$\cN_\epsilon(\cO(\tx))$
we follow the argument in the proof of the elementary part of Frostman's lemma:
Suppose that $B_{\td_1}(\tx_1,\epsilon),\dots, B_{\td_1}(\tx_N,\epsilon)$ with $\tx_i\in\cO(\tx)$ is a covering of $\cO(\tx)$. Let $M>1$. Then
$B_{\td_1}(\tx_1,M\epsilon),\dots, B_{\td_1}(\tx_N,M\epsilon)$ is a covering
of $\overline{\cO(\tx)}$ satisfying
\begin{equation*}
1=\tilde\mu(\overline{\cO(\tx)})\le \sum_{i=1}^N\tilde\mu(B_{\td_1}(\tx_i,M\epsilon))\le N\,f(M\epsilon)
\end{equation*}
where $f(\epsilon):=\sup\{\tilde\mu(B_{\td_1}(\tS^n\tx,\epsilon)): n\in\Z\}$,
so that $\cN_\epsilon(\cO(\tx)) \ge 1/f(M\epsilon)$. As $\tS$ leaves the metric $\td_1$ and the measure $\tilde\mu$ invariant, we have in view of
Lemma~\ref{lemma:d1-formula}:
\begin{equation*}
\begin{split}
f(M\epsilon)
&=
\sup\{\tilde\mu(\tS^n(B_{\td_1}(\tx,M\epsilon))): n\in\Z\}=
\tilde\mu(B_{\td_1}(\tx,M\epsilon))
\searrow
\tilde\mu(B_{\td_1}(\tx,\epsilon))\\
&=\mu(B_{d_1}(x,\epsilon))
\end{split}
\end{equation*}
as $M\searrow 1$.
For an upper bound  of
$\cN_\epsilon(\cO(\tx))$ we
suppose that
$\{\tx_1,\dots,\tx_N\}$ is a maximal $\epsilon$-separated subset of $\cO(\tx)$. Then
$B_{d_1}(\tx_1,\epsilon),\dots,$ $ B_{d_1}(\tx_N,\epsilon)$ is a covering of $\cO(\tx)$, so that $\cN_\epsilon(\cO(\tx))\le N$, and $N\le 1/f(\epsilon/2)$ in view of Lemma~\ref{lemma:relatively-compact}.
\end{proof}

\begin{corollary}
If $X\subset A^\Z$ is a minimal closed subshift and if $(X,\sigma)$ is mean equicontinuous (see Subsection~\ref{subsec:Bcn}) with unique invariant measure $\mu$, then $X_\mu=X$.
\end{corollary}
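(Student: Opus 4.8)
The plan is to apply Proposition~\ref{proposition:R-D1} to the ergodic system $(X,\sigma,\mu)$ (its proof uses only ergodicity, so ``$A^\Z$'' there may be replaced by the closed subshift $X$), which produces a $D_1$-equivalence class $X_\mu\subseteq X$ with $\mu(X_\mu)=1$. The substance of the corollary is then the inclusion $X\subseteq X_\mu$, and I would obtain it by showing that all of $X$ lies in a single $D_1$-equivalence class and then appealing to uniqueness of the full-measure class.

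First I would check that Proposition~\ref{proposition:R-D1} is applicable. Since $(X,\sigma)$ is minimal and mean equicontinuous, item~a) of Subsection~\ref{subsec:Bcn} shows that it is an isomorphic extension of its maximal equicontinuous factor; in particular $(X,\sigma,\mu)$ has discrete spectrum, hence $\DD(\mu)=0$ by Theorem~\ref{theo:basic}. (Alternatively: mean equicontinuity gives $\cN_\epsilon(X)<\infty$, so $\cO(\tpi x)\subseteq X_\sim$ is relatively compact for the generic point $x$, and Theorem~\ref{theo:basic}, implication (vii)$\Rightarrow$(i), applies.) Thus $X_\mu$ is well defined, it is a single $D_1$-equivalence class, and $\mu(X_\mu)=1$; of course $\mu(X)=1$ as well.

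The key step is to show $D_1(x,y)=0$ for \emph{all} $x,y\in X$. Mean equicontinuity means $\tpi:X\to(X_\sim,\td_1)$ is continuous, so $X_\sim$ is compact and $(X_\sim,\tsigma)$ is a topological factor of the minimal system $(X,\sigma)$, hence itself minimal; therefore $\overline{\cO(\tpi y)}=X_\sim$ for every $y\in X$ (compare Remark~\ref{remark:minimal-case}). Using the description of $D_1$ from the beginning of Section~\ref{sec:Besicovitch space} and the fact that the distance from a point to a set equals the distance to its closure, we get
\[
D_1(x,y)=\inf_{n\in\Z}\td_1(\tpi x,\tsigma^n\tpi y)=\td_1\bigl(\tpi x,\overline{\cO(\tpi y)}\bigr)=\td_1(\tpi x,X_\sim)=0
\]
for all $x,y\in X$, since $\tpi x\in X_\sim$. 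Hence $X$ is contained in one $D_1$-equivalence class.

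To finish: this class has $\mu$-measure at least $\mu(X)=1$, and by Lemma~\ref{lemma:dichotomy} distinct $D_1$-equivalence classes are disjoint, so there is exactly one $D_1$-equivalence class of full measure. Since $X_\mu$ is such a class, the class containing $X$ is $X_\mu$, so $X\subseteq X_\mu$; together with $X_\mu\subseteq X$ this gives $X_\mu=X$. I do not expect a real obstacle here: the heavy lifting is already done in Subsection~\ref{subsec:Bcn} and Proposition~\ref{proposition:R-D1}, and the only delicate points are keeping track of which $D_1$-equivalence class $X_\mu$ is and the elementary remark that a minimal factor action on the compact space $X_\sim$ forces every $\td_1$-orbit closure in $X_\sim$ to equal $X_\sim$.
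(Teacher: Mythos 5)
Your argument is correct and takes essentially the same route as the paper: both proofs show $D_1(x,y)=0$ for all $x,y\in X$ from minimality together with mean equicontinuity, and then identify $X$ with the unique full-measure $D_1$-equivalence class $X_\mu$ from Proposition~\ref{proposition:R-D1}. The paper obtains $D_1(x,y)=0$ slightly more directly (minimality gives $y\in\overline{\cO(x)}$ in the product topology, and continuity of $\tpi$ turns this into $\inf_{n\in\Z}d_1(\sigma^nx,y)=0$), whereas you pass through minimality of the compact factor $(X_\sim,\tsigma)$; both are fine, and your explicit remark that the proposition is applied with $A^\Z$ replaced by $X$ so that $X_\mu\subseteq X$ makes precise a point the paper's one-line conclusion leaves implicit.
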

\begin{proof}
Let $x,y\in X$. As $(X,\sigma)$ is minimal, $y\in\overline{\cO(x)}$. In view of the mean equicontinuity this implies $D_1(x,y)=\inf_{n\in\Z}d_1(\sigma^nx,y)=0$. Hence all $x,y\in X$ belong to the same $D_1$-equivalence class, so that $X\subseteq X_\mu\subseteq X$.
\end{proof}

\begin{remark}[Weakly mixing systems]\label{remark:weak-mixing}
If $(A^\Z,\sigma,\mu)$ is weakly mixing, the invariant {pseudometric} $d_1$ equals a constant $\dd(\mu)\ge0$ $\mu\times\mu$-a.e.\footnote{This observation is contained in \cite[Thm.~3]{Vershik2011}, which shows much more: $(\sigma,\mu)$ is weakly mixing if and only if the {pseudometric} $\limsup_{n\to\infty}\frac{1}{n}\sum_{k=0}^{n-1}\rho(\sigma^kx,\sigma^ky)$ is constant $\mu\times\mu$-a.e.~for every cut {pseudometric} $\rho$ on $A^\Z$, where a cut {pseudometric} $\rho$ is determined by a finite measurable partition of $A^\Z$: $\rho(x,y)=1$ if $x$ and $y$ belong to different elements of the partition, and $\rho(x,y)=0$ otherwise.} If $\mu$ is not just a one-point measure, then $\dd(\mu)>0$.\footnote{Suppose for a contradiction that $\dd(\mu)=0$. Denote by $X_0$ the set of all $x\in A^\Z$ such that $d_1(x,y)=0$ for $\mu$-a.a.~$y$. Then $\mu(X_0)=1$, and if $x,x'$ are points in $X_0$, there exists $y\in A^\Z$ such that $d_1(x,y)=0=d_1(x',y)$, whence $d_1(x,x')=0$. Let $X_0':=X_0\cap \sigma^{-1}(X_0)\cap\{\text{generic points of }(\sigma,\mu)\}$. Then $\mu(X_0')=1$ and $d_1(x,\sigma x)=0$ for each $x\in X_0'$. This implies that all blocks $ab$ with $a,b\in A$, $a\neq b$, occur with density zero in $x$, and as $x$ is generic, these blocks have probability zero under $\mu$, so that the ergodic measure $\mu$ must be a one-point mass.
For example, if $A=\{0,1\}$ and $\mu$ is the $(p,1-p)$-Bernoulli measure, then $\dd(\mu)=2p(1-p)$.}
As $\mu\times \mu$ is invariant under all $\id\times \sigma^n$, $n\in\Z$, also
$D_1(x,y)=\dd(\mu)$ for $\mu\times\mu$-a.a.~$(x,y)$, i.e.~$x$ and $y$ belong to different $D_1$-equivalence classes for almost all pairs $(x,y)$. This is in maximal contrast to what happens in the case of discrete spectrum where, according to Theorem~\ref{theo:basic}, $x$ and $y$ belong to the same $D_1$-equivalence class for almost all pairs $(x,y)$.

Suppose now that $B$ is another alphabet and $F:A^\Z\to B^\Z$ is a nontrivial continuous factor map. Then also $(B^\Z,\sigma,\mu\circ F^{-1})$ is weakly mixing, so $\dd(\mu\circ F^{-1})>0$, and, for $\mu$-a.e.~$x$, $F(x)$ is not mean almost periodic by Theorem~\ref{theo:basic}. Vershik \cite[Thm.~5]{Vershik2011} proves that this property actually characterizes weakly mixing systems $(A^\Z,\sigma,\mu)$.
\end{remark}

\subsection{The case of weak model subsets of $\Z$ with Borel window}

Let $G$ be a compact, metric, abelian group with normalized Haar measure $\lambda$ and $\Delta:\Z\to G$ an injective group homomorphism with $\overline{\Delta(\Z)}=G$.
Denote by $R:G\to G,\ g\mapsto g+\Delta(1)$ and note that $\Delta(n+1)=R(\Delta(n))$, so that $\Delta(\Z)$ is the $R$-orbit of $0\in G$. Fix a Borel subset $W\subset G$ with $0<\lambda(W)<1$, called the ``window''. Define $\varphi:G\to A^\Z$, $\varphi(g)=(1_W(R^kg))_{k\in\Z}$. Then
$\mu:=\lambda\circ\varphi^{-1}$ is the \emph{Mirsky measure} on $\{0,1\}^\Z$ determined by $\Delta$ and $W$. As a factor of $(G,R,\lambda)$ the system $(\{0,1\}^\Z,\sigma,\mu)$ has discrete spectrum.
Theorem~\ref{theo:basic} and Proposition~\ref{proposition:R-D1} imply:

\begin{corollary}\label{coro:basic-model-set}
If $\mu$ is the Mirsky measure determined by $\Delta$ and $W$, then $\DD(\mu)=0$. In particular,
$(\tX_\mu,\tsigma)$ is a minimal, isometric, uniquely ergodic dynamical system with invariant measure $\tilde\mu:=\mu\circ\tpi^{-1}$, and the system $(A^\Z,\sigma,\mu)$ is isomorphic to the system $(\tX_\mu,\tsigma,\tilde\mu)$.
\end{corollary}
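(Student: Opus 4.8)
The plan is to check that the Mirsky-measure setting satisfies the hypotheses of the results already proved above, and then to quote them. First I would record that $(G,R,\lambda)$ is an ergodic rotation: since $\Delta$ is injective with $\overline{\Delta(\Z)}=G$, the subgroup of $G$ generated by $\Delta(1)$ equals $\Delta(\Z)$, which is dense, and a translation on a compact abelian group is ergodic (indeed minimal and uniquely ergodic) precisely when the orbit of the identity is dense. Hence its factor $(\{0,1\}^\Z,\sigma,\mu)=(G,R,\lambda)\circ\varphi^{-1}$ is ergodic; and because the eigenfunctions of $(G,R,\lambda)$ are the characters of $G$, a factor of a system with purely discrete spectrum again has purely discrete spectrum, so $(\{0,1\}^\Z,\sigma,\mu)$ has discrete spectrum. (This is exactly the assertion made in the text just before the corollary, so one may also simply take it as given.)

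Next, since $(\{0,1\}^\Z,\sigma,\mu)$ is ergodic with discrete spectrum, the implication (ii)$\Rightarrow$(i) of Theorem~\ref{theo:basic} gives $\DD(\mu)=0$, which is the first claim. With $\DD(\mu)=0$ in hand, Proposition~\ref{proposition:R-D1}(e) immediately yields that $(\tX_\mu,\td_1,\tsigma)$ is a compact, minimal, isometric, uniquely ergodic dynamical system whose unique invariant Borel probability measure is $\tilde\mu=\mu\circ\tpi^{-1}$ (the measure constructed in the proof of Proposition~\ref{proposition:R-D1}(c)). Finally, Theorem~\ref{theo:precisions}(a), which applies precisely because $\DD(\mu)=0$, states that $\tpi$ is an isomorphism of dynamical systems between $(A^\Z,\sigma,\mu)$ and $(\tX_\mu,\tsigma,\tilde\mu)$. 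Stringing these three citations together proves the corollary.

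I do not expect a genuine obstacle: the statement is a direct repackaging of Theorem~\ref{theo:basic}, Proposition~\ref{proposition:R-D1}(e) and Theorem~\ref{theo:precisions}(a). The only point that needs any care is the (standard) verification that the weak-model-set construction produces an \emph{ergodic} system with \emph{discrete spectrum}, so that those results are legitimately applicable; everything else is a quotation. One could even bypass Theorem~\ref{theo:basic} by using that $\varphi$ already presents $(A^\Z,\sigma,\mu)$ as a factor of the isometric system $(G,R,\lambda)$ and arguing directly that $D_1$ vanishes a.e., but routing through Theorem~\ref{theo:basic} is shorter and keeps the exposition uniform.
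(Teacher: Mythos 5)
Your proof is correct and follows essentially the same route as the paper, which likewise observes that $(\{0,1\}^\Z,\sigma,\mu)$ is an ergodic factor of $(G,R,\lambda)$ with discrete spectrum and then simply quotes Theorem~\ref{theo:basic}, Proposition~\ref{proposition:R-D1} (and, for the isomorphism statement, Theorem~\ref{theo:precisions}a). Your explicit verification of ergodicity and of the factor/discrete-spectrum step is exactly the "only point that needs any care," and the rest is the same chain of citations the paper intends.
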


Denote
\begin{equation*}
G_{gen}:=\{g\in G:\varphi(g)\text{ is $\mu$-generic}\}.
\end{equation*}
\begin{lemma}\label{lemma:d1-formula}
\begin{compactenum}[a)]
\item
There is a measurable set $G_0\subseteq G$ with $\lambda(G_0)=1$ such that, for all $g,g'\in G_0$,
\begin{equation}\label{eq:d1-formula}
d_1(\varphi(g),\varphi(g'))=d_W(g,g'):=\lambda((W-g')\triangle(W-g)).
\end{equation}
(Observe that $d_W$ is a pseudo-metric on $G$.)
\item
If the window $W\subset G$ is compact, then the set $G_0$ from item a) can be chosen to be $G_{gen}$.
\end{compactenum}
\end{lemma}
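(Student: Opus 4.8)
The plan is to rewrite the left side as an ergodic average for the \emph{product rotation} and to exploit that this product is far from ergodic. Put $\Phi:G\times G\to\{0,1\}$, $\Phi(a,b):=|1_W(a)-1_W(b)|$. Since $\varphi(g)_k=1_W(R^kg)$,
\begin{equation*}
d_1(\varphi(g),\varphi(g'))=\limsup_{n\to\infty}\frac1{2n+1}\sum_{|k|\le n}\Phi\bigl((R\times R)^k(g,g')\bigr).
\end{equation*}
The orbit closures of $(G\times G,R\times R)$ are the cosets $C_s:=\{(g,g+s):g\in G\}$, $s\in G$; on each $C_s$ the map $R\times R$ is minimal, so $(C_s,R\times R)$ is uniquely ergodic with invariant measure $\nu_s$, the image of $\lambda$ under $g\mapsto(g,g+s)$. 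Two elementary translation identities give $\int_{C_s}\Phi\,d\nu_s=d_W(g,g')$ whenever $s=g'-g$, and $d_W(g,g')=\rho(g-g')$ with $\rho(s):=\lambda(W\triangle(W+s))$; moreover $\rho$ is continuous with $\rho(0)=0$ --- this is the continuity of translation in $L^1(\lambda)$. So everything comes down to comparing the $\Phi$-average along the orbit of $(g,g')$ with $\int_{C_{g'-g}}\Phi\,d\nu_{g'-g}$.

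For a) I would first get the statement for $\lambda\times\lambda$-a.e.\ pair. By Birkhoff's ergodic theorem (two-sided form) applied to the bounded function $\Phi$ on $(G\times G,R\times R,\lambda\times\lambda)$, the averages above converge $\lambda\times\lambda$-a.e.\ to $\mathbb{E}[\Phi\mid\cI]$ with $\cI$ the invariant $\sigma$-algebra. Strong continuity of the translation action of the closed diagonal $\{(t,t):t\in G\}=\overline{\langle(\Delta(1),\Delta(1))\rangle}$ on $L^2(\lambda\times\lambda)$ forces every $\cI$-measurable function to be invariant under the whole diagonal, so $\cI$ coincides modulo null sets with $\sigma\bigl((g,g')\mapsto g-g'\bigr)$; conditioning along cosets then yields $\mathbb{E}[\Phi\mid\cI](g,g')=\int_{C_{g'-g}}\Phi\,d\nu_{g'-g}=d_W(g,g')$. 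Hence $\cG:=\{(g,g'):\lim_n\tfrac1{2n+1}\sum_{|k|\le n}\Phi((R\times R)^k(g,g'))=d_W(g,g')\}$ is $\lambda\times\lambda$-conull, symmetric, and contains the diagonal. To upgrade this to a statement about all pairs in a conull set, set $G_0:=\{g:\lambda(\{g':(g,g')\in\cG\})=1\}$, so $\lambda(G_0)=1$ by Fubini and symmetry, and fix $g,g'\in G_0$. The set $\{h:(g,h)\in\cG\text{ and }(g',h)\in\cG\}$ is conull, hence dense since Haar measure has full support, so there are $h_j\to g$ with $(g,h_j),(g',h_j)\in\cG$; then $d_1(\varphi(g),\varphi(h_j))=d_W(g,h_j)$ and $d_1(\varphi(g'),\varphi(h_j))=d_W(g',h_j)$, and the triangle inequalities
\begin{equation*}
d_W(g',h_j)-d_W(g,h_j)\le d_1(\varphi(g),\varphi(g'))\le d_W(g,h_j)+d_W(g',h_j)
\end{equation*}
squeeze $d_1(\varphi(g),\varphi(g'))$ to $d_W(g,g')$ as $j\to\infty$, using $d_W(g,h_j)=\rho(g-h_j)\to0$ and continuity of $d_W$.

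For b) I would show, when $W$ is compact, that the convergence holds for \emph{every} $g,g'\in G_{gen}$ (a conull set), bypassing Birkhoff. Expand $\Phi(a,b)=1_W(a)+1_W(b)-2\,1_{W\times W}(a,b)$ and $1_{W\times W}(a,b)=1-1_{W^c}(a)-1_{W^c}(b)+1_{W^c\times W^c}(a,b)$. Because $W$ is compact, $1_{W\times W}$ is upper semicontinuous and $1_{W^c\times W^c}$ lower semicontinuous on $G\times G$; sandwiching each between continuous functions from the suitable side and using unique ergodicity of $(C_{g'-g},R\times R,\nu_{g'-g})$ \emph{at the individual point} $(g,g')$ gives $\limsup_n\tfrac1{2n+1}\sum_{|k|\le n}1_{W\times W}((R\times R)^k(g,g'))\le\nu_{g'-g}(W\times W)$ and $\liminf_n\tfrac1{2n+1}\sum_{|k|\le n}1_{W^c\times W^c}((R\times R)^k(g,g'))\ge\nu_{g'-g}(W^c\times W^c)$. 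Since $g,g'\in G_{gen}$, $\tfrac1{2n+1}\sum_{|k|\le n}1_W(R^kg)\to\lambda(W)$ and likewise at $g'$; feeding this and the second bound through the two displayed identities turns the $\liminf$-estimate into $\liminf_n\tfrac1{2n+1}\sum_{|k|\le n}1_{W\times W}((R\times R)^k(g,g'))\ge\nu_{g'-g}(W\times W)$. Together with the first estimate the $1_{W\times W}$-average converges to $\nu_{g'-g}(W\times W)$, hence $\tfrac1{2n+1}\sum_{|k|\le n}\Phi((R\times R)^k(g,g'))\to2\lambda(W)-2\nu_{g'-g}(W\times W)=d_W(g,g')$, so in particular its $\limsup$, namely $d_1(\varphi(g),\varphi(g'))$, equals $d_W(g,g')$. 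As $G_{gen}$ is conull this proves b).

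The main obstacle is the passage from ``$\lambda\times\lambda$-a.e.\ pair'' to ``all pairs in a conull $G_0$'': for any conull $G_0$ one has $G_0-G_0=G$, so no purely measure-theoretic argument can work, and the equality must be recovered from the joint continuity of $d_W$ (equivalently, continuity of translation in $L^1(\lambda)$) together with the triangle inequality for $d_1$, which is exactly the role of the three-point sandwich in a); in the compact-window case b) this difficulty disappears because the semicontinuity squeeze already holds at every individual point of the relevant coset, so no a.e.\ statement is needed there.
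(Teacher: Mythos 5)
Your proof is correct, but it follows a genuinely different route from the paper's. The paper never passes to the product system: it approximates the Borel window $W$ in measure by sets $W_\varepsilon$ with $\lambda(\partial W_\varepsilon)=0$, so that unique ergodicity of $(G,R)$ gives the exact identity $d_1$ between the codings for the window $W_\varepsilon$ at \emph{every} pair $(g,g')$, and then controls the replacement error through the Birkhoff averages of $1_{W\triangle W_\varepsilon}$ along the two single orbits of $g$ and $g'$ --- a one-variable a.e.\ statement, which is why no ``a.e.-pair versus all pairs in a conull set'' issue arises there; for b) it takes $W_\varepsilon$ to be open neighborhoods of the compact $W$ with null boundary, so that genericity of $\varphi(g)$ alone bounds that error term and $G_0$ can be taken to be $G_{gen}$. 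You instead work with $\Phi(a,b)=|1_W(a)-1_W(b)|$ on $(G\times G,R\times R)$: in a) you identify the invariant $\sigma$-algebra with $\sigma\bigl((g,g')\mapsto g'-g\bigr)$ via density of $\Delta(\Z)$ and $L^2$-continuity of translations, get the identity for $\lambda\times\lambda$-a.e.\ pair from Birkhoff, and then upgrade to all pairs in a conull $G_0$ by the common-partner/triangle-inequality argument together with continuity of $\rho(s)=\lambda(W\triangle(W+s))$ --- an extra step the paper's construction avoids, but which you execute correctly and whose necessity you rightly flag; in b) you exploit that each coset $C_s$ is closed and invariant and that the restricted rotation is conjugate to $(G,R,\lambda)$, hence uniquely ergodic (this deserves a sentence, since $\overline{\Delta(\Z)}=G$ is what makes it work), and then the semicontinuity sandwich for $1_{W\times W}$ (u.s.c.) and $1_{W^c\times W^c}$ (l.s.c.) plus genericity of $g$ and $g'$ for the marginal averages yields convergence of the $\Phi$-averages at every pair of points of $G_{gen}$. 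What each approach buys: the paper's is more elementary and produces $G_0$ directly as an intersection of one-variable Birkhoff conditions, with b) obtained by merely re-choosing the approximating windows; yours exhibits $d_W(g,g')$ as the exact ergodic integral over the orbit-closure coset, and in b) delivers the full limit (not just the $\limsup$) at every pair of generic points without choosing regular neighborhoods, at the cost of the product-system machinery and the upgrading argument in a).
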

\begin{proof}
a)
\;Let $\varepsilon>0$. Since $G$ is a compact metrizable space, there there is a Borel set $W_\varepsilon\subseteq G$ with $\lambda(\partial W_\varepsilon)=0$ and $\lambda(W\triangle W_\varepsilon)<\varepsilon$. Denote by $\varphi_\varepsilon$ the coding map associated with the window $W_\varepsilon$.
Then, for any $g,g'\in G$,
\begin{equation*}
\begin{split}
d_1(\varphi_\varepsilon(g'),\varphi_\varepsilon(g))
&=
\du\{k\in\Z:\ 1_{W_\varepsilon}(R^kg')\neq 1_{W_\varepsilon}(R^kg)\}\\
&=
\limsup_{n\to\infty}\frac{1}{2n+1}\sum_{|k|\le n}1_{{W_\varepsilon}\triangle (W_\varepsilon+(g'-g))}(R^kg)\\
&=
\lambda({W_\varepsilon}\triangle (W_\varepsilon+(g'-g)))
=
\lambda((W_\varepsilon-g')\triangle (W_\varepsilon-g))
\end{split}
\end{equation*}
by the uniform ergodic theorem, because $\lambda(\partial({W_\varepsilon}\triangle (W_\varepsilon+(g-g'))))\le\lambda(\partial W_\varepsilon)+\lambda(\partial(W_\varepsilon+(g-g')))=0$ for all $g,g'\in G$.
Hence
\begin{equation}\label{eq:d_1-estimate for later}
\begin{split}
&\ |d_1(\varphi(g'),\varphi(g))-\lambda((W-g')\triangle(W-g))|\\
\le &\
|d_1(\varphi(g'),\varphi(g))-d_1(\varphi_\varepsilon(g'),\varphi_\varepsilon(g))|\\
&\hspace*{12mm}+
|\lambda((W_\varepsilon-g')\triangle(W_\varepsilon-g))-\lambda((W-g')\triangle(W-g))|\\
\le &\
d_1(\varphi(g'),\varphi_\varepsilon(g'))+d_1(\varphi(g),\varphi_\varepsilon(g))
+
2\lambda(W_\varepsilon\triangle W)\\
\le &\
\limsup_{n\to\infty}\frac{1}{2n+1}\sum_{|k|\le n}1_{W_\varepsilon\triangle W}(R^kg')
+
\limsup_{n\to\infty}\frac{1}{2n+1}\sum_{|k|\le n}1_{W_\varepsilon\triangle W}(R^kg)
+
2\varepsilon.
\end{split}
\end{equation}
By Birkhoff's ergodic theorem there is a set $G_\varepsilon$ with $\lambda(G_\varepsilon)=1$ such that for all $g,g'\in G_\varepsilon$ the two $\limsup$s equal $\lambda(W_\varepsilon\triangle W)$, whence
$|d_1(\varphi(g'),\varphi(g))-\lambda((W-g')\triangle(W-g))|<4\varepsilon$ for all $g,g'\in G_\varepsilon$. Letting $\varepsilon=\varepsilon_n\to0$ proves assertion a).\\[2mm]
b)
\;The proof proceeds as in a), except that the use of Birkhoff's theorem in~\eqref{eq:d_1-estimate for later} must be avoided:
Fix a metric for the topology on $G$ and
choose $W_\varepsilon$ as an open $r_\varepsilon$-neighborhood of $W$ such that $\lambda(W_\varepsilon\setminus W)<\varepsilon$. The choice of the $r_\varepsilon$ can be made such that $\lambda(\partial W_\varepsilon)=0$ for all $\varepsilon>0$. Hence, for all $g\in G_{gen}$,
\begin{equation*}
\begin{split}
&\limsup_{n\to\infty}\frac{1}{2n+1}\sum_{|k|\le n}1_{W_\varepsilon\triangle W}(R^kg)\\
&=
\limsup_{n\to\infty}\frac{1}{2n+1}\left(\sum_{|k|\le n}1_{W_\varepsilon}(R^kg)-
\sum_{|k|\le n}1_{W}(R^kg)\right)\\
&=
\lambda(W_\varepsilon)-\liminf_{n\to\infty}\frac{1}{2n+1}\sum_{|k|\le n}(\varphi(g))_k\\
&=
\lambda(W_\varepsilon)-\mu([1])
=
\lambda(W_\varepsilon)-\lambda(W)<\varepsilon.
\end{split}
\end{equation*}
\end{proof}

\begin{remark}\label{remark:G_0=G?}
\textbf{($\tpi\circ\varphi$ is an ``almost'' automorphism from $G$ to $\tX_\mu$)}\\
Recall from \cite{KR2018} that $W$ is \emph{Haar aperiodic}, if $\lambda(W\triangle(W+h))=0$ implies $h=0$ ($h\in G$).
In this case $d_W$ from \eqref{eq:d1-formula} is a translation invariant metric on $G$ which is continuous for the topology on $G$, and as $G$ is compact, $d_W$ generates the topology on $G$.
Denote $G_1=G_0\cap \varphi^{-1}(X_\mu)$ with $G_0$ from Lemma~\ref{lemma:d1-formula}.
Then $\lambda(G_1)=1$ and $\tpi\circ\varphi_{|G_1}:G_1\to \tX_\mu$ is isometric and extends to an isometry $\Phi$ from $G$ to $\tX_\mu$.
It is easily seen that $\Phi$ is also a group isomorphism.

Hence one may choose
$G_0=G_1=\{g\in G: \tpi\circ\varphi(g)=\Phi(g)\}$: indeed, if $g,g'$ are from this set, then $\tpi\circ\varphi(g)=\Phi(g)\in\tX_\mu$ so that $\varphi(g)\in X_\mu$,
see Proposition~\ref{proposition:R-D1}d, and $d_1(\varphi(g),\varphi(g'))=\td_1(\tpi\circ\varphi(g),\tpi\circ\varphi(g'))=\td_1(\Phi(g),\Phi(g'))=d_W(g,g')$.
 Observe that, for this (maximal) choice of $G_0$, $\tpi\circ\varphi(G_0)=\Phi(G_0)\subseteq\tX_\mu$ with equality if and only if $G_0=G$. The example in Section~\ref{sec:rotation} can be produced in such a way that $G_0\neq G$, see Footnote~\ref{foot:arbitrary-choice}.

Note that $\widetilde W:=\Phi(W)$ is a Borel window
in $\tX_\mu$. It is compact if and only if $W$ is.
The corresponding coding map $\tilde\varphi:\tX_\mu\to\{0,1\}^\Z$ satisfies
$\tilde\varphi(\Phi(g))=\varphi(g)$ for all $g$ such that $\tpi\circ\varphi(g)=\Phi(g)$, in particular for $\mu$-a.e. $g$.
\end{remark}
\begin{remark}[Ergodic measure on $\{0,1\}^\Z$ with pure point spectrum are Mirsky measures]\quad\\
If $(\{0,1\}^\Z,\sigma,\mu)$ is any ergodic system with discrete spectrum, we know from Theorem~\ref{theo:precisions}a that there are a $\sigma$-invariant Borel set $Y\subseteq A^\Z$ and a $\tsigma$-invariant Borel set $\widetilde Y\subseteq\tX_\mu$ with $\mu(Y)=\tilde\mu(\widetilde Y)=1$ such that $\tpi_{|Y}:Y\to\widetilde Y$ is a bimeasurable bijection satisfying $\tsigma\circ\tpi=\tpi\circ\sigma$ on $Y$. Then $\widetilde W:=\tpi(Y\cap[1]_0)$ is a Borel window in $\widetilde Y\subset\tX_\mu$. If $\tilde\varphi$ denotes the coding w.r.t.~this window, then $\tilde\varphi(\tpi(x))=x$ for all $x\in Y$, hence for $\mu$-a.e.~$x$. In particular, $\mu=\tilde\mu\circ\tilde\varphi^{-1}$ is the Mirsky measure generated by $(\tX_\mu,\tsigma,\tilde\mu)$ and $\widetilde W$.
\end{remark}

\begin{remark}\label{remark:B-R(mu)=0}
Let $x=\varphi(g)$, $x'=\varphi(g')$. As $\sigma^nx'=\varphi(R^ng')$ for all $x'=\varphi(g')$ and $n\in\Z$, Lemma~\ref{lemma:d1-formula} gives an alternative proof of the fact $\DD(\mu)=\DD(\lambda\circ\varphi^{-1})=0$:
\begin{equation}\label{eq:D_1-approximation}
D_1(x,x')=\inf\{d_1(x,\sigma^nx'):n\in\Z\}
=
\inf_{n\in\Z}\lambda(W\triangle((W-(R^ng'-g)))=0
\end{equation}
for $\mu\times\mu$-a.a.~$(x,x')$, because the $R$-orbit of $g'$ approaches $g$ arbitrarily close.
\end{remark}

\section{Specializing to $\cB$-free systems}\label{sec:B-free}

\subsection{Some consequences of the general theory}\label{subsec:B-free-intro}

For a primitive\,\footnote{$\cB\subseteq\N\setminus\{1\}$ is primitive, if no element of $\cB$ divides any other element of $\cB$.} set $\cB\subseteq\N\setminus\{1\}$ let $\cM_\cB=\bigcup_{b\in\cB}b\Z$ be its set of multiples and $\cF_\cB=\Z\setminus\cM_\cB$ the set of $\cB$-free numbers. Let $\eta=\eta_\cB:=1_{\cF_\cB}$. We assume that $\cB$ is a \emph{Besicovitch set}, i.e.~the
right one-sided density $d_\N(\cB):=\lim_{n\to\infty}\frac 1{n}\card(\cM_\cB\cap\{1,\dots,n\})$ exists. Since $\cM_\cB=-\cM_\cB$ this is equivalent to the existence of the density
$d(\cM_\cB):=\lim_{n\to\infty}\frac 1{2n+1}\card(\cM_\cB\cap\{-n,\dots,n\})$. In the sequel we will use the notations $d,d_\N$ for such densities and $\du,\du_\N$ for upper densities, correspondingly. By $d_1$ we continue to denote the Besicovitch pseudo-metric.

We recall the setting from \cite{KKL2016} (with minor notational changes):
Let $H:=\prod_{b\in\cB}\Z/b\Z$, $\Delta:\Z\to H,\ n\mapsto(n\mod b)_{b\in\cB}$ the canonical embedding, and $G:=\overline{\Delta(\Z)}$. $G$ is the \emph{odometer group} associated with $\cB$, and we denote its normalized \emph{Haar measure} by $\lambda$. The compact set $W:=\{g\in G:g_b\neq0\ (\forall b\in\cB)\}$ is the \emph{window} associated with $\cB$. Denote by $R:G\to G,\ g\mapsto g+\Delta(1)$ and note that $\Delta(n+1)=R(\Delta(n))$, so that $\Delta(\Z)$ is the $R$-orbit of $0\in G$.
Define $\varphi:G\to A^\Z$, $\varphi(g)=(1_W(R^kg))_{k\in\Z}$ and observe that $\eta=\varphi(\Delta(0))$. Then
$\mu:=\lambda\circ\varphi^{-1}$ is the \emph{Mirsky measure} on $\{0,1\}^\Z$ determined by $\Delta$ and $W$.
In this situation Corollary~\ref{coro:basic-model-set} and Lemma~\ref{lemma:d1-formula}b apply.
In particular:

\begin{lemma}\label{lemma:d1-formula-B-free}
Suppose $\cB$ is a Besicovitch set. Then
\begin{align}
d_1(\eta,\varphi(g))\label{eq:d1-formula-2}
&=
\lambda(W\triangle(W-g))\quad\text{for all $g$ for which $\varphi(g)$ is $\mu$-generic},\\
d_1(\eta,\sigma^r\eta)\label{eq:d1-formula-3}
&=
\lambda(W\triangle(W-\Delta(r)))=d(\cM_\cB\triangle(\cM_\cB-r))\quad\text{for all $r\in\Z$.}
\end{align}
\end{lemma}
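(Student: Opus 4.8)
The plan is to deduce both identities from Lemma~\ref{lemma:d1-formula}, using only that $\cB$ is a Besicovitch set (so that $\eta$ is $\mu$-generic) and the explicit description of the coding map $\varphi$. Since the window $W=\{g\in G:g_b\neq0\ (\forall b\in\cB)\}$ is compact, part~b) of Lemma~\ref{lemma:d1-formula} permits the choice $G_0=G_{gen}$, so that \eqref{eq:d1-formula} reads
\[
d_1(\varphi(g),\varphi(g'))=\lambda\bigl((W-g')\triangle(W-g)\bigr)\qquad\text{for all }g,g'\in G_{gen}.
\]
First I would record the two elementary consequences of the definitions: $\eta=\varphi(\Delta(0))$ (with $\Delta(0)=0$) and $\sigma^r\eta=\varphi(R^r0)=\varphi(\Delta(r))$ for all $r\in\Z$, the latter from $\sigma\circ\varphi=\varphi\circ R$. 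Since $\eta$ is $\mu$-generic we have $0=\Delta(0)\in G_{gen}$, and as genericity is shift invariant also $\Delta(r)\in G_{gen}$ for every $r\in\Z$.

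Formula~\eqref{eq:d1-formula-2} is then immediate: the $g$ for which $\varphi(g)$ is $\mu$-generic are exactly those in $G_{gen}$, and for such $g$ the displayed identity with $g'=0$ gives $d_1(\eta,\varphi(g))=d_1(\varphi(0),\varphi(g))=\lambda(W\triangle(W-g))$. Specializing $g=\Delta(r)\in G_{gen}$ yields the first equality in \eqref{eq:d1-formula-3}: $d_1(\eta,\sigma^r\eta)=d_1(\eta,\varphi(\Delta(r)))=\lambda(W\triangle(W-\Delta(r)))$.

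It remains to identify $\lambda(W\triangle(W-\Delta(r)))$ with $d(\cM_\cB\triangle(\cM_\cB-r))$, for which I would use genericity a second time. The function $x\mapsto 1_{[x_0\neq x_r]}$ on $\{0,1\}^\Z$ depends on two coordinates only, hence is continuous, so $\mu$-genericity of $\eta$ gives
\[
d\bigl(\cM_\cB\triangle(\cM_\cB-r)\bigr)
=\lim_{n\to\infty}\frac1{2n+1}\#\{|k|\le n:\eta_k\neq\eta_{k+r}\}
=\mu(\{x:x_0\neq x_r\}),
\]
the first equality because $\eta_k\neq\eta_{k+r}$ is equivalent to $k\in\cM_\cB\triangle(\cM_\cB-r)$ (which incidentally shows this density exists). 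On the other hand $\varphi(g)_0=1_W(g)$ and $\varphi(g)_r=1_W(g+\Delta(r))=1_{W-\Delta(r)}(g)$, so $\{x_0\neq x_r\}$ pulls back under $\varphi$ to $W\triangle(W-\Delta(r))$, whence $\mu(\{x_0\neq x_r\})=\lambda\circ\varphi^{-1}(\{x_0\neq x_r\})=\lambda(W\triangle(W-\Delta(r)))$. Combining the equalities finishes the proof.

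The argument is essentially bookkeeping layered on Lemma~\ref{lemma:d1-formula}; the only input that is not purely formal is the $\mu$-genericity of $\eta=\varphi(\Delta(0))$, which is exactly what the Besicovitch-set hypothesis provides (equivalently, that $d(\cM_\cB)$ exists). The two coding identities, the shift invariance of genericity, and the continuity of the cylinder functions involved are all routine, so I do not expect any genuine obstacle here.
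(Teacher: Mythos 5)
Your proof is correct, and for the two main identities it follows the paper's own route: genericity of $\eta=\varphi(\Delta(0))$ (and hence of $\sigma^r\eta=\varphi(\Delta(r))$) supplied by the Besicovitch hypothesis, then Lemma~\ref{lemma:d1-formula}b with the compact window and $G_0=G_{gen}$. Where you genuinely diverge is the last equality $\lambda(W\triangle(W-\Delta(r)))=d(\cM_\cB\triangle(\cM_\cB-r))$: the paper rewrites $d_1(\eta,\sigma^r\eta)$ as the \emph{upper} density $\du(\cM_\cB\triangle(\cM_\cB-r))$ and then invokes Corollary~\ref{coro:density-exists}, whose proof in Subsection~\ref{subsec:on-densities} is a purely arithmetic argument (Davenport--Erd\H{o}s applied to $\cM_\cB\cup(\cM_\cB+r)$), whereas you apply $\mu$-genericity of $\eta$ to the continuous cylinder function $x\mapsto 1_{[x_0\neq x_r]}$ and pull the event $\{x_0\neq x_r\}$ back under $\varphi$ to $W\triangle(W-\Delta(r))$. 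This is valid, it proves the existence of the density as a by-product, and it is the same mechanism the paper itself uses at the start of the proof of Theorem~\ref{theo:basic} ($d_1(x,\sigma^jx)=\mu\{x_0\neq x_j\}$ for generic $x$); in fact your computation yields \eqref{eq:d1-formula-3} even without invoking Lemma~\ref{lemma:d1-formula}. What the paper's route buys instead is Corollary~\ref{coro:density-exists} as a free-standing arithmetic fact, which it reuses later (e.g.\ in \eqref{eq:distance-density} and the formulas of Subsection~\ref{subsec:arithmetic}). One small caution: the genericity of $\eta$ is not formally ``equivalent'' to the existence of $d(\cM_\cB)$ by definition-chasing; it is a theorem (Davenport--Erd\H{o}s, via \cite{BKKL2015}) that the Besicovitch property implies genericity for the Mirsky measure, so this step deserves a citation rather than the phrase ``exactly what the hypothesis provides'' --- but since you flag it as the one nontrivial input, this is a presentational point, not a gap.
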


\begin{proof}
If $\cB$ is a Besicovitch set, then $\eta=\varphi(\Delta(0))$ is $\mu$-generic (as a consequence of the Erd\H{o}s-Davenport theorem, see~\cite{BKKL2015}), and so is $\sigma^r\eta$. Hence \eqref{eq:d1-formula-2} and the first identity from \eqref{eq:d1-formula-3} follow from
Lemma~\ref{lemma:d1-formula}b.
Finally,
$d_1(\eta,\sigma^r\eta)=\du(\cM_\cB\triangle(\cM_\cB-r))$ by rewriting the definitions, and this equals $d(\cM_\cB\triangle(\cM_\cB-r))$ by Corollary~\ref{coro:density-exists}.
\end{proof}

\begin{corollary}\label{coro:eta-in-X_mu}
If $\cB$ is a Besicovitch set, then $\eta\in X_\mu$, where $\mu$ is the Mirsky measure. Moreover, $\eta$ is Besicovitch almost periodic, and $\overline{\cO(\tilde{\eta})}=\tX_\mu$.\footnote{Attention: In many papers on $\cB$-free systems $\tX$ denotes the hereditary closure of a subshift~$X\subseteq\{0,1\}^\Z$.}
\end{corollary}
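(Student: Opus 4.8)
The plan is to produce one auxiliary point $y\in X_\mu$ that is $\mu$-generic, to check $D_1(\eta,y)=0$, and then to read off all three assertions from the general theory of Section~\ref{sec:Besicovitch space}. First I would set the stage. Since $(\{0,1\}^\Z,\sigma,\mu)$ is a factor of $(G,R,\lambda)$ it has discrete spectrum, so $\DD(\mu)=0$ by Corollary~\ref{coro:basic-model-set}; hence the $D_1$-equivalence class $X_\mu$ with $\mu(X_\mu)=1$ is well defined, and $X_\mu=\Bap(\mu)$ by Theorem~\ref{theo:precisions}b. The Erd\H{o}s--Davenport theorem (invoked already in the proof of Lemma~\ref{lemma:d1-formula-B-free}) shows that $\eta=\varphi(0)$ is $\mu$-generic.

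Next, because $\lambda(G_{gen})=1=\lambda(\varphi^{-1}(X_\mu))$, I can choose $g\in G_{gen}\cap\varphi^{-1}(X_\mu)$ and put $y:=\varphi(g)\in X_\mu$. Every shift $\sigma^ny=\varphi(R^ng)$ is again $\mu$-generic, so \eqref{eq:d1-formula-2} of Lemma~\ref{lemma:d1-formula-B-free} applies with the argument $R^ng=g+\Delta(n)$ and gives
$$
d_1(\eta,\sigma^ny)=d_1\bigl(\varphi(0),\varphi(R^ng)\bigr)=\lambda\bigl(W\triangle(W-g-\Delta(n))\bigr)\qquad(n\in\Z).
$$
As $\Delta(\Z)$ is dense in $G$ there are $n_k$ with $\Delta(n_k)\to -g$, hence $g+\Delta(n_k)\to 0$; since $h\mapsto\lambda(W\triangle(W-h))=\|1_W-1_W(\cdot+h)\|_{L^1(\lambda)}$ is continuous (continuity of translation in $L^1$ of the compact group $G$) and vanishes at $h=0$, it follows that $d_1(\eta,\sigma^{n_k}y)\to 0$. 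Therefore $D_1(\eta,y)=\inf_{n\in\Z}d_1(\eta,\sigma^ny)=0$, and Proposition~\ref{proposition:R-D1}b (applied with the class $X_\mu$ and the point $y\in X_\mu$) yields $\eta\in X_\mu$.

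The remaining claims are then immediate: $\eta\in X_\mu=\Bap(\mu)$ says exactly that $\eta$ is $\mu$-generic and Besicovitch almost periodic, and $\overline{\cO(\tilde\eta)}=\tpi(X_\mu)=\tX_\mu$ by Proposition~\ref{proposition:R-D1}d. The only point that needs a moment's thought is that one must not take $\eta$ itself as reference point --- the whole task is to \emph{identify} the $D_1$-class of $\eta$, so an independent $\mu$-generic point already known to belong to $X_\mu$ is required; once that is in place the computation of $D_1(\eta,y)$ reduces, via Lemma~\ref{lemma:d1-formula-B-free}, to density of $\Delta(\Z)$ together with continuity of translation on $G$.
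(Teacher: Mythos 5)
Your proof is correct and follows essentially the same route as the paper: the paper also combines the formula $d_1(\eta,\varphi(g))=\lambda(W\triangle(W-g))$ for generic $\varphi(g)$ (Lemma~\ref{lemma:d1-formula-B-free}) with density of $\Delta(\Z)$ and $L^1$-continuity of translation (via Remark~\ref{remark:B-R(mu)=0}) to get $D_1(\eta,x)=0$ for $\mu$-a.e.\ $x$, and then invokes Proposition~\ref{proposition:R-D1} and Theorem~\ref{theo:precisions}b exactly as you do. The only cosmetic difference is that you verify $D_1(\eta,y)=0$ for one well-chosen $y\in G_{gen}\cap\varphi^{-1}(X_\mu)$ while the paper states it for almost every $x$; the underlying computation is identical.
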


\begin{proof}
In view of \eqref{eq:d1-formula-2}, identity~\eqref{eq:D_1-approximation} from Remark~\ref{remark:B-R(mu)=0} also holds for $x'=\eta$. Hence
$D_1(\eta,x)=0$ for $\mu$-a.e.~$x$ so that
$\eta\in X_\mu$, Theorem~\ref{theo:precisions}b implies that $\eta$ is Besicovitch almost periodic, and $\overline{\cO(\tilde{\eta})}=\tX_\mu$ follows from Proposition~\ref{proposition:R-D1}d. Alternatively one can use the characterization of $X_\mu$ from Theorem~\ref{theo:precisions}b, which applies to $\eta$ in view of \cite[Cor.~2.16]{Bergelson2019} or (the much more general) Proposition 3.39 and Remark 3.41 from \cite{Lenz2020}.
\end{proof}

In the following corollary we consider taut sets: The set $\cB$ is \emph{taut} \cite{Hall1996}, if $\ddelta(\cM_{\cB\setminus\{b\}})<\ddelta(\cM_\cB)$ for all $b\in\cB$, see \cite{BKKL2015} and \cite{KKL2016} for a discussion of this concept in a dynamical context. In particular: if $\cB$ is taut, then its associated window is Haar aperiodic \cite[Rem.~1.2]{KKL2016}. So the next corollary is just a special case of Remark~\ref{remark:B-R(mu)=0}.

\begin{corollary}
If $\cB$ is a taut Besicovitch set, then
$\tpi\circ\varphi:(G,R,\lambda)\to (\tX_\mu,\sigma,\mu\circ\tpi^{-1})$ is an isomorphism.
\end{corollary}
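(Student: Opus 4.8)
The plan is to deduce the statement from Remark~\ref{remark:G_0=G?}, which already does almost all of the work in the general framework of Subsection~2.3, of which the $\cB$-free setting is an instance (as noted after Lemma~\ref{lemma:d1-formula-B-free}, Corollary~\ref{coro:basic-model-set} and Lemma~\ref{lemma:d1-formula}b apply here). First I would record that, since $\cB$ is taut, its associated window $W=\{g\in G:g_b\neq0\ \forall b\in\cB\}$ is Haar aperiodic by \cite[Rem.~1.2]{KKL2016}. By Remark~\ref{remark:G_0=G?}, Haar aperiodicity of $W$ makes $d_W$ a translation-invariant metric generating the topology of the compact group $G$, so that $\tpi\circ\varphi_{|G_1}\colon G_1\to\tX_\mu$ (with $G_1=G_0\cap\varphi^{-1}(X_\mu)$, $\lambda(G_1)=1$, and $G_0$ as in Lemma~\ref{lemma:d1-formula}a) is an isometry for $(d_W,\td_1)$ that extends to an isometry $\Phi\colon G\to\tX_\mu$ which is simultaneously a group isomorphism, and one may take $G_0=\{g\in G:\tpi\circ\varphi(g)=\Phi(g)\}$, so that $\tpi\circ\varphi=\Phi$ on a set of full Haar measure.

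It then remains to package $\Phi$ as an isomorphism of measure-preserving dynamical systems. The intertwining $\tpi\circ\varphi\circ R=\tsigma\circ\tpi\circ\varphi$ follows immediately from $\varphi\circ R=\sigma\circ\varphi$ together with $\tpi\circ\sigma=\tsigma\circ\tpi$; consequently $\Phi\circ R=\tsigma\circ\Phi$ holds on the $R$-invariant full-measure set where $\tpi\circ\varphi=\Phi$, hence everywhere by continuity of both sides and minimality of $(G,R)$. For the measures, $\mu=\lambda\circ\varphi^{-1}$ gives $\tilde\mu:=\mu\circ\tpi^{-1}=\lambda\circ(\tpi\circ\varphi)^{-1}=\lambda\circ\Phi^{-1}$, the last equality because $\tpi\circ\varphi=\Phi$ $\lambda$-a.e. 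Thus $\Phi$ is a topological (in particular bimeasurable) group isomorphism carrying $(G,R,\lambda)$ to $(\tX_\mu,\tsigma,\tilde\mu)$, and since it coincides $\lambda$-a.e.\ with $\tpi\circ\varphi$, the latter is the asserted isomorphism.

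The only genuine difficulty is the bookkeeping of the various full-measure sets ($G_0$, $\varphi^{-1}(X_\mu)$, and their $R$-orbits): one has to be sure they can be chosen compatibly so that $\tpi\circ\varphi$ actually equals the topological isomorphism $\Phi$ off a single $\lambda$-null set — and this is precisely what is arranged in Remark~\ref{remark:G_0=G?} via the maximal choice of $G_0$. Everything else needed ($\DD(\mu)=0$, compactness of $W$, and the metric identity $d_1(\varphi(g),\varphi(g'))=d_W(g,g')$ on $G_{gen}$) has already been supplied by Corollary~\ref{coro:basic-model-set} and Lemma~\ref{lemma:d1-formula-B-free}, so no new estimates are required.
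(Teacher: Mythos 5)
Your proposal is correct and follows essentially the same route as the paper: tautness yields Haar aperiodicity of the window via \cite[Rem.~1.2]{KKL2016}, and the corollary is then read off from the ``almost automorphism'' Remark~\ref{remark:G_0=G?} (the paper's text points to Remark~\ref{remark:B-R(mu)=0}, apparently a referencing slip, but the content invoked is the same). The equivariance and measure-transport checks you add are routine bookkeeping already implicit in that remark.
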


\subsection{Arithmetic identities and estimates for $d_1(\eta,\sigma^r\eta)$}\label{subsec:arithmetic}

In this subsection we always assume that $\eta=\eta_\cB$ for a Besicovitch set $\cB$.
Recall from Lemma~\ref{lemma:d1-formula-B-free} (see also Corollary~\ref{coro:density-exists}) that, for all $r\in\Z$,
\begin{equation}\label{eq:distance-density}
d_1(\eta,\sigma^r\eta)
=
d(\cM_\cB\triangle(r+\cM_\cB))
=
2\left(d(\cM_\cB\cup(r+\cM_\cB))-d(\cM_\cB)\right).
\end{equation}

\begin{proposition}\label{prop:like-Rogers}
If $r\mid r'$, then $d(\cM_\cB\cup(r+\cM_\cB))\ge d(\cM_\cB\cup(r'+\cM_\cB))$ and
$d_1(\eta,\sigma^r\eta)\ge d_1(\eta,\sigma^{r'}\eta)$.
\end{proposition}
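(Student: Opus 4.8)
The plan is to move everything onto the odometer $G$ and then to localise one prime at a time. By \eqref{eq:distance-density}, $d_1(\eta,\sigma^r\eta)=d(\cM_\cB\triangle(r+\cM_\cB))=2\bigl(d(\cM_\cB\cup(r+\cM_\cB))-d(\cM_\cB)\bigr)$, so the asserted inequality is equivalent to $d_1(\eta,\sigma^r\eta)\ge d_1(\eta,\sigma^{r'}\eta)$; using \eqref{eq:d1-formula-3} and $\lambda(W\triangle(W-\Delta(s)))=2\lambda(W)-2\lambda(W\cap(W-\Delta(s)))$, this in turn amounts to
\begin{equation*}
\lambda\bigl(W\cap(W-\Delta(r))\bigr)\le\lambda\bigl(W\cap(W-\Delta(r'))\bigr).
\end{equation*}
The first reduction I would make uses that $W$ is invariant under multiplication by any unit of $G$ (regarded as a profinite ring): if $u\in G^\times$ then $(ug)_b=u_b g_b$ with $u_b\in(\Z/b\Z)^\times$, so $g\mapsto ug$ is a $\lambda$-preserving automorphism with $u(W-h)=W-uh$, whence $\lambda(W\cap(W-h))$ depends only on the $G^\times$-orbit of $h$. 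Decomposing $G=\prod_q G_q$ over the primes (with each $G_q$ a quotient of $\Z_q$), the orbit of $\Delta(s)$ is determined by the tuple $\bigl(v_q(s)\wedge e_q\bigr)_q$, where $e_q:=\sup_{b\in\cB}v_q(b)$. Since $r\mid r'$ gives $v_q(r)\le v_q(r')$ for all $q$, it suffices to treat $r'=pr$ for a single prime $p$, and then — applying a unit to absorb the factors $G_q$, $q\ne p$ — to compare $\lambda(W\cap(W-h))$ with $\lambda(W\cap(W-\hat h))$, where $h=\Delta(r)$ and $\hat h$ agrees with $h$ in every $G_q$, $q\ne p$, while $\hat h_p=p\,h_p$. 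The general case then follows along a chain $r\mid pr\mid\cdots\mid r'$.

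For this single-prime comparison I would split $G=G_p\times G'$, $G'=\prod_{q\ne p}G_q$, and integrate over $G'$. For fixed $t\in G'$ the fibre $W_t:=\{s\in G_p:(s,t)\in W\}$ has a very rigid form: $(s,t)\in W$ iff for no $b\in\cB$ both $p^{v_p(b)}\mid s$ and $t\in (b/p^{v_p(b)})G'$, so $W_t=\bigcap\bigl(G_p\setminus p^{v_p(b)}G_p\bigr)$ over those $b$ with $t\in(b/p^{v_p(b)})G'$, and since the subgroups $p^kG_p$ are nested this is again $G_p\setminus p^{a(t)}G_p$ for a single exponent $a(t)\in\{0,1,\dots,e_p\}\cup\{\infty\}$. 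With $y:=h_p$ one has $\lambda(W\cap(W-h))=\int_{G'}\lambda_p\bigl(W_t\cap(W_{t+h_{G'}}-y)\bigr)\,d\lambda'(t)$ and the same with $y$ replaced by $py$ (note $\hat h_{G'}=h_{G'}$), so it is enough to show, for all exponents $a,c$ as above and all $y\in G_p$ with $v_p(y)<e_p$,
\begin{equation*}
\lambda_p\bigl((G_p\setminus p^aG_p)\cap((G_p\setminus p^cG_p)-y)\bigr)\le\lambda_p\bigl((G_p\setminus p^aG_p)\cap((G_p\setminus p^cG_p)-py)\bigr).
\end{equation*}

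The last step is a short computation in the local ring $G_p$ (which is $\Z/p^{e_p}\Z$ or $\Z_p$). The set $\{v_p(s)\ge a\}\cap\{v_p(s+y)\ge c\}=p^aG_p\cap(p^cG_p-y)$ is empty unless $v_p(y)\ge\min(a,c)$, in which case it is a coset of $p^{\max(a,c)}G_p$; hence by inclusion–exclusion
\begin{equation*}
\lambda_p\bigl(\{v_p(s)<a\}\cap\{v_p(s+y)<c\}\bigr)=1-p^{-a}-p^{-c}+\1[v_p(y)\ge\min(a,c)]\,p^{-\max(a,c)}.
\end{equation*}
Replacing $y$ by $py$ changes only the indicator term, and $v_p(py)=v_p(y)+1>v_p(y)$ forces $\1[v_p(y)\ge\min(a,c)]\le\1[v_p(py)\ge\min(a,c)]$; this is the displayed inequality. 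The part I expect to be most delicate is getting the first paragraph's reduction right: the estimate is genuinely \emph{false} for arbitrary integer sets in place of $\cM_\cB$, so one cannot argue purely with densities and must exploit the odometer structure to decouple the primes — after that, the fibrewise computation is routine.
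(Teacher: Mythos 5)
Your argument is correct, and it takes a genuinely different route from the paper's. The paper never touches the odometer here: it proves a general Rogers-type statement (Theorem~\ref{th:rogers}) comparing logarithmic densities of $\bigcup_i(g_i+b_i\Z)$ and $\bigcup_i(mg_i+b_i\Z)$, by induction on the number of prime divisors of $\lcm\{b_i\}$ via CRT (splitting $\Z_\ell\cong\Z_u\times\Z_v$ and choosing multipliers $m',m''$ congruent to $m$ on one factor and to $1$ on the other), treats infinite $\cB$ by Davenport--Erd\H{o}s approximation in logarithmic density, and only afterwards converts to natural densities using the Besicovitch hypothesis (Subsection~\ref{subsec:on-densities}). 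You instead invoke the Besicovitch hypothesis at the outset, via genericity and \eqref{eq:d1-formula-3}, to move everything to Haar measure on $G$, then exploit the ring structure $G\cong\prod_q G_q$ and the $G^\times$-invariance of $W$ to decouple the primes, finishing with an explicit fibrewise inclusion--exclusion in the local factor $G_p$. Both proofs rest on the same CRT decoupling, but they buy different things: the paper's route yields the stronger arithmetic Theorem~\ref{th:rogers} (arbitrary multiplier, logarithmic densities, no Besicovitch assumption at that stage), whereas yours is shorter once Lemma~\ref{lemma:d1-formula-B-free} is available and makes the mechanism transparent: the overlap $\lambda(W\cap(W-\Delta(r)))$ can only grow under $r\mapsto pr$ because the correction term $\1[v_p(y)\ge\min(a,c)]\,p^{-\max(a,c)}$ can only switch on, never off.

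Two degenerate cases fall outside your chain $r\mid pr\mid\cdots\mid r'$ and deserve one line each: $r'=0$ (immediate, since $d_1(\eta,\eta)=0$) and a negative quotient $r'/r$ (absorbed by the unit $-1\in G^\times$). Also, when no $b$ is active for a fibre $t$ the fibre is all of $G_p$, so writing it as $G_p\setminus p^{a(t)}G_p$ with $a(t)=\infty$ needs the convention $p^{\infty}G_p=\emptyset$, i.e.\ $p^{-\infty}=0$ in your identity; with that convention the formula also covers $a(t)\in\{0,\infty\}$ correctly. None of this affects the substance.
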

\noindent The proof of this Proposition is deferred to Subsection~\ref{subsec:Proof-of-Prop-Rogers}. It adapts the strategy of the proof of Rogers' Theorem as presented in \cite[Ch.~V, \S6]{Halberstam-Roth}.

Fix $r\in\Z$ and denote
\begin{equation*}
\Br=\{b\in\cB:b\mid r\}.
\end{equation*} For $b\in\cB$ let
\begin{equation*}
Q_b=b\Z\cup(r+b\Z).
\end{equation*}
We are going to estimate the density of $\cM_\cB\cup(r+\cM_\cB)=\bigcup_{b\in \cB}Q_b$.
For a finite subset $S\subseteq \cB$ set
\begin{equation*}
P_S=\bigcap_{b\in S}b\Z=\lcm(S)\cdot\Z.
\end{equation*}
The Chinese Remainder Theorem implies:
\begin{lemma}\label{lem:intersection}
$P_K\cap (r+P_{S\setminus K})\neq\emptyset$ if and only if
$\gcd(\lcm(K),\lcm(S\setminus K))\mid r$ (for finite $K\subseteq S\subset\cB$),
and in this case $d(P_K\cap(r+P_{S\setminus K}))=1/\lcm(S)$.
\end{lemma}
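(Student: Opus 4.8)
The plan is to reduce the statement to the elementary solvability criterion for a single linear congruence. Set $a:=\lcm(K)$ and $c:=\lcm(S\setminus K)$, so that $P_K=a\Z$ and $P_{S\setminus K}=c\Z$. An integer $x$ lies in $P_K\cap(r+P_{S\setminus K})$ exactly when $x\equiv 0\pmod a$ and $x\equiv r\pmod c$. Writing $x=ay$, the second condition reads $ay\equiv r\pmod c$, a linear congruence in $y$ which is solvable if and only if $\gcd(a,c)\mid r$. Since $a=\lcm(K)$ and $c=\lcm(S\setminus K)$, this is precisely the asserted criterion $\gcd(\lcm(K),\lcm(S\setminus K))\mid r$. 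This is the ``only if'' as well as the ``if'', so the first claim is complete.

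For the density in the nonempty case, I would note the standard Chinese-Remainder-type fact that the intersection of two arithmetic progressions is either empty or again an arithmetic progression: if $x_0$ is one solution, then $x\in P_K\cap(r+P_{S\setminus K})$ if and only if $x-x_0\in a\Z\cap c\Z=\lcm(a,c)\,\Z$. Hence the set in question is a single residue class modulo $\lcm(a,c)$. Now use $K\subseteq S$, so that $K$ and $S\setminus K$ together exhaust $S$, giving $\lcm(a,c)=\lcm\bigl(\lcm(K),\lcm(S\setminus K)\bigr)=\lcm\bigl(K\cup(S\setminus K)\bigr)=\lcm(S)$. A residue class modulo $\lcm(S)$ meets $\{-n,\dots,n\}$ in $(2n+1)/\lcm(S)+O(1)$ points, so its density in the two-sided sense used throughout equals $1/\lcm(S)$, which is the claim.

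There is no real obstacle here; the only things to be careful about are the bookkeeping that the two lcm's one intersects genuinely combine to $\lcm(S)$ — which is where the hypothesis $K\subseteq S$ enters — and invoking the Chinese Remainder Theorem only in the packaged form ``intersection of arithmetic progressions is empty or an arithmetic progression with modulus the lcm of the two moduli''. Both are routine, so the proof is short.
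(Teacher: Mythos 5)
Your proof is correct and is essentially the argument the paper intends: the paper states the lemma with only the remark ``The Chinese Remainder Theorem implies'' and gives no further details, and your solvability criterion for $ay\equiv r\pmod{c}$ together with the observation that the nonempty intersection is a single residue class modulo $\lcm(a,c)=\lcm(S)$ is exactly the routine verification being left to the reader. No gaps; the edge cases $K=\emptyset$ or $K=S$ (with $\lcm(\emptyset)=1$) are also covered by your argument.
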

\begin{lemma}\label{lem:disjointP}
Let $r\in\Z$ and let  $K,K'\subseteq S$ be finite subsets of $\cB$. The following are equivalent:
\begin{compactenum}[(a)]
\item $P_K\cap (r+P_{S\setminus K})\cap P_{K'}\cap (r+P_{S\setminus K'})\neq\emptyset$,
\item $K\setminus \Br=K'\setminus \Br$ and $\gcd(\lcm(K),\lcm(S\setminus K))\mid r$,
\item $P_K\cap (r+P_{S\setminus K})= P_{K'}\cap (r+P_{S\setminus K'})\neq\emptyset$
\end{compactenum}
\end{lemma}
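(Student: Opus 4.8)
The plan is to exploit the Chinese Remainder Theorem in the form already recorded in Lemma~\ref{lem:intersection}, together with the trivial observation that for $b\in\Br$ the congruence conditions ``$x\in b\Z$'' and ``$x\in r+b\Z$'' coincide (since $b\mid r$), whereas for $b\notin\Br$ they are incompatible. I would prove the cyclic chain of implications (a)$\Rightarrow$(b)$\Rightarrow$(c)$\Rightarrow$(a), the last being trivial.

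For (a)$\Rightarrow$(b): suppose $n$ lies in all four progressions. Writing everything out modulo each $b$ in $S$, membership in $P_K\cap(r+P_{S\setminus K})$ says $n\equiv 0\ (\mod b)$ for $b\in K$ and $n\equiv r\ (\mod b)$ for $b\in S\setminus K$; similarly for $K'$. For $b\in\Br$ both conditions $n\equiv0$ and $n\equiv r$ modulo $b$ are equivalent, so the constraints from $b\in\Br$ are the same regardless of whether $b$ is placed in $K$ or in $S\setminus K$; hence the only genuine information is the partition of $S\setminus\Br$. For $b\in S\setminus\Br$ we have $r\not\equiv0\ (\mod b)$, so ``$n\equiv0$'' and ``$n\equiv r$'' are mutually exclusive; since $n$ must satisfy the $K$-pattern and the $K'$-pattern simultaneously, the two patterns must agree on $S\setminus\Br$, i.e.\ $K\setminus\Br=K'\setminus\Br$. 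Finally, nonemptiness of $P_K\cap(r+P_{S\setminus K})$ is exactly $\gcd(\lcm(K),\lcm(S\setminus K))\mid r$ by Lemma~\ref{lem:intersection}, giving (b). (One should note here that once $K\setminus\Br=K'\setminus\Br$, the quantity $\gcd(\lcm(K),\lcm(S\setminus K))$ is unchanged if we move elements of $\Br$ between the two blocks, because any $b\in\Br$ divides $r$ anyway and its contribution to the gcd is harmless; this is worth spelling out so that the asymmetric-looking condition in (b) is seen to be symmetric in $K$ and $K'$.)

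For (b)$\Rightarrow$(c): assume $K\setminus\Br=K'\setminus\Br$ and the gcd condition. By the remark just made, $P_K\cap(r+P_{S\setminus K})$ and $P_{K'}\cap(r+P_{S\setminus K'})$ are both nonempty. But each of them is, by CRT, a single residue class modulo $\lcm(S)$: indeed the system of congruences ``$n\equiv0\ (\mod b)$ for $b\in K$, $n\equiv r\ (\mod b)$ for $b\in S\setminus K$'' determines $n$ uniquely modulo $\lcm(S)$ when it is consistent, because for $b\in\Br$ the two alternatives coincide and for $b\notin\Br$ the pattern is fixed by $K\setminus\Br$. Since $K$ and $K'$ induce the same pattern on $S\setminus\Br$ and identical (coinciding) constraints on $\Br$, the two residue classes are equal, which is (c). The implication (c)$\Rightarrow$(a) is immediate since the common set is nonempty.

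The only mildly delicate point — and the one I would be most careful about — is the bookkeeping showing that the condition in (b), although written in terms of $K$ alone, does not depend on how elements of $\Br$ are distributed, so that (b) is genuinely symmetric in $K$ and $K'$; everything else is a direct CRT computation. I do not expect any real obstacle beyond keeping the indexing straight.
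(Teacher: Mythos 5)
Your proposal is correct and follows essentially the same route as the paper: the cycle (a)$\Rightarrow$(b)$\Rightarrow$(c)$\Rightarrow$(a), with (a)$\Rightarrow$(b) via the same incompatibility of $n\equiv 0$ and $n\equiv r$ modulo $b\notin\Br$ plus Lemma~\ref{lem:intersection} for the gcd condition, and (b)$\Rightarrow$(c) by comparing the two congruence systems (the paper instead verifies the two inclusions elementwise, which amounts to the same CRT bookkeeping). One caveat: your parenthetical claim that $\gcd(\lcm(K),\lcm(S\setminus K))$ is \emph{unchanged} when elements of $\Br$ are redistributed between the two blocks is false as stated --- for instance $S=\{4,6,9\}$, $r=12$, $K=\{4,9\}$, $K'=\{6,9\}$ give gcds $6$ and $2$ respectively --- but this is harmless: only divisibility by $r$ (equivalently, nonemptiness) could matter, and in fact you do not need the remark at all, since your own observation that the $K$- and $K'$-systems of congruences are condition-by-condition equivalent already shows the two solution sets coincide, so nonemptiness of the $K'$-set follows from that of the $K$-set, which Lemma~\ref{lem:intersection} supplies from the gcd condition in (b).
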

\begin{proof} (a)$\Rightarrow$(b)\; Since $P_K\cap (r+P_{S\setminus K})\neq\emptyset$, we have $\gcd(\lcm(K),\lcm(S\setminus K))|r$ by Lemma~\ref{lem:intersection}.
Let $b\in K\setminus \Br$ and suppose  $b\in S\setminus K'$. Let $x\in P_K\cap
(r+P_{S\setminus K'})$. Then
$x=0 \mod b$ and $x=r\mod b$, hence $b\mid r$ in contradiction to $b\notin \Br$. Similarly, we prove that $K'\setminus \Br\subseteq K$.\\
(b)$\Rightarrow$(c)\; Since $\gcd(\lcm(K),\lcm(S\setminus K))\mid r$,
we have $P_K\cap (r+P_{S\setminus K})\neq\emptyset$ by Lemma~\ref{lem:intersection}.
Let $x\in P_K\cap (r+P_{S\setminus K})$ and $b\in K'$. If $b\in K$, then $b\mid x$. Otherwise $b\in S\setminus K$, so $b\mid x-r$, and $b\in \Br$ (by (b)), so  $b\mid x$ as well. We have proved that $x\in P_{K'}$. Similarly, we prove that $x\in r+P_{S\setminus K'}$. Therefore,
 $P_K\cap (r+P_{S\setminus K})\subseteq P_{K'}\cap (r+P_{S\setminus K'})$. The other inclusion follows analogously.\\
(c)$\Rightarrow$(a)\; is obvious.
\end{proof}

Given $r\in \Z$ and a finite $S\subseteq \cB$, let $T_r(S)$ denote the number of subsets $K\subseteq S\setminus \Br$ such that $\gcd(\lcm(K),\lcm(S\setminus K))|r$.

\begin{remark}\label{rem:Tr(S)}
If $\cB$ is pairwise coprime, then $T_r(S)=2^{|S\setminus \Br|}$ for all $r\in\Z$.
\end{remark}

\begin{lemma}\label{lem:sum}
Assume that $r\in \Z$. The density of the set $\cM_\cB\cup(r+\cM_\cB)=\bigcup_{b\in \cB}Q_b$ equals
\begin{equation*}
\sum_{\emptyset\neq S\subseteq\cB:|S|<\infty}(-1)^{|S|+1}\frac{T_r(S)}{\lcm(S)},
\end{equation*}
which, if not absolutely convergent, is interpreted as the (monotone!) limit
\begin{equation*}
\lim_{n\to\infty}\sum_{\emptyset\neq S\subseteq\cB\cap\{1,\dots,n\}}(-1)^{|S|+1}\frac{T_r(S)}{\lcm(S)}
\end{equation*}
\end{lemma}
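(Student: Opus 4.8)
The plan is to establish the formula first for an arbitrary finite subfamily $S_0\subseteq\cB$ by inclusion--exclusion, and then to pass to the increasing limit over $S_0=\cB\cap\{1,\dots,n\}$.

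\emph{The finite case.} For finite $S_0$ the set $U(S_0):=\bigcup_{b\in S_0}Q_b$ is periodic (period $\lcm(S_0)$), so its density exists and inclusion--exclusion on the ring of periodic subsets of $\Z$ gives $d(U(S_0))=\sum_{\emptyset\neq S\subseteq S_0}(-1)^{|S|+1}d\bigl(\bigcap_{b\in S}Q_b\bigr)$. I then fix a nonempty finite $S\subseteq\cB$ and distribute intersection over union: since $\bigcap_{b\in K}b\Z=P_K$ and $\bigcap_{b\in S\setminus K}(r+b\Z)=r+P_{S\setminus K}$,
\[
\bigcap_{b\in S}Q_b=\bigcup_{K\subseteq S}\bigl(P_K\cap(r+P_{S\setminus K})\bigr).
\]
Lemma~\ref{lem:disjointP} now does all the bookkeeping: any two of the sets $P_K\cap(r+P_{S\setminus K})$ are either equal or disjoint; a nonempty one coincides with $P_L\cap(r+P_{S\setminus L})$ for $L:=K\setminus\Br$ (apply (b)$\Leftrightarrow$(c) to the pair $(K,L)$, the divisibility hypothesis in (b) being furnished by Lemma~\ref{lem:intersection}, which also gives that this set is then nonempty); and for distinct $L,L'\subseteq S\setminus\Br$ the sets $P_L\cap(r+P_{S\setminus L})$ and $P_{L'}\cap(r+P_{S\setminus L'})$ are disjoint, since condition (b) fails for the pair $(L,L')$. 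By Lemma~\ref{lem:intersection}, $P_L\cap(r+P_{S\setminus L})$ is nonempty precisely when $\gcd(\lcm(L),\lcm(S\setminus L))\mid r$, and in that case it has density $1/\lcm(S)$. Hence $\bigcap_{b\in S}Q_b$ is a disjoint union of exactly $T_r(S)$ residue classes each of density $1/\lcm(S)$, so $d\bigl(\bigcap_{b\in S}Q_b\bigr)=T_r(S)/\lcm(S)$, and substituting this into inclusion--exclusion yields the claimed formula with $S_0$ in place of $\cB$.

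\emph{Passage to the infinite case.} Put $\cB_n:=\cB\cap\{1,\dots,n\}$, $U_n:=U(\cB_n)$ and $U:=\cM_\cB\cup(r+\cM_\cB)=\bigcup_n U_n$. As $U_n\uparrow U$ and each $d(U_n)$ is a genuine density with $U_n\subseteq U_{n+1}$, the numbers $d(U_n)$ are nondecreasing; this is the asserted monotonicity of the partial sums $\sum_{\emptyset\neq S\subseteq\cB_n}(-1)^{|S|+1}T_r(S)/\lcm(S)$. To identify $\lim_n d(U_n)$ with $d(U)$, I would note that $n\in U\setminus U_n$ forces $n\in\cM_\cB\setminus\cM_{\cB_n}$ or $n\in(r+\cM_\cB)\setminus(r+\cM_{\cB_n})$, so (using $\cM_{\cB_n}\subseteq\cM_\cB$) $\du(U\setminus U_n)\le 2\bigl(d(\cM_\cB)-d(\cM_{\cB_n})\bigr)$, and the right-hand side tends to $0$ by the Davenport--Erd\H{o}s theorem. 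Splitting indicator functions (legitimate since $U_n\subseteq U$) then gives $d(U_n)\le\dl(U)$ and $\du(U)\le d(U_n)+\du(U\setminus U_n)$; letting $n\to\infty$ shows that $d(U)$ exists and equals $\lim_n d(U_n)$, which is the assertion.

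\emph{Expected main obstacle.} The finite-case combinatorics is essentially routine once Lemmas~\ref{lem:intersection} and~\ref{lem:disjointP} are in hand; the one point requiring care there is that the $T_r(S)$ surviving index sets $L$ must be counted without repetition, which is exactly what the implication (b)$\Leftrightarrow$(c) guarantees. The genuinely delicate step is the passage to infinite $\cB$: one cannot argue that the ``tail'' $\cM_{\cB\setminus\cB_n}$ of multiples has negligible density --- it need not, e.g.\ it has density $1$ for every $n$ when $\cB$ is the set of primes --- so one must instead control the genuinely shrinking quantity $d(\cM_\cB\setminus\cM_{\cB_n})=d(\cM_\cB)-d(\cM_{\cB_n})$, which is precisely where the Davenport--Erd\H{o}s theorem is needed.
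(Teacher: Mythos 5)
Your proof is correct and follows essentially the same route as the paper: the same reduction of $\bigcap_{b\in S}Q_b$ to the disjoint pieces $P_K\cap(r+P_{S\setminus K})$ with $K\subseteq S\setminus\Br$ via Lemmas~\ref{lem:intersection} and~\ref{lem:disjointP}, followed by inclusion--exclusion and a monotone passage to the limit. The only difference is that you spell out the limit step (using $\du(U\setminus U_n)\le 2\bigl(d(\cM_\cB)-d(\cM_{\cB_n})\bigr)\to0$ by Davenport--Erd\H{o}s), which the paper states as ``pass to the limit'' and justifies separately in Subsection~\ref{subsec:on-densities}.
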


\begin{proof} (For finite $\cB$, in the general situation we pass to the limit).
The density of the set $\bigcup_{b\in \cB}Q_b$ is the sum of the expressions
\begin{equation*}
(-1)^{k+1}\sum_{\emptyset\neq S\subseteq \cB:|S|=k}d\left(\bigcap_{b\in S}Q_b\right),
\end{equation*}
where $k$ runs through {$\{1,\dots,\card\cB\}$}. So it is enough to prove that
\begin{equation}\label{eq:QandP}
d\left(\bigcap_{b\in S}Q_b\right)=\frac{T_r(S)}{\lcm(S)}.
\end{equation}
Note that
\begin{equation*}
\bigcap_{b\in S}Q_b=\bigcup_{K\subseteq S}P_K\cap (r+P_{S\setminus K})= \bigcup_{K\subseteq S\setminus \Br}P_K\cap (r+P_{S\setminus K});
\end{equation*}
the r.h.s equation follows from Lemma \ref{lem:disjointP} (applied with $K'=K\setminus \Br$).
By Lemma \ref{lem:intersection},
he density of each non-empty set $P_K\cap (r+P_{S\setminus K})$ equals $\frac{1}{\lcm(S)}$, and,
by Lemma \ref{lem:disjointP}, the sets $P_K\cap (r+P_{S\setminus K})$ are disjoint for different $K\subseteq S\setminus \Br$.
Hence (\ref{eq:QandP}) follows.
\end{proof}

\begin{corollary}\label{cor:indif}
If $r'=rm$ and $m$ is coprime to every element of $\cB$, then
$d_1(\eta,S^r\eta)=d_1(\eta,S^{r'}\eta)$.
\end{corollary}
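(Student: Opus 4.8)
The plan is to express both quantities through the density formula of Lemma~\ref{lem:sum} and to show that multiplying $r$ by a factor $m$ coprime to every element of $\cB$ leaves all the combinatorial coefficients $T_r(S)$ unchanged. By \eqref{eq:distance-density} we have $d_1(\eta,\sigma^r\eta)=2\big(d(\cM_\cB\cup(r+\cM_\cB))-d(\cM_\cB)\big)$, and likewise with $r$ replaced by $r'$, so it suffices to prove $d(\cM_\cB\cup(r+\cM_\cB))=d(\cM_\cB\cup(r'+\cM_\cB))$. By Lemma~\ref{lem:sum} both densities are the monotone limit over $n$ of $\sum_{\emptyset\neq S\subseteq\cB\cap\{1,\dots,n\}}(-1)^{|S|+1}\,T_\bullet(S)/\lcm(S)$; hence it is enough to show that $T_r(S)=T_{r'}(S)$ for every finite $S\subseteq\cB$, since then the $n$-th partial sums coincide term by term.

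First I would record two consequences of $\gcd(m,b)=1$ for all $b\in\cB$. \textbf{(i)} The set $\Br=\{b\in\cB:b\mid r\}$ coincides with $\{b\in\cB:b\mid r'\}$: if $b\mid r$ then $b\mid rm=r'$, and conversely $b\mid rm$ together with $\gcd(b,m)=1$ forces $b\mid r$. In particular the family of candidate sets $\{K\subseteq S\setminus\Br\}$ entering the definition of $T_r(S)$ is literally the same as the one for $T_{r'}(S)$. \textbf{(ii)} For every $K\subseteq S\subseteq\cB$ the integer $g:=\gcd(\lcm(K),\lcm(S\setminus K))$ is coprime to $m$: indeed $g\mid\lcm(K)$ and $\lcm(K)\mid\prod_{b\in K}b$, so every prime dividing $g$ divides some $b\in K\subseteq\cB$ and is therefore coprime to $m$; thus $\gcd(g,m)=1$ (with the conventions $\lcm(\emptyset)=1$ and $P_\emptyset=\Z$, which also covers $K=\emptyset$ and $K=S$).

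Now fix a finite $S\subseteq\cB$ and a subset $K\subseteq S\setminus\Br$, and set $g=\gcd(\lcm(K),\lcm(S\setminus K))$. If $g\mid r$ then $g\mid rm=r'$; conversely, if $g\mid r'=rm$ then, since $\gcd(g,m)=1$ by (ii), we obtain $g\mid r$. Hence the selection condition ``$g\mid r$'' picks out exactly the same subsets $K$ as ``$g\mid r'$'', so that $T_r(S)=T_{r'}(S)$. Feeding this into Lemma~\ref{lem:sum} gives $d(\cM_\cB\cup(r+\cM_\cB))=d(\cM_\cB\cup(r'+\cM_\cB))$, and \eqref{eq:distance-density} then yields $d_1(\eta,\sigma^r\eta)=d_1(\eta,\sigma^{r'}\eta)$.

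There is no genuine obstacle here; the only points requiring care are the standard $\lcm$ conventions and the observation that the prime support of $\lcm(K)$ is contained in the union of the prime supports of the elements of $K$, which is precisely what makes step (ii) work. As an alternative, one could bypass the coefficients $T_r(S)$ and compare the densities of $\bigcap_{b\in S}(b\Z\cup(r+b\Z))$ and $\bigcap_{b\in S}(b\Z\cup(r'+b\Z))$ directly via Lemmas~\ref{lem:intersection} and~\ref{lem:disjointP}, but the argument through $T_r(S)$ is the most economical.
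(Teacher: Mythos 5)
Your proof is correct and follows essentially the same route as the paper: reduce via \eqref{eq:distance-density} to comparing $d(\cM_\cB\cup(r+\cM_\cB))$ with $d(\cM_\cB\cup(r'+\cM_\cB))$, then apply Lemma~\ref{lem:sum} after checking that replacing $r$ by $r'=rm$ changes neither $\Br$ nor the condition $\gcd(\lcm(K),\lcm(S\setminus K))\mid r$, so that $T_r(S)=T_{r'}(S)$. The paper states only the divisibility equivalence and leaves the coprimality argument and the identity of $\Br$ implicit; your write-up simply makes these details explicit.
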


\begin{proof}
The equivalent assertion $d(\cM_{\cB}\cup(\cM_{\cB}+r))=d(\cM_{\cB}\cup(\cM_{\cB}+r'))$ (see \eqref{eq:distance-density}) follows from Lemma~\ref{lem:sum}, because, under the assumptions of the corollary, $\gcd(\lcm(K),\lcm(S\setminus K))|r$ if and only if $\gcd(\lcm(K),\lcm(S\setminus K))|r'$.
\end{proof}

\begin{corollary}\label{cor:bar}
If $\cB$ is pairwise coprime and  $0\neq r\in\Z$, then
$d_1(\eta,\sigma^r\eta)=d_1(\eta,\sigma^{r'}\eta)$,
where $r':=\lcm(\Br)$.
\end{corollary}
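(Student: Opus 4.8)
The plan is to reduce the statement to the combinatorial density formula of Lemma~\ref{lem:sum}. By \eqref{eq:distance-density} it suffices to show $d(\cM_\cB\cup(r+\cM_\cB))=d(\cM_\cB\cup(r'+\cM_\cB))$, and by Lemma~\ref{lem:sum} each of these densities equals the same kind of (monotone, if not absolutely convergent) limit of sums $\sum_{\emptyset\neq S\subseteq\cB,\,|S|<\infty}(-1)^{|S|+1}T_\bullet(S)/\lcm(S)$. Hence everything comes down to proving $T_r(S)=T_{r'}(S)$ for every finite $S\subseteq\cB$.

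The first step is the elementary number-theoretic observation that $\Br=\cB_{|r'}$, i.e.\ that for $b\in\cB$ one has $b\mid r$ if and only if $b\mid r'=\lcm(\Br)$ (the hypothesis $r\neq0$ guarantees that $\Br$ is finite, so $r'$ is a genuine integer). The forward direction is trivial: $b\mid r$ gives $b\in\Br$, hence $b\mid\lcm(\Br)=r'$. For the converse I would argue: if $b\mid r'$ but $b\notin\Br$, then $b$ is distinct from, and by pairwise coprimality of $\cB$ coprime to, every element of $\Br$, hence coprime to $\lcm(\Br)=r'$; combined with $b\mid r'$ this forces $b=1$, contradicting $b\in\cB\subseteq\N\setminus\{1\}$.

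Given $\Br=\cB_{|r'}$, the conclusion follows at once: since $\cB$ is pairwise coprime, Remark~\ref{rem:Tr(S)} yields $T_r(S)=2^{|S\setminus\Br|}=2^{|S\setminus\cB_{|r'}|}=T_{r'}(S)$ for every finite $S\subseteq\cB$ (in particular for every $S\subseteq\cB\cap\{1,\dots,n\}$). Feeding this term-by-term equality into Lemma~\ref{lem:sum} gives the equality of the two densities — the passage to the monotone limit in the infinite-$\cB$ case is automatic since the partial sums already agree for each $n$ — whence $d_1(\eta,\sigma^r\eta)=d_1(\eta,\sigma^{r'}\eta)$ by \eqref{eq:distance-density}. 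The only slightly delicate point is the divisibility argument of the second paragraph; everything else is bookkeeping. (Note that Corollary~\ref{cor:indif} does not apply directly here, since $r/r'$ need not be coprime to $\cB$, e.g.\ $\cB=\{2,3\}$, $r=12$, $r'=6$.)
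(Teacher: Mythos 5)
Your proof is correct and follows essentially the same route as the paper: reduce via \eqref{eq:distance-density} to the equality of densities, observe that $\Br=\{b\in\cB:b\mid r'\}$, and note that pairwise coprimality makes the condition $\gcd(\lcm(K),\lcm(S\setminus K))\mid r$ automatic, so the terms in Lemma~\ref{lem:sum} coincide. The only difference is cosmetic: you spell out the divisibility fact $\Br=\cB_{|r'}$ and invoke Remark~\ref{rem:Tr(S)} explicitly, whereas the paper asserts the former and cites Lemma~\ref{lem:intersection} directly.
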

\begin{proof}
We shall prove the equivalent statement
$d(\cM_{\cB}\cup(\cM_{\cB}+r))=d(\cM_{\cB}\cup(\cM_{\cB}+r'))$, see \eqref{eq:distance-density}.
Note that
$\Br=\{b\in \cB:b\mid r\}=\{b\in \cB:b\mid r'\}$.
The assertion follows by  Lemma \ref{lem:intersection} and Lemma \ref{lem:sum} since $\gcd(\lcm(K),\lcm(S\setminus K))=1$ for every $K\subseteq S$.
\end{proof}

\begin{corollary}\label{cor:formula-G}
If the elements of $\cB$ are pairwise coprime, then
\begin{equation}\label{eq:coprime-formula}
d\left(\cM_\cB\cup(r+\cM_\cB)\right)
=
1-d(\cF_\cB)\cdot\prod_{b\in \cB\setminus \Br}\left(1-\frac{1}{b-1}\right)
\end{equation}
and
\begin{equation}\label{eq:coprime-distance}
d_1(\eta,\sigma^r\eta)
=
2\,d(\cF_\cB)\cdot\left(1-\prod_{b\in \cB\setminus \Br}\left(1-\frac{1}{b-1}\right)\right)
\end{equation}
for every $r\in\Z$.
\end{corollary}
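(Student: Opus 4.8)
The plan is to substitute the pairwise-coprime data into the inclusion–exclusion formula of Lemma~\ref{lem:sum} and recognize the resulting alternating sum as an infinite product. Since $\cB$ is pairwise coprime, $\lcm(S)=\prod_{b\in S}b$ for every finite $S\subseteq\cB$, and by Remark~\ref{rem:Tr(S)} we have $T_r(S)=2^{|S\setminus\Br|}$. Feeding both into Lemma~\ref{lem:sum} and splitting each finite $S$ into $S\cap\Br$ and $S\setminus\Br$, the summand becomes $(-1)^{|S|+1}\prod_{b\in S\cap\Br}\frac1b\prod_{b\in S\setminus\Br}\frac2b$. For a \emph{finite} alphabet $\cB$ the sum (with the empty set added back, which contributes $1$) factorizes exactly as $-\bigl(\prod_{b\in\Br}(1-\frac1b)\prod_{b\in\cB\setminus\Br}(1-\frac2b)-1\bigr)$, so that
\[
d\bigl(\cM_\cB\cup(r+\cM_\cB)\bigr)=1-\prod_{b\in\Br}\Bigl(1-\frac1b\Bigr)\prod_{b\in\cB\setminus\Br}\Bigl(1-\frac2b\Bigr).
\]
For general (infinite) $\cB$ I would run exactly this finite computation for the truncations $\cB_n:=\cB\cap\{1,\dots,n\}$ — note $\Br$ is finite when $r\neq0$, so $\Br\subseteq\cB_n$ for large $n$, while for $r=0$ one has $\Br=\cB$ and $\cB\setminus\Br=\emptyset$ — and then pass to the limit $n\to\infty$ using the monotone-limit interpretation built into Lemma~\ref{lem:sum}; this is the only place where the possible lack of absolute convergence matters, and it is precisely what that formulation is designed to absorb.

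To convert this into \eqref{eq:coprime-formula} I would replace the two products by $d(\cF_\cB)$ times a single product over $\cB\setminus\Br$, using the identity $d(\cF_\cB)=\prod_{b\in\cB}(1-\frac1b)$. This identity is itself the $r=0$ instance of the formula just derived: for $r=0$ one has $\Br=\cB$, $T_0(S)=2^0=1$, and $d(\cM_\cB\cup\cM_\cB)=d(\cM_\cB)=1-\prod_{b\in\cB}(1-\frac1b)$. Dividing, $\prod_{b\in\Br}(1-\frac1b)\prod_{b\in\cB\setminus\Br}(1-\frac2b)=d(\cF_\cB)\prod_{b\in\cB\setminus\Br}\frac{1-2/b}{1-1/b}=d(\cF_\cB)\prod_{b\in\cB\setminus\Br}\frac{b-2}{b-1}=d(\cF_\cB)\prod_{b\in\cB\setminus\Br}\bigl(1-\frac1{b-1}\bigr)$, which gives \eqref{eq:coprime-formula}. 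In the degenerate case $\sum_{b\in\cB}\frac1b=\infty$ one has $d(\cF_\cB)=0$ and $d(\cM_\cB)=1$, so both \eqref{eq:coprime-formula} and \eqref{eq:coprime-distance} hold by direct inspection; hence one may assume $d(\cF_\cB)>0$, which makes all the products above converge and the division legitimate.

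Finally, \eqref{eq:coprime-distance} follows mechanically from \eqref{eq:distance-density}: since $d(\cM_\cB)=1-d(\cF_\cB)$,
\[
d_1(\eta,\sigma^r\eta)=2\bigl(d(\cM_\cB\cup(r+\cM_\cB))-d(\cM_\cB)\bigr)=2\Bigl(d(\cF_\cB)-d(\cF_\cB)\prod_{b\in\cB\setminus\Br}\bigl(1-\tfrac1{b-1}\bigr)\Bigr),
\]
which is the claimed expression. I expect the only real obstacle to be the legitimacy of factoring a not-necessarily-absolutely-convergent series into a product; everything else is bookkeeping of finite inclusion–exclusion and an elementary Euler-type product manipulation.
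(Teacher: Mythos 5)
Your argument is correct and follows essentially the same route as the paper's proof: substitute $T_r(S)=2^{|S\setminus\Br|}$ and $\lcm(S)=\prod_{b\in S}b$ into Lemma~\ref{lem:sum}, factor the inclusion--exclusion sum over $S\cap\Br$ and $S\setminus\Br$ into the product $\prod_{b\in\Br}(1-\tfrac1b)\prod_{b\in\cB\setminus\Br}(1-\tfrac2b)$ for the truncations $\cB_n$, pass to the limit via the monotone interpretation built into the lemma, and deduce \eqref{eq:coprime-distance} from \eqref{eq:distance-density}. Your extra remarks (the Euler-product identity $d(\cF_\cB)=\prod_{b\in\cB}(1-\tfrac1b)$ as the $r=0$ case, and the degenerate case $\sum_{b\in\cB}1/b=\infty$) only make explicit what the paper leaves implicit in its final limit step.
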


\begin{proof}
Denote $\cB_n=\cB\cap\{1,\ldots,n\}$ and assume that $\Br\subseteq\cB_n$.
By Lemma~\ref{lem:sum} and Remark~\ref{rem:Tr(S)},
\begin{equation*}
\begin{split}
&d(\cM_{\cB_n}\cup(r+\cM_{\cB_n}))\\
&=
\sum_{\emptyset\neq S\subseteq\cB_n}(-1)^{|S|+1}\frac{2^{|S\setminus \Br|}}{\lcm(S)}\\
&=
\sum_{\emptyset\neq S\subseteq \Br}(-1)^{|S|+1}\frac{1}{\lcm(S)}\\
&\qquad+
\sum_{S'\subseteq \Br}\hspace*{0.5cm}\sum_{\emptyset\neq S\subseteq\cB_n\setminus \Br}(-1)^{|S|+|S'|+1}\frac{2^{|S|}}{\lcm(S')\cdot\lcm(S)}\\
&=
1-\prod_{b\in \Br}\left(1-\frac{1}{b}\right)
+\prod_{b\in \Br}\left(1-\frac{1}{b}\right)\cdot\left(1-\prod_{b\in\cB_n\setminus \Br}\left(1-\frac{2}{b}\right)\right)\\
&=
1-d(\cF_{\cB_n})\cdot\prod_{b\in \cB_n\setminus \Br}\left(1-\frac{1}{b-1}\right).
\end{split}
\end{equation*}
In the limit $n\to\infty$ this yields \eqref{eq:coprime-formula}, and \eqref{eq:coprime-distance} follows now from \eqref{eq:distance-density}.
\end{proof}

\begin{corollary}\label{coro:Erdos-distances}
Let $\cB=\{b_i:i>0\}$ where $b_1<b_2<\dots$ are pairwise coprime,
$S_n=\{b_1,\dots,b_n\}$, $\ell_n=\lcm(S_n)$, and $\eta=\eta_\cB$. Then
\begin{equation*}
\begin{split}
\min_{1\le r\le\ell_n}\frac{d_1(\eta,S^r\eta)}{2\,d(\cF_\cB)}
&=
1-\max_{1\le r\le\ell_n}\prod_{b_i\not\mid r}\left(1-\frac{1}{b_i-1}\right)\\
&\ge
1-\prod_{b_i\not\mid\ell_n}\left(1-\frac{1}{b_i-1}\right)
=
\frac{d_1(\eta,S^{\ell_n}\eta)}{2\,d(\cF_\cB)}.
\end{split}
\end{equation*}
\end{corollary}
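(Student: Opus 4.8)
The plan is to reduce the whole statement to Corollary~\ref{cor:formula-G} together with one monotonicity observation. Write $a_i := 1 - \frac{1}{b_i-1}\in[0,1]$. By formula~\eqref{eq:coprime-distance}, for every $r$ one has $\frac{d_1(\eta,\sigma^r\eta)}{2\,d(\cF_\cB)} = 1 - \prod_{b_i\nmid r}a_i$, the product running over all of $\cB$; minimizing over $1\le r\le\ell_n$ gives at once the first displayed equality. Since $\cB$ is pairwise coprime, $\ell_n = b_1\cdots b_n$ and $\gcd(b_i,\ell_n)=1$ for $i>n$, so $b_i\nmid\ell_n$ exactly when $i>n$; feeding $r=\ell_n$ into~\eqref{eq:coprime-distance} yields the last displayed equality, and $\prod_{b_i\nmid\ell_n}a_i=\prod_{i>n}a_i$. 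Thus everything reduces to proving
$$\prod_{b_i\nmid r}a_i \;\le\; \prod_{i>n}a_i \qquad\text{for every } 1\le r\le\ell_n,$$
which, since $r=\ell_n$ is itself admissible, is actually an equality: the minimum of $d_1(\eta,\sigma^r\eta)$ over that range is attained at $r=\ell_n$.

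To prove the inequality, fix $r$ and set $T:=\Br=\{b\in\cB:b\mid r\}$ and $m:=\card T$. Because $\cB$ is pairwise coprime, $\prod_{b\in T}b=\lcm(T)$ divides $r$, so $\prod_{b\in T}b\le r\le\ell_n=\prod_{i=1}^n b_i$. On the other hand $\{b_1,\dots,b_m\}$ are the $m$ smallest elements of $\cB$, so writing $T=\{b_{k_1}<\dots<b_{k_m}\}$ we have $k_j\ge j$, hence $b_{k_j}\ge b_j$; this gives $\prod_{b\in T}b\ge\prod_{i=1}^m b_i$, and together with the previous bound it forces $m\le n$ (each $b_i\ge2$). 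The same inequalities $b_{k_j}\ge b_j$, combined with the fact that $x\mapsto 1-\frac1{x-1}$ is increasing, give $a_{b_{k_j}}\ge a_j$, whence $\prod_{b\in T}a_b\ge\prod_{j=1}^m a_j\ge\prod_{j=1}^n a_j$ (the last step using $m\le n$ and $a_j\le1$). Now $U\mapsto\prod_{b\in U}(1-\frac1{b-1})$ is multiplicative over disjoint unions of subsets of $\cB$, so from the two partitions $\cB=T\sqcup(\cB\setminus T)$ and $\cB=\{b_1,\dots,b_n\}\sqcup\{b_i:i>n\}$ one obtains $\prod_{b\in\cB\setminus T}a_b\le\prod_{i>n}a_i$ by cancelling the common factor $\prod_{b\in\cB}a_b$ — legitimate whenever $d(\cF_\cB)>0$ (so $\sum_b 1/b<\infty$) and no $a_b$ vanishes, i.e. $2\notin\cB$.

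The only genuine nuisance is the degenerate cases, to be handled at the end. If $d(\cF_\cB)=0$ every quantity is $0$ and there is nothing to prove. If $2\in\cB$, then $b_1=2$, $a_1=0$, and by pairwise coprimality every other element of $\cB$ is odd; for odd $r$ the product $\prod_{b_i\nmid r}a_i$ contains the zero factor $a_1$, so the inequality is trivial, while for even $r$ one removes the element $2$ from $\cB$, $\Br$ and $r$ simultaneously and runs the argument above for $\cB\setminus\{2\}$ (whose first $n-1$ elements are $b_2,\dots,b_n$, of $\lcm$ equal to $\ell_n/2\ge r/2$). I expect this bookkeeping, not any real mathematical difficulty, to be the only delicate point; the heart of the matter is the elementary remark that $\{b_1,\dots,b_n\}$ simultaneously minimizes $\prod b_i$ and — since $b\mapsto 1-\frac1{b-1}$ is increasing — also minimizes $\prod(1-\frac1{b_i-1})$ among all $n$-element subsets of $\cB$, whereas any $\Br$ arising from $r\le\ell_n$ has at most $n$ elements and product at most $\ell_n$.
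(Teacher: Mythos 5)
Your proof is correct and follows essentially the same route as the paper: both deduce the identities from Corollary~\ref{cor:formula-G} and prove the inequality from the observation that at most $n$ elements of $\cB$ divide any $r\le\ell_n$, each at least as large as the corresponding $b_j$, combined with the monotonicity of $b\mapsto 1-\frac{1}{b-1}$. You merely make explicit the passage from the product over $\{b: b\mid r\}$ to the complementary product (including the degenerate cases $2\in\cB$ and $d(\cF_\cB)=0$), which the paper leaves implicit, and you correctly note that the stated inequality is in fact an equality since $r=\ell_n$ is admissible.
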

\begin{proof}
The identities follow from Corollary~\ref{cor:formula-G},
and the inequality is due to the following observation: As $r<\ell_n$, there are at most $n$ numbers $b'_i$ for which $b'_i\mid r$. Hence
\begin{equation*}
\prod_{b_i\mid\ell_n}\left(1-\frac{1}{b_i-1}\right)
\le
\prod_{b'_i\mid r}\left(1-\frac{1}{b'_i-1}\right).
\end{equation*}
\end{proof}

\subsection{Estimates for $\cN_\epsilon(\cO(\tilde{\eta}))$}\label{subsec:covering}

For $r\in\Z$ let
$\epsilon_r=d_1(\eta,\sigma^{r}\eta)$, and for finite $S\subset\cB$ denote $\ell_S=\lcm(S)$ and $\epsilon_S=\epsilon_{\ell_S}$.

\begin{lemma}[Upper estimate]\label{lemma:upper-estimate}
Assume that $\cB$ is a Besicovitch set and that
$S\subset\cB$ is finite.
\begin{compactenum}[a)]
\item $\epsilon_S=\sup\{d_1(\eta,\sigma^r\eta): r\in\ell_S\Z\}=
2\,d(\cM_\cB\setminus(\ell_S+\cM_\cB))$.
\item $\cN_{M\epsilon_S}(\cO(\tilde{\eta}))\le\ell_S$ for each $M>1$.
\end{compactenum}
\end{lemma}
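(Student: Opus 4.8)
The plan is to read off both statements from the Rogers-type monotonicity of $r\mapsto d_1(\eta,\sigma^r\eta)$ proved in Proposition~\ref{prop:like-Rogers}, combined with the density formula~\eqref{eq:distance-density}. For part~a) the key observation is that $\ell_S$ divides every element of $\ell_S\Z$, so monotonicity under divisibility forces $d_1(\eta,\sigma^r\eta)\le d_1(\eta,\sigma^{\ell_S}\eta)$ for all $r\in\ell_S\Z$; since $\ell_S\in\ell_S\Z$, the supremum is attained at $r=\ell_S$, which is exactly $\epsilon_S=\epsilon_{\ell_S}$. For part~b) I would cover $\cO(\tilde{\eta})$ by the $\ell_S$ points $\tilde{\eta},\tsigma\tilde{\eta},\dots,\tsigma^{\ell_S-1}\tilde{\eta}$ and control the covering radius via part~a).

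In detail for a): By Lemma~\ref{lemma:d1-formula-B-free} and~\eqref{eq:distance-density} the quantity $d_1(\eta,\sigma^r\eta)=d(\cM_\cB\triangle(r+\cM_\cB))$ is well defined for every $r\in\Z$, and Proposition~\ref{prop:like-Rogers} gives $d_1(\eta,\sigma^{r'}\eta)\le d_1(\eta,\sigma^r\eta)$ whenever $r\mid r'$. Applying this with $r=\ell_S$ and arbitrary $r'\in\ell_S\Z$ yields $\sup\{d_1(\eta,\sigma^r\eta):r\in\ell_S\Z\}=d_1(\eta,\sigma^{\ell_S}\eta)=\epsilon_S$, the first identity. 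For the second identity I would split $\cM_\cB\triangle(\ell_S+\cM_\cB)$ into the two disjoint pieces $\cM_\cB\setminus(\ell_S+\cM_\cB)$ and $(\ell_S+\cM_\cB)\setminus\cM_\cB$. Each piece has a density, because the densities of $\cM_\cB$ (Besicovitch hypothesis) and of $\cM_\cB\cup(\ell_S+\cM_\cB)$ (Lemma~\ref{lem:sum}) both exist, and the two densities coincide: translating by $-\ell_S$ and then applying $n\mapsto-n$ — both density-preserving — carries $(\ell_S+\cM_\cB)\setminus\cM_\cB$ onto $\cM_\cB\setminus(\ell_S+\cM_\cB)$ since $\cM_\cB=-\cM_\cB$. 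Hence $\epsilon_S=d(\cM_\cB\triangle(\ell_S+\cM_\cB))=2\,d(\cM_\cB\setminus(\ell_S+\cM_\cB))$.

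For b): fix $M>1$ and first suppose $\epsilon_S>0$. Given $n\in\Z$, write $n=q\ell_S+j$ with $q\in\Z$ and $0\le j<\ell_S$. Using that $\tsigma$ is a $\td_1$-isometry and $\td_1(\tx,\ty)=d_1(x,y)$,
\[
\td_1(\tsigma^n\tilde{\eta},\tsigma^j\tilde{\eta})=d_1(\eta,\sigma^{q\ell_S}\eta)\le\sup\{d_1(\eta,\sigma^r\eta):r\in\ell_S\Z\}=\epsilon_S<M\epsilon_S,
\]
where the penultimate equality is part~a). Thus $\tsigma^n\tilde{\eta}\in B_{\td_1}(\tsigma^j\tilde{\eta},M\epsilon_S)$, and as $j$ ranges over $\{0,\dots,\ell_S-1\}$ these $\ell_S$ balls, all centered on $\cO(\tilde{\eta})$, cover $\cO(\tilde{\eta})$; hence $\cN_{M\epsilon_S}(\cO(\tilde{\eta}))\le\ell_S$. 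If $\epsilon_S=0$, then by part~a) $d_1(\eta,\sigma^r\eta)=0$ for every $r\in\ell_S\Z$, so $\tsigma^{\ell_S}\tilde{\eta}=\tilde{\eta}$ and $\cO(\tilde{\eta})=\{\tilde{\eta},\dots,\tsigma^{\ell_S-1}\tilde{\eta}\}$ has at most $\ell_S$ points, so the bound is trivial.

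The only substantial ingredient is Proposition~\ref{prop:like-Rogers}, whose proof (the Rogers-type argument deferred to Subsection~\ref{subsec:Proof-of-Prop-Rogers}) is the genuinely hard part; granting it, the present lemma is essentially bookkeeping with densities of the symmetric set $\cM_\cB$. The only point needing a little care is the existence of densities for the two halves of the symmetric difference, which I would justify as above from the Besicovitch hypothesis and Lemma~\ref{lem:sum}.
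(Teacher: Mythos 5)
Your proposal is correct and follows essentially the same route as the paper: Proposition~\ref{prop:like-Rogers} gives the first identity in a), the identity \eqref{eq:distance-density} (plus translation invariance of density) gives the second, and b) is the covering of $\cO(\tilde\eta)$ by the $\ell_S$ balls centered at $\tsigma^j\tilde\eta$. Your extra detour via the reflection $n\mapsto -n$ for the second identity is unnecessary (the decomposition $\cM_\cB\cup(\ell_S+\cM_\cB)=(\ell_S+\cM_\cB)\sqcup(\cM_\cB\setminus(\ell_S+\cM_\cB))$ and $d(\ell_S+\cM_\cB)=d(\cM_\cB)$ suffice), and your aside on the degenerate case $\epsilon_S=0$ is a harmless boundary convention the paper also leaves implicit.
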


\begin{proof}
The second equality in a) follows from \eqref{eq:distance-density}. Next
let $r\in\Z$. There are $j\in\{1,\dots,\ell_S\}$ and $k\in\Z$ such that $r=j+k\ell_S$. Hence, observing the $S$-invariance of $d_1$ and Proposition~\ref{prop:like-Rogers},
\begin{equation*}
\td_1(\tsigma^r\tilde\eta,\tsigma^j\tilde\eta)=d_1(\sigma^r\eta,\sigma^j\eta)=d_1(\sigma^{k\ell_S}\eta,\eta)\le
d_1(\sigma^{\ell_S}\eta,\eta)
=\epsilon_S.
\end{equation*}
This implies the first equality in a) and also $\cO(\tilde{\eta})\subset\bigcup_{j=1}^{\ell_S}B_{\td_1}(\tsigma^j\tilde{\eta},M\epsilon_S)$, i.e.~assertion b).
\end{proof}

\begin{remark}
For a Besicovitch set $\cB=\{b_1,b_2,\dots\}$ let $S_n:=\{b_1,\dots,b_n\}$. Then Lemma~\ref{lemma:upper-estimate} shows that $\ac(\cO(\eta))\le\limsup_{n\to\infty}\frac{\log\ell_{S_{n+1}}}{-\log d(\cM_\cB\setminus(\ell_{S_n}+\cM_\cB))}$, where $d(\cM_\cB\setminus(\ell_{S_n}+\cM_\cB))\le d(\cM_\cB\setminus \cM_{S_n})\to0$. 
For Toeplitz sequences $\eta$, for which $\ac(\cO(\eta))=\ac(\overline{\cO(\eta)})$ by Remark~\ref{remark:minimal-case},
this is conceptually sharper than the upper estimate from \cite[Thm.~1.6]{FGJ-2016}, which has the same form but with $d(\cM_{\{\gcd(b,\ell_{S_n})\ :\ b\in\cB\}}\setminus\cM_{S_n})$ instead of $d(\cM_\cB\setminus(\ell_{S_n}+\cM_\cB))$ in the denominator (see also \cite[Prop.~3.7]{Dymek-Kasjan-Keller-2021}), and this density tends to zero if and only if~$\eta$ is a regular Toeplitz sequence. However, in our Example~\ref{ex:Toeplitz-invariant} below, both approaches yield the same upper estimate.
\end{remark}

For $r\in\N$ and finite $S\subset\cB$ let $\underline{\epsilon}_r:=\min\{d_1(\eta,\sigma^j\eta): 1\le j\le r\}$ and $\underline{\epsilon}_S:=\underline{\epsilon}_{\ell_S}$.

\begin{lemma}[Lower estimates]\label{lemma:lower-estimate}
\begin{compactenum}[a)]
\item Let $r\in\N$. Then $\cN_{\underline{\epsilon}_r}(\cO(\eta))\ge r$. In particular, $\cN_{\underline{\epsilon}_S}(\cO(\eta))\ge \ell_S$.
\item Let $r,r'\in\N$. Then $\underline{\epsilon}_r\ge d_1(\eta,\sigma^{r'}\eta)$, provided $\lcm\{1,\ldots,r\}\mid r'$. If, in addition, $\cB$ is coprime, it is enough to assume that $r'$ is divisible by every $b\in\cB$ that divides  $\lcm\{1,\ldots,r\}$.
\item If $\cB$ is pairwise coprime and $S=\cB\cap\{1,\dots,N\}$, then $\underline{\epsilon}_S= \epsilon_{S}$.
\end{compactenum}
\end{lemma}

\begin{proof}
a)
\;No open(!) ball $B_{\td_1}(\tsigma^i\tilde\eta,\underline{\epsilon}_r)$ can contain more than one of the points $\tsigma^j{\tilde\eta}$ $(j=1,\dots,r)$.\\
b)
\;The general statement follows from Proposition \ref{prop:like-Rogers}, whereas for the coprime case we use also Corollary \ref{cor:bar}.\\
c)
\;This follows from Corollary~\ref{coro:Erdos-distances}.
\end{proof}

\subsection{Toeplitz examples}\label{subsec:Toeplitz-examples}

The class of all Toeplitz $\cB$-free shifts is very large and varied, and there is little hope to find a general method to determine their Besicovitch covering numbers. So we content ourselves with a class of examples studied in \cite{Dymek2017}, and later in \cite{Dymek-Kasjan-Keller-2021}.

\begin{example}[The basic Toeplitz example]
Let $\cC=\{c_i:i>0\}$ where $3\le c_1<c_2<\dots$ is a sequence of pairwise coprime, odd integers, and let $\cB=\{b_i=2^{r_i}c_i:i>0\}$ with integers $1< r_1< r_2<\dots$. Then $\eta=\eta_\cB$ is a regular Toeplitz sequence, so the (usual) orbit closure of $\eta$ is minimal, has topological entropy zero, is indeed an isomorphic extension of its maximal equicontinuous factor, and has a trivial automorphism group, see \cite{BKKL2015,Dymek2017,KKL2016,Dymek-Kasjan-Keller-2021} for these and related results. Here we determine the covering numbers $\cN_\epsilon(\cO(\tilde{\eta}))$.

Denote $I_n=\{1,\dots, n\}$ and $S_I=\{b_i:i\in I\}$, $C_I=\{c_i:i\in I\}$ for finite $I\subset\N=\{1,2,3,\dots\}$, and abbreviate  $S_n=S_{I_n}$ and $C_n=C_{I_n}$. Then $\ell_n=\lcm(S_n)=2^{r_n}\lcm(C_n)$.
Observe that
\begin{equation}\label{eq:M_n-def}
M_n:=\max_{1\le \jm\le n}\left(\frac{c_{\jm+1}}{c_\jm}-1\right)^{-1}\le c_n.
\end{equation}
We will show below that
\begin{equation}\label{eq:underline-eps-bound}
\epsilon_{\ell_n}
=
2\cdot {d(\cM_\cB\setminus\cM_{S_n})} 
\;\text{ and }\; \underline{\epsilon}_{\ell_n}\ge M_n^{-1}\epsilon_{\ell_n}\ge c_n^{-1}\epsilon_{\ell_n},
\end{equation}
so that, in view of Lemmas~\ref{lemma:upper-estimate} and~\ref{lemma:lower-estimate},
\begin{equation}\label{eq:Neps-bounds-Toeplitz}
\cN_{2\epsilon_{\ell_n}}(\cO(\eta))\le\ell_n\le \cN_{c_n^{-1}\epsilon_{\ell_n}}(\cO(\eta)).
\end{equation}
In the next example we will use these estimates to determine the amorphic complexity of such systems for some particular choices of the numbers $r_i$ and $c_i$.\footnote{\eqref{eq:underline-eps-bound} together with \eqref{eq:Neps-bounds-Toeplitz} shows that the  upper estimate of the amorphic complexity for regular Toeplitz sequences from \cite[Thm.~1.6]{FGJ-2016} is sharp for our class of examples. }

We turn to the proof of \eqref{eq:underline-eps-bound}.
Let $0<\ell\in\Z$, $\ell=2^r s$ for some odd $s\in\Z$, denote
$J=\{i\in\N:b_i\mid \ell\}=\{i\in\N: r_i\le r,\ c_i\mid s\}$
and $k=\max\{i\in\N: r_i\le r\}$.
Observe that $S_J=\Bl=\{b\in\cB: b\mid\ell\}$ and $J\subseteq I_k$, but $J$ need not be an interval.
Recall from Lemma~\ref{lem:sum} that
\begin{equation}\label{eq:basic-sum-for toeplitz}
d(\cM_\cB\cup(\ell+\cM_\cB))
=
\sum_{\emptyset\neq I\subseteq\N:|I|<\infty}(-1)^{|I|+1}\frac{T_\ell(S_I)}{\lcm(S_I)},
\end{equation}
with
\begin{equation*}
\begin{split}
T_\ell(S)
&=
\card\{K\subseteq S\setminus \Bl: \gcd(\lcm(K),\lcm(S\setminus K))\mid \ell\}\\
&=
\card\{K\subseteq S\setminus S_J: K\subseteq S_k\text{ or }S\setminus K\subseteq S_k\}\\
&=
\begin{cases}
\card\{K\subseteq S\setminus S_J\}&\text{ if }S\subseteq S_k\\
\card\{K\subseteq S\setminus S_J: K\subseteq S_k\}
+\card\{K\subseteq S\setminus S_J: S\setminus S_k\subseteq K\}&\text{ if }S{\not\subseteq} S_k
\end{cases}\\
&=
\begin{cases}
2^{|S\setminus S_J|}&\text{ if }S\subseteq S_k\\
2\cdot 2^{|S\cap S_k\setminus S_J|}&\text{ if }S{\not\subseteq} S_k,
\end{cases}
\end{split}
\end{equation*}
where the infinite sum has to be taken with some care, see Lemma~\ref{lem:sum}.

We split the sum in \eqref{eq:basic-sum-for toeplitz} into two parts, where we always understand that sets $I,I'$ etc. are finite sets:
\begin{equation*}
\begin{split}
&d(\cM_\cB\cup(\ell+\cM_\cB))\\
&=
\sum_{\emptyset\neq I\subseteq I_k}(-1)^{|I|+1}\frac{2^{|S_I\setminus S_J|}}{\lcm(S_I)}
\;+\;\sum_{I'\subseteq I_k}(-1)^{|I'|}\frac{2^{|S_{I'}\setminus S_J|}}{\lcm(C_{I'})} \sum_{\emptyset\neq I\subseteq \N\setminus I_k}(-1)^{|I|+1}\frac{2}{\lcm(S_I)}
\\
&=
\sum_{\emptyset\neq I\subseteq I_k}(-1)^{|I|+1}\frac{2^{|I\setminus J|}}{\lcm(S_I)}\\
&\hspace*{9mm}
+\;\sum_{I'\subseteq J}(-1)^{|I'|}\frac{1}{\lcm(C_{I'})}
\sum_{I''\subseteq I_k\setminus J}(-1)^{|I''|}\frac{2^{|I''|}}{\lcm(C_{I''})}
\sum_{\emptyset\neq I\subseteq \N\setminus I_k}(-1)^{|I|+1}\frac{2}{\lcm(S_I)}\\
&=
\sum_{\emptyset\neq I\subseteq I_k}(-1)^{|I|+1}\frac{2^{|I\setminus J|}}{\lcm(S_I)}
\;+\;
d(\cF_{C_J})\cdot
\prod_{i\in I_k\setminus J}\left(1-\frac{2}{c_i}\right)
\cdot 2\,d(\cM_{\cB\setminus S_k})
\end{split}
\end{equation*}
Similarly, with the same splitting of sums,
one obtains
\begin{equation*}
\begin{split}
d(\cM_{\cB})
&=
\sum_{\emptyset\neq I\subseteq\N}(-1)^{|I|+1}\frac{1}{\lcm(S_I)}\\
&=
\sum_{\emptyset\neq I\subseteq I_k}(-1)^{|I|+1}\frac{1}{\lcm(S_I)}
\;+\;d(\cF_{C_J})\cdot\prod_{i\in I_k\setminus J}\left(1-\frac{1}{c_i}\right)\cdot d(\cM_{\cB\setminus S_k})
\end{split}
\end{equation*}
Hence, in view of \eqref{eq:distance-density},
\begin{equation}\label{eq:eps-as-difference-example}
\begin{split}
\frac{1}{2}\epsilon_\ell
&=
\frac{1}{2}d_1(\eta,\sigma^\ell\eta)
=
d(\cM_\cB\cup(\ell+d(\cM_\cB))-d(\cM_\cB)\\
&=
\sum_{\emptyset\neq I\subseteq I_k}(-1)^{|I|+1}\frac{2^{|I\setminus J|}-1}{\lcm(S_I)}\\
&\qquad+
d(\cF_{C_J})\cdot\left(\prod_{i\in I_k\setminus J}\left(1-\frac{2}{c_i}\right)-\prod_{i\in I_k\setminus J}\left(1-\frac{1}{c_i}\right)\right)\cdot d(\cM_{\cB\setminus S_k})
\\
&\qquad+
d(\cF_{C_J})\cdot
\prod_{i\in I_k\setminus J}\left(1-\frac{2}{c_i}\right)
\cdot d(\cM_{\cB\setminus S_k})
\end{split}
\end{equation}

For $\ell=\ell_n$ we have $r=r_n$, $k=n$ and $J=I_n$, so that
\begin{equation}\label{eq:eps_ell_n}
\frac{1}{2}\epsilon_{\ell_n}=d(\cF_{C_n})\,d(\cM_{\cB\setminus S_n})
{\;=d(\cM_{\cB\setminus S_n}\cap\cF_{C_n})=d(\cM_\cB\setminus\cM_{S_n}),}
\end{equation}
{where the second equality holds because $\cB\setminus S_n$ and $C_n$ are coprime.}
This is the first assertion of \eqref{eq:underline-eps-bound}.

We turn to $\ell\in[1,\ell_n)$:
To that end let $\jm=1+\max\{j\in\N: I_{j}\subseteq J\}$ and observe that
\begin{compactitem}
\item $m\le n$,
\item $j< \jm$ and $I\subseteq I_{j}$\quad$\Rightarrow$\quad$|I\setminus J|=0$,
\item $\jm\in I\subseteq I_{\jm}$\quad$\Rightarrow$\quad$|I\setminus J|=1$.
\end{compactitem}
Now the second line in \eqref{eq:eps-as-difference-example} evaluates as
\begin{equation*}
\begin{split}
&\sum_{\emptyset\neq I\subseteq I_k}(-1)^{|I|+1}\frac{2^{|I\setminus J|}-1}{\lcm(S_I)}
\;+\;
\sum_{I\subseteq I_k}(-1)^{|I|}\frac{2^{|I\setminus J|}-1}{\lcm(C_I)}\cdot d(\cM_{\cB\setminus S_k})
\\
&\quad=
\sum_{j\in I_k}\;\sum_{\emptyset\neq I\subseteq I_k,\ \max I=j}(-1)^{|I|+1}\frac{2^{|I\setminus J|}-1}{\lcm(C_I)}\left(\frac{1}{2^{r_j}}-d(\cM_{\cB\setminus S_k})\right)\\
&\quad=
\left(\frac{1}{2^{r_{\jm}}}-d(\cM_{\cB\setminus S_k})\right)\;
\sum_{I'\subseteq I_{\jm-1}}(-1)^{|I'|+2}\frac{1}{c_{\jm}\lcm(C_{I'})}\\
&\qquad\qquad +
\sum_{j=\jm+1}^k\left(\frac{1}{2^{r_j}}-d(\cM_{\cB\setminus S_k})\right)\;
\sum_{I'\subseteq I_{j-1}}(-1)^{|I'|+2}\frac{2^{(|I'\cup\{j\})\setminus J|}-1}{c_j\lcm(C_{I'})}\\
&\quad=: E_1+E_2.
\end{split}
\end{equation*}
As $d(\cM_{\cB\setminus S_k})\le\sum_{i=k+1}^\infty\frac{1}{2^{r_i}c_i}\le \frac{1}{2^{r_k}c_{k+1}}\le\frac{1}{2^{r_j}c_{\jm}}$ for each $j=\jm,\ldots,k$,
\begin{equation*}
E_1
\ge
\frac{1}{2^{r_{\jm}}}\cdot\left(1-\frac{1}{c_{\jm}}\right)\cdot\frac{1}{c_{\jm}}\cdot\prod_{i=1}^{\jm-1}\left(1-\frac{1}{c_i}\right)
=
\frac{1}{2^{r_{\jm}}c_{\jm}}\cdot\prod_{i\in I_\jm}\left(1-\frac{1}{c_i}\right)
\end{equation*}
and
\begin{equation*}
\begin{split}
E_2
&=
\sum_{j=\jm+1}^k\frac{1}{c_j}\left(\frac{1}{2^{r_j}}-d(\cM_{\cB\setminus S_k})\right)\sum_{I'\subseteq I_{j-1}\cap J}(-1)^{|I'|}\frac{1}{\lcm(C_{I'})}\\
&\hspace*{60mm}\sum_{I''\subseteq I_{j-1}\setminus J}(-1)^{|I''|}\frac{2^{|I''|+1_{\N\setminus J}(j)}-1}{\lcm(C_{I''})}
\end{split}
\end{equation*}
where the double sum $\sum_{I'\subseteq I_{j-1}\cap J}\cdots\sum_{I''\subseteq I_{j-1}\setminus J}\cdots$ equals
\begin{equation*}
\prod_{i\in I_{j-1}\cap J}\left(1-\frac{1}{c_i}\right)
\prod_{i\in I_{j-1}\setminus J}\left(1-\frac{1}{c_i}\right)\left(2^{1_{\N\setminus J}(j)}\prod_{i\in I_{j-1}\setminus J}\left(1-\frac{1}{c_i-1}\right)-1\right).
\end{equation*}
Hence
\begin{equation*}
\begin{split}
E_2
&\ge
-\sum_{j=\jm+1}^k\frac{1}{c_j}\left(\frac{1}{2^{r_j}}-d(\cM_{\cB\setminus S_k})\right)
\cdot \prod_{i\in I_{j-1}}\left(1-\frac{1}{c_i}\right)\\
&\ge
-\frac{2}{c_{\jm+1}2^{r_{\jm+1}}}\prod_{i\in I_{\jm}}\left(1-\frac{1}{c_i}\right)
\end{split}
\end{equation*}
Therefore, in view of \eqref{eq:eps-as-difference-example} and \eqref{eq:eps_ell_n} and observing that $I_m\subseteq I_n$ and
$d(\cM_{\cB\setminus S_n})\le 1/(2^{r_{n}}c_{n+1})$,
\begin{equation*}
\begin{split}
\frac{1}{2}\epsilon_\ell
&\ge
E_1+E_2
\ge
\frac{1}{2^{r_{\jm}}}\prod_{i=1}^{\jm}\left(1-\frac{1}{c_i}\right)\cdot\left(\frac{1}{c_{\jm}}-\frac{1}{c_{\jm+1}}\right)\\
&\ge
\frac{1}{2}\epsilon_{\ell_n}\cdot c_{n+1}\cdot\left(\frac{1}{c_{\jm}}-\frac{1}{c_{\jm+1}}\right)
\ge
\frac{1}{2}\epsilon_{\ell_n}\cdot \left(\frac{c_{\jm+1}}{c_\jm}-1\right).
\end{split}
\end{equation*}
This implies the second assertion of \eqref{eq:underline-eps-bound}.
\end{example}

\begin{example}[Non block code equivalent Toeplitz examples]\label{ex:Toeplitz-invariant}
For certain choices of numbers $r_i$ and $c_i$ from the previous example
we discuss how the \emph{dimensional scale} describes the dependence of $\cN_{\epsilon_{\ell_n}}(\cO(\eta))$ on $\epsilon_{\ell_n}$, where
$\ell_n=2^{r_n}c_1\cdots c_n$ and
$\epsilon_{\ell_n}=2 d(\cF_{C_n})\, d(\cM_{\cB\setminus S_n})$.
\\[2mm]
a)\; Consider the case $r_n=n$.
If $c_n^{-1}\epsilon_{\ell_n}\le \epsilon\le c_{n-1}^{-1}\epsilon_{\ell_{n-1}}$, then $\cN_\epsilon(\cO(\eta))\ge \cN_{c_{n-1}^{-1}\epsilon_{\ell_{n-1}}}(\cO(\eta))\ge\ell_{n-1}$, see~\eqref{eq:Neps-bounds-Toeplitz}.
Observing also that $d(\cF_{C_n})=\prod_{i=1}^n\left(1-\frac{1}{c_i}\right)\ge \frac{1}{n}$ for all $n\ge 2$ (recall that $c_i\ge 2i+1$) and $d(\cM_{\cB\setminus S_n})\ge 1/(2^{n+1}c_{n+1})$, this yields for all $\alpha>0$
\begin{equation*}
\cN_\epsilon(\cO(\eta))\cdot\epsilon^\alpha
\ge
\ell_{n-1}c_{n}^{-\alpha}\epsilon_{\ell_n}^\alpha
\ge
\const_\alpha \cdot n^{-\alpha}2^{n(1-\alpha)} c_1\cdots c_{n-1}(c_nc_{n+1})^{-\alpha}\to\infty
\end{equation*}
as $n\to\infty$, provided $c_{n+1}=O((c_1\cdots c_{n})^\gamma)$ for all $\gamma>0$, e.g. if $c_1<c_2<\dots$ are the odd prime numbers. In that case,
$\alpha=+\infty$ is not only the critical parameter for the growth of $\ell_n$ but also for the growth of $[\cN_\epsilon(\cO(\eta))]_\approx$ in the \emph{dimensional scale} $\epsilon\mapsto\epsilon^\alpha$, see again  \cite{Kloeckner2012} for details.
\\[2mm]
b)\; Suppose now that
$r_i=[\kappa\log_2(c_1\cdots c_i)]$ for some parameter $\kappa>0$
and, as before, $c_{n+1}=O((c_1\cdots c_{n})^\gamma)$ for each $\gamma>0$. Then $\ell_n\asymp (c_1\cdots c_n)^{1+\kappa}$ and $\epsilon_{\ell_n}\asymp\prod_{i=1}^n\left(1-\frac{1}{c_i}\right)\cdot(c_1\cdots c_{n+1})^{-\kappa}\cdot c_{n+1}^{-1}$.
Fix $q\in\N$ such that $q\kappa\ge 2$. There is some constant $K>0$ such that
\begin{equation*}
2\epsilon_{\ell_{n+q}}
<
K\cdot\epsilon_{\ell_n}\cdot c_n^{-q\kappa}(c_n^{-q}\cdot c_{n+2}\cdots c_{n+q+1})^{-\kappa}\cdot\frac{c_{n+1}}{c_{n+q}}
\le
Kc_n^{-1}\cdot c_n^{-1}\epsilon_{\ell_n}
\le
c_n^{-1}\epsilon_{\ell_n}
\end{equation*}
for all large $n$.
Therefore,
if $c_n^{-1}\epsilon_{\ell_n}\le \epsilon\le c_{n-1}^{-1}\epsilon_{\ell_{n-1}}$, then \eqref{eq:Neps-bounds-Toeplitz} implies
\begin{equation*}
\ell_{n-1}
\le
\cN_{c_{n-1}^{-1}\epsilon_{\ell_{n-1}}}(\cO(\eta))
\le
\cN_\epsilon(\cO(\eta))
\le
\cN_{c_n^{-1}\epsilon_{\ell_{n}}}(\cO(\eta))
\le
\cN_{2\epsilon_{\ell_{n+q}}}(\cO(\eta))
\le
\ell_{n+q},
\end{equation*}
and elementary computations show that
\begin{equation*}
\cN_\epsilon(\cO(\eta))\cdot\epsilon^\alpha
\to
\begin{cases}
0&\text{ if }\alpha\kappa>1+\kappa\\
+\infty&\text{ if }\alpha\kappa<1+\kappa
\end{cases}
\quad\text{ as }\epsilon\to0,
\end{equation*}
so that $\alpha=\frac{1+\kappa}{\kappa}$ is the critical parameter of $[\cN_\epsilon(\cO(\eta))]_\approx$ for the dimensional scale. With the terminology from \cite{FGJ-2016, FGJK-2021} this means that the $\cB$-free Toeplitz shift has amorphic complexity $\frac{1+\kappa}{\kappa}$.

We recall that all these examples give rise to a minimal entropy zero system, which is an isomorphic extension of its maximal equicontinuous factor (MEF), which in turn is isomorphic to its associated odometer group $G$. If we fix $c_1,c_2,\dots$, then all these odometer groups are isomorphic to $\Z_{2^\infty}\times \prod_{i\ge 1}\Z/c_i\Z$, see e.g.~\cite[Thm.~1.2]{Downarowicz2005}. Moreover, all these systems have a trivial automorphism group (i.e.~isomorphic to $\Z$), see~\cite{Dymek2017, Dymek-Kasjan-Keller-2021}. Nevertheless, as the above examples all have different critical dimension parameters $\alpha=\frac{1+\kappa}{\kappa}$ or $\alpha=\infty$, not any two of them can have $\approx$-equivalent Besicovitch covering numbers and hence not any two of them can be block code equivalent.
\end{example}

\subsection{The Erd\H{o}s case}\label{subsec:Erdos-examples}

Suppose now that all numbers in $\cB$ are pairwise coprime and that $\sum_{b\in\cB}\frac{1}{b}<\infty$. This is called the \emph{Erd\H{o}s case}.
Subsections~\ref{subsec:arithmetic} and~\ref{subsec:covering} contain a tool-box to estimate Besicovitch covering numbers of Erd\H{o}s examples. Here we just study the most classical one - the square-free numbers.

\begin{example}[Square-free numbers]\label{ex:square-free}
Let $\cB=\{p_i^2:i>0\}$ where $p_1<p_2<\dots$ is the sequence of all prime numbers, $S_n=\{p_1^2,\dots,p_n^2\}$, $\ell_n=\lcm(S_n)=p_1^2\cdots p_n^2=e^{2p_n\cdot(1+o(1))}$,
and let $\eta=\eta_\cB$. Then
\begin{equation*}
\epsilon_{\ell_n}=\frac{12}{\pi^2}\cdot\left(1-\prod_{i>n}\left(1-\frac{1}{p_i^2-1}\right)\right)
=
\frac{12}{\pi^2}\cdot \sum_{i>n}\frac{1}{p_i^2}\cdot(1+o(1))
=
\frac{12}{\pi^2}\cdot \frac{1+o(1)}{p_n\log p_n},
\end{equation*}
by Corollary~\ref{cor:formula-G}, and it follows from Lemmas~\ref{lemma:upper-estimate} and~\ref{lemma:lower-estimate} that
\begin{equation*}
\cN_{M\epsilon_{\ell_n}}(\cO(\eta))\le\ell_n\le \cN_{\epsilon_{\ell_n}}(\cO(\eta))\;
\text{ for each $M>1$.}
\end{equation*}
As, for each $\xi>0$,
\begin{equation*}
\begin{split}
\ell_n\cdot \exp\left(-(\xi\epsilon_{\ell_n})^{-\alpha}\right)
&=
\exp\left(2p_n\cdot(1+o(1))-(12\xi/\pi^2)^{-\alpha}\, (p_n\log p_n)^{\alpha}\cdot(1+o(1))\right)\\
&\to
\begin{cases}
0&\text{ if }\alpha>1\\
+\infty&\text{ if }\alpha<1,
\end{cases}
\end{split}
\end{equation*}
$\alpha=1$ is not only the critical parameter for the growth of $\ell_n$ but also for the growth of $\cN_\epsilon(\cO(\eta))$ on the \emph{power exponential scale} $\epsilon\mapsto\exp(-\epsilon^{-\alpha})$, see \cite{Kloeckner2012} for details. (Observe that  $\epsilon_{\ell_n}/\epsilon_{\ell_{n+1}}\to 1$.) Note also that in the \emph{polynomial (or dimensional) scale} the critical parameter is $+\infty$,
i.e.~$\eta$ has infinite amorphic complexity.
\end{example}

\begin{remark}
In the Erd\H{o}s case, Besicovitch covering numbers are not useful as invariants for block code equivalence, because two such systems are block code equivalent if and only
their underlying sets $\cB$ are identical.\footnote{Private communication by Mariusz Lema\'n{}czyk. The argument is inspired by \cite{Mentzen2017} and extends to the broader class of hereditary $\cB$-free shifts as in \cite{KLRS2022}.}
\end{remark}

\section{$0$-$1$-sequences generated by the golden rotation}\label{sec:rotation}

\subsection{Coding with irrational rotations}

Let $\alpha\in\R\setminus\Q$, $\T=\R/\Z$ and denote $R_\alpha:\T\to\T, h\mapsto h+\alpha \mod 1$. Consider a Borel set $W\subset\T$ with $0<\lambda(W)<1$, where $\lambda$ denotes normalized Haar measure on $\T$, i.e.~Lebesgue measure in this case.
The window $W$ determines a coding map
$\varphi:\T\to\{0,1\}^\Z$, $(\varphi(h))_k=1_W(R_\alpha^kh)$.
In Subsection~\ref{subsec:construction-W} we construct, for $\alpha$ being the inverse of the golden number,
a family of Borel measurable windows $W_{s}\subset\T$ $(s>1)$ with the following properties:
\begin{compactenum}[(P1)]
\item \label{item:P1}
For each $s>1$, the orbit of $\varphi_s(h)$ is dense in $\{0,1\}^\Z$ for $\lambda$-a.e.~$h$, where $\varphi_s$ is the coding map defined by the window $W^s$.
\item \label{item:P2}
All $W^s$ are Haar aperiodic.
\item \label{item:P3}
$\lambda\{h\in\T:\lambda(W^s\triangle(W^s+h))\le \epsilon\}{\ \approx\ } \epsilon^{s/(s-1)}$ as $\epsilon\to0$, i.e. there is $C>0$ such that $C^{-1}\epsilon^{s/(s-1)}\le \lambda\{h\in\T:\lambda(W^s\triangle(W^s+h))\le \epsilon\}\le C\epsilon^{s/(s-1)}$ for sufficiently small $\epsilon>0$.
\end{compactenum}

These properties allow to prove
\begin{proposition}\label{prop:rotation}
Let $s>1$ and denote $\mu_s:=\lambda\circ\varphi_s^{-1}$. Then
\begin{compactenum}[a)]
\item $(\{0,1\}^\Z,\sigma,\mu_s)$ has spectrum $\{e^{2\pi i \ell\alpha}:\ell\in \Z\}$,
\item $\overline{\cO(x)}=\{0,1\}^\Z$ for $\mu_s$-a.e.~$x$, and
\item the amorphic complexity of $\cO(x)$ equals $\frac{s}{s-1}$ for all $x\in X_\mu$.
\end{compactenum}
Hence, if $1<s<s'$, then $x$ and $x'$ are not block code equivalent for $\mu_s$-a.e.~$x$ and $\mu_{s'}$-a.e.~$x'$, although
they are generic for measures with identical spectra and
their common orbit closure $\{0,1\}^\Z$ has a vast (though countable) automorphism group (see e.g.\ \cite{Salo2018}).
\end{proposition}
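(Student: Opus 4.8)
The plan is to derive the three items by feeding the properties (P1)--(P3) of the windows $W^s$ into the general theory of Section~\ref{sec:Besicovitch space}, applied to the Mirsky measure $\mu_s=\lambda\circ\varphi_s^{-1}$ (so that $G=\T$, $R=R_\alpha$, $W=W^s$), and then to read off the final sentence from the block-code invariance of the amorphic complexity recorded in Subsection~\ref{subsec:sliding-block-code}. Since $\mu_s$ is a Mirsky measure, Corollary~\ref{coro:basic-model-set} yields $\DD(\mu_s)=0$, so Theorem~\ref{theo:precisions}, Proposition~\ref{proposition:R-D1} and Remark~\ref{remark:G_0=G?} are all available: $\tX_{\mu_s}$ is a compact abelian group, $\mu_s(X_{\mu_s})=1$, and $\tpi\circ\varphi_s$ extends on a set of full $\lambda$-measure to an isometric group isomorphism $\Phi\colon\T\to\tX_{\mu_s}$.

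For a) I would use the Haar aperiodicity of $W^s$ (property (P2)): by Remark~\ref{remark:G_0=G?} this makes $\Phi$ a topological group isomorphism, and pushing Haar measure on $\T$ through $\Phi$ produces a translation-invariant probability on the compact group $\tX_{\mu_s}$, hence $\Phi_*\lambda=\tilde\mu_s=\mu_s\circ\tpi^{-1}$. Thus $\Phi$ is a measure-theoretic isomorphism $(\T,R_\alpha,\lambda)\to(\tX_{\mu_s},\tsigma,\tilde\mu_s)$, and composing with the isomorphism $\tpi$ of Theorem~\ref{theo:precisions}a identifies $(\{0,1\}^\Z,\sigma,\mu_s)$ with the irrational rotation $(\T,R_\alpha,\lambda)$, whose eigenvalue group is exactly $\{e^{2\pi i\ell\alpha}:\ell\in\Z\}$. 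Part b) is immediate: by (P1) the set $\{h\in\T:\overline{\cO(\varphi_s(h))}=\{0,1\}^\Z\}$ has full $\lambda$-measure, so its image under $\varphi_s$ has full $\mu_s$-measure.

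For c) the plan is to combine Theorem~\ref{theo:precisions}c with the distance formula of Lemma~\ref{lemma:d1-formula}a. I would fix one $h_0$ in the full-measure set of Lemma~\ref{lemma:d1-formula}a, chosen with $\tpi\varphi_s(h_0)=\Phi(h_0)$ by the maximal choice of $G_0$ in Remark~\ref{remark:G_0=G?}, so that $x_0:=\varphi_s(h_0)\in X_{\mu_s}$. Then Lemma~\ref{lemma:d1-formula}a and translation invariance of $\lambda$ give
\[
\mu_s\bigl(B_{d_1}(x_0,\epsilon)\bigr)=\lambda\{h\in\T:\ \lambda(W^s\triangle(W^s+h))<\epsilon\},
\]
which by (P3) is comparable, up to multiplicative constants, to $\epsilon^{s/(s-1)}$ (the open ball here versus the closed condition in (P3) only changes the constant). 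Theorem~\ref{theo:precisions}c then sandwiches $\cN_\epsilon(\cO(x_0))$ between $\mu_s(B_{d_1}(x_0,\epsilon))^{-1}$ and $\mu_s(B_{d_1}(x_0,\epsilon/2))^{-1}$, so $\cN_\epsilon(\cO(x_0))\asymp\epsilon^{-s/(s-1)}$ and $\ac(\cO(x_0))=\tfrac{s}{s-1}$ by \eqref{eq:ac}; the identity $\cN_\epsilon(\cO(x))=\cN_\epsilon(\cO(x_0))$ for all $x\in X_{\mu_s}$ (again Theorem~\ref{theo:precisions}c) upgrades this to every $x\in X_{\mu_s}$. Finally, since $s\mapsto\tfrac{s}{s-1}=1+\tfrac1{s-1}$ is strictly decreasing on $(1,\infty)$, for $1<s<s'$ a $\mu_s$-typical point $x$ and a $\mu_{s'}$-typical point $x'$ have distinct amorphic complexities, hence are not block code equivalent; the remaining assertions of the final sentence are a), b), and the cited largeness \cite{Salo2018} of the automorphism group of $\{0,1\}^\Z$.

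The only delicate point inside this deduction should be the bookkeeping of full-measure sets — matching the $\lambda$-full set on which $d_1(\varphi_s(h),\varphi_s(h'))=\lambda(W^s\triangle(W^s+(h'-h)))$ holds with the $D_1$-class $X_{\mu_s}$ — which is exactly what the ``maximal choice of $G_0$'' in Remark~\ref{remark:G_0=G?} handles. The genuinely hard work lies upstream, in Subsection~\ref{subsec:construction-W}: building a single family of golden-rotation windows satisfying (P1)--(P3) simultaneously, and in particular producing the sharp two-sided estimate (P3) with exponent $s/(s-1)$ out of the self-similar return structure of $R_\alpha$.
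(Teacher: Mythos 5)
Your proposal is correct, and for parts b), c) and the final block-code conclusion it follows essentially the same path as the paper: b) is read off from (P\ref{item:P1}), and c) is obtained by fixing one $g\in G_0\cap\varphi_s^{-1}(X_{\mu_s})$, computing $\mu_s(B_{d_1}(\varphi_s(g),\epsilon))=\lambda\{h\in\T:\lambda(W^s\triangle(W^s+h))<\epsilon\}$ via Lemma~\ref{lemma:d1-formula}a, and sandwiching $\cN_\epsilon(\cO(x))$ with Theorem~\ref{theo:precisions}c together with (P\ref{item:P3}); your remark about the strict versus non-strict inequality only affecting constants is the same (tacit) step the paper takes.

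The one place where you genuinely diverge is part a). The paper simply quotes \cite[Thm.~B1']{KRS2023}: Haar aperiodicity of the window forces the spectrum of $(\{0,1\}^\Z,\sigma,\lambda\circ\varphi^{-1})$ to be $\{e^{2\pi i\ell\alpha}:\ell\in\Z\}$. You instead derive this internally: (P\ref{item:P2}) plus Remark~\ref{remark:G_0=G?} give the isometric group isomorphism $\Phi:\T\to\tX_{\mu_s}$, and combining it with Theorem~\ref{theo:precisions}a identifies $(\{0,1\}^\Z,\sigma,\mu_s)$ with the golden rotation. This is a legitimate and arguably more self-contained route (it relies only on the ``easily seen'' assertions of Remark~\ref{remark:G_0=G?}), but two small steps are used implicitly and should be spelled out: the intertwining $\Phi\circ R_\alpha=\tsigma\circ\Phi$, which holds on the full-measure (hence dense) set $G_1\cap R_\alpha^{-1}(G_1)$ because $\varphi_s\circ R_\alpha=\sigma\circ\varphi_s$ everywhere, and then extends to all of $\T$ by continuity; and the identification $\Phi_*\lambda=\tilde\mu_s$, which is cleanest via the unique ergodicity of $(\tX_{\mu_s},\tsigma)$ from Proposition~\ref{proposition:R-D1}e) (rather than via an appeal to Haar measure on $\tX_{\mu_s}$). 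With these two lines added, your argument for a) is complete; everything else matches the paper, and, as you say, the real work of the paper lies in Subsection~\ref{subsec:construction-W}, not in this deduction.
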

\begin{proof}
a)\;The following fact is a special case of \cite[Th. B1']{KRS2023}:
If $W$ is Haar aperiodic, then the dynamical system $(\{0,1\}^\Z,\sigma,\lambda\circ\varphi^{-1})$ has spectrum
$\{e^{2\pi i \ell\alpha}:\ell\in \Z\}$.
So the assertion follows from~(P\ref{item:P2}).\\
b)\;follows from~(P\ref{item:P1}).\\
c)\;
For all $\epsilon>0$ and $x,y\in X_\mu$,
\begin{equation*}
\mu(B_{d_1}(x,\epsilon))^{-1}\le\cN_\epsilon(\cO(x))=\cN_\epsilon(\cO(y))\le \mu(B_{d_1}(x,\epsilon/2))^{-1}.
\end{equation*}
by Theorem~\ref{theo:precisions}c), so it
suffices to prove the claim for just one $x\in X_\mu$.
Recall that $G=\T$ and let $g\in G_0\cap\varphi^{-1}(X_\mu)\subseteq G$, see Lemma~\ref{lemma:d1-formula}.  Denote $x=\varphi(g)$. Then
\begin{equation*}
\begin{split}
\mu(B_{d_1}(x,\epsilon))&=\lambda\{g'\in\T: d_1(\varphi(g'),\varphi(g))<\epsilon\}\\
&=
\lambda\{g'\in\T:\lambda((W-g')\triangle(W-g))<\epsilon\}\\
&=
\lambda\{h\in\T:\lambda(W\triangle(W+h))<\epsilon\}.
\end{split}
\end{equation*}
Now assertion c) follows from property (P\ref{item:P3}).
\end{proof}

\subsection{The golden rotation}

Denote by $\phi=\frac{\sqrt{5}+1}{2}$ the golden number and observe the following facts:
\begin{compactenum}[(GR1)]
\item $\phi$ and $-\phi^{-1}=\frac{-\sqrt{5}+1}{2}$ are the roots of $z^2-z-1$, so $\phi^2=\phi+1$ and $\phi-1=\phi^{-1}$.
\item Let $\alpha:=\phi-1=\phi^{-1}$. Then $\alpha$ and $-\alpha^{-1}=-\phi$ are the roots of $z^2+z-1$, so $\alpha^2=1-\alpha$ and $\alpha+1=\alpha^{-1}$.
\item Denote by $q_0=1$, $q_1=1$ and $q_{n+1}=q_n+q_{n-1}$ $(n>0)$ the Fibonacci numbers. Then
\begin{equation*}
q_n=\frac{\phi^2}{\phi^2+1}\phi^{n}+\frac{1}{\phi^2+1}(-\phi)^{-n}.
\end{equation*}

\item Let $p_0=0$ and $p_n=q_{n-1}$ $(n>0)$, and denote $\theta_n:=(-1)^n(q_n\alpha-p_n)$. Then $\theta_0=\alpha$, $\theta_1=\alpha^2$, and $\theta_{n+1}=-\theta_n+\theta_{n-1}$, so that $\theta_n=\alpha^{n+1}$.
\item
Let $\T=\R/\Z$ and denote $R:\T\to\T,x\mapsto x+\alpha \mod 1$. For $k\in\Z$ let $c_k:=R^k(0)$ and observe that $c_{q_n}=q_n\alpha-p_n=(-1)^{n}\theta_n \mod 1$. For even $n$, $c_{q_{n-1}},c_{q_n}$ and $c_{q_{n+1}}$ are arranged as follows:\\
\includegraphics[scale=1,clip,trim=0 30 0 30]{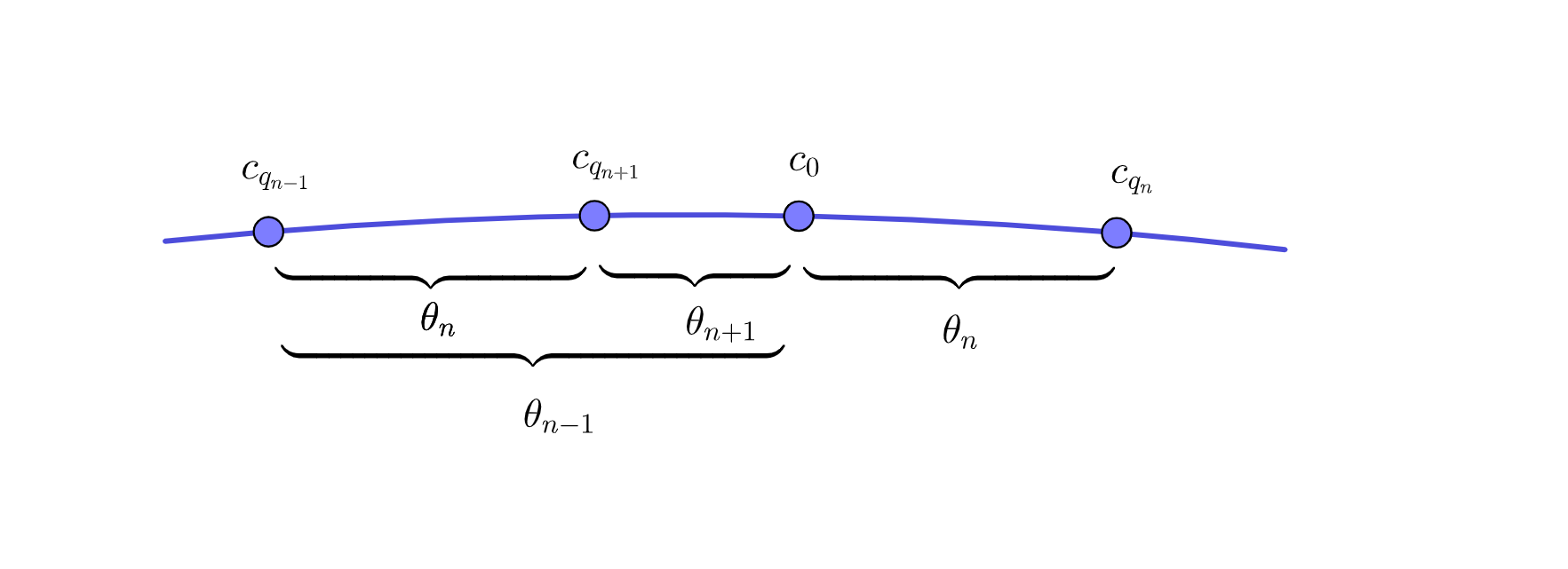}\\
For odd $n$, the orientation of points in this sketch is reversed.
Denote by $J_n$ the interval $(c_{q_{n-1}},c_{q_n}]\subseteq \T$ if $n$ is even and $[c_{q_{n}},c_{q_{n-1}})\subseteq \T$ if $n$ is odd.
\item\label{item:GR6}
If $c_k\in J_n$ for some $k>0$, then $k\ge q_n$, see \cite[line 5 from below on p.28]{dMvS} (and the discussion on the preceding pages).
\end{compactenum}

\subsection{Construction of the windows $W^s$}\label{subsec:construction-W}
In this section we fix a parameter $s>1$ and construct a Borel set $W^s\subset\T$ satisfying (P\ref{item:P1}) -- (P\ref{item:P3}). Observe that the validity of (P\ref{item:P1}) -- (P\ref{item:P3}) remains unchanged when $W^s$ is replaced by some ${W'}^s$ such that $\lambda(W^s\triangle {W'}^s)=0$.
In particular we can construct $W^s$ as a limit of sets $W^s_n$ in the sense that $\lambda(W^s\triangle W^s_n)\to0$ as $n\to\infty$.

Since $s$ is fixed, we skip it as an index wherever possible. So we write $W$ instead of $W^s$ and $W_n$ instead of $W_n^s$.
Moreover we write $\delta_n$ for $\nicefrac12\,\alpha^s\theta_{[sn]}$.
It may help to keep in mind that
\begin{equation*}\label{eq:delta-theta}
\nicefrac\alpha2\,\theta_n^s\le \nicefrac12\,\theta_n^s\alpha^{[sn]-sn+1}=\nicefrac12\,\alpha^{s+[sn]+1}=\nicefrac12\,\alpha^s\theta_{[sn]}=\delta_n\le\nicefrac12\,\theta_{n}^s.
\end{equation*}
Denote by $I_n^1$ the interval $(-\delta_n,0]\subset\T$, let $I_n^0=(0,\delta_n]$ and $I_n=(-\delta_n,\delta_n]$, so that $I_n^0\cup I_n^1= I_n$. Observe the following:
\begin{compactenum}[(O1)]
\item \label{item:returns}
Suppose that $k,k'\in\Z$, $0\le n\le n'$ and $(c_{k'}+I_{n'})\cap (c_k+I_n)=R^{k'}(I_{n'})\cap R^k(I_n)\neq\emptyset$. Then $(c_{k'-k}+I_{n'})\cap I_n\neq\emptyset$, so that $c_{k'-k}=-c_{k-k'}\in[-2\delta_n,2\delta_n]$.
Since $[-2\delta_n,2\delta_n]\subset J_{[sn]}$ by definition of $\delta_n$, we conclude that $|k'-k|\ge q_{[sn]}$.
\item \label{item:number_of_returns}
Denote by $T_{n,n'}$ the set of all
$k'\in[0,q_{n'+1})$ such that
$(c_{k'}+I_{n'})\cap I_n\neq\emptyset$. Then, in view of the preceding discussion, $c_{k'}\in [-2\delta_n,2\delta_n]$ for $k'\in T_{n,n'}$. Since the distance of $c_{k'}$ and $c_{k''}$ for two different $k',k''\in T_{n,n'}$ is the same as the distance of $c_{|k''-k'|}$ to zero, and as $|k''-k'|<q_{n'+1}$, it follows that $c_{|k''-k'|}\not\in J_{n'+1}$ by (GR\ref{item:GR6}), so the distance of $c_{|k''-k'|}$ to zero is at least $\theta_{n'+1}$. Therefore, $\card T_{n,n'}\le 4\delta_n/\theta_{n'+1}+1$.
\end{compactenum}

Since $[n/4]\cdot2^{[n/4]}\le q_{n-1}=q_{n+1}-q_n$ for all $n\ge 1$ (check the first few $n$ and observe that $2^{1/4}<\phi$),
there exists an $\omega\in\{0,1\}^\Z$ such that each segment $\omega_{[q_n,q_{n+1})}$ contains all $0$-$1$-blocks of length $[n/4]$ for all $n\ge1$.
\begin{equation*}
V_n^1=\bigcup_{q_{n}\le k<q_{n+1}}c_k+I_n^{\,\omega_{k-q_n}}\;\text{ and }\;
V_n^0=\bigcup_{q_{n}\le k<q_{n+1}}c_k+I_n^{\,1-\omega_{k-q_n}},
\end{equation*}
so that
\begin{equation*}
V_n^1\cup V_n^0=\bigcup_{q_{n}\le k<q_{n+1}}c_k+I_n.
\end{equation*}
Then $V_n^1\cap V_n^0=\emptyset$ and, because of~(O\ref{item:returns}), all three sets are disjoint unions of $q_{n-1}$ intervals of length $\delta_n$, respectively $2\delta_n$.

So fix some $\omega\in\{0,1\}^\Z$ as above. In particular, $\omega$ has a  dense shift orbit. The construction of the $W_n$ proceeds as follows:
\begin{equation*}
W_1=V_1^1=V_1^1\setminus V_1^0,
\end{equation*}
and for $n>1$
\begin{equation}\label{eq:W_n-def}
W_{n}=(W_{n-1}\setminus V_n^0)\cup V_n^1
=(W_{n-1}\cup V_n^1)\setminus V_n^0.
\end{equation}
We note some simple consequences:
\begin{compactitem}
\item $W_n\cap V_n^1=V_n^1$ and $W_n\cap V_n^0=\emptyset$.
\item Let $x\in I_n$. Then $R^kx\in c_k+I_n$ for all $k$.  If $q_n\le k<q_{n+1}$, then $R^kx\in V_n^1\cup V_n^0$, so that $R^kx\in W_n$ if and only if $R^kx\in V_n^1\cap(c_k+I_n)= c_k+I_n^{\,\omega_{k-q_n}}$, equivalently $x\in I_n^{\,\omega_{k-q_n}}$. Hence,
for all $n\ge 1$,
\begin{equation}\label{eq:W_n-equivalence}
\forall x\in I_n\; \forall k\in[q_n,q_{n+1}):\, R^kx\in W_n\,\Leftrightarrow\,\omega_{k-q_n}=1.
\end{equation}
\item $\lambda(W_{n-1}\triangle W_n)\le \lambda(V_n^0\cup V_n^1)=2q_{n-1}\delta_n\le \phi^{n-1}\theta_{[sn]}\le \phi^n\alpha^{sn}=\alpha^{(s-1)n}$.
\item It follows that there exists a Borel set $W$ such that
\begin{equation}\label{eq:W triangle Wn}
\lambda(W\triangle W_n)\le \epsilon_n:=\frac{\alpha^{(s-1)(n+1)}}{1-\alpha^{s-1}}\to0\;\text{ and }\;
W\triangle W_n\subseteq\bigcup_{n'>n}V_{n'}^1\cup V_{n'}^0.
\end{equation}
\end{compactitem}
So we need to show that \eqref{eq:W_n-equivalence} continues to hold for
sufficiently large $n$ and
``many'' $x\in I_n$, when $W_n$ is replaced by $W$.\footnote{\label{foot:arbitrary-choice}We defined $I_n^\pm$ such that $0\in I_n^+$ and $0\not\in I_n^-$. We could just as well have made an arbitrary choice of this kind for each pair of intervals $c_k+I_n^{\,\omega_{k-q_n}}$ and $c_k+I_n^{\,1-\omega_{k-q_n}}$ separately. Since it follows from (O\ref{item:returns}) that no $c_k$ is contained in any interval $c_{k'}+I_{n'}$ with $k'\in[q_{n'},q_{n'+1})$, such choices would allow to produce an arbitrary element from $\{0,1\}^\Z$ as $\varphi_s(0)$, so that  $\varphi_s(0)\not\in X_\mu$ in general. See also Remark~\ref{remark:G_0=G?}.}

To that end let $n\ge 1$, $x\in I_n$ and $k\in\Z$, and suppose that $R^kx\in W\triangle W_n$ with $k\in[q_n,q_{n+1})$. Then there is $n'>n$ such that $R^kx\in V_{n'}^1\cup V_{n'}^0=\bigcup_{q_{n'}\le k'<q_{n'+1}}c_{k'}+I_{n'}$.
Hence $R^kx\in (c_k+I_n)\cap(c_{k'}+I_{n'})$ for some $k'\in[q_{n'},q_{n'+1})$,
so that $|k'-k|\ge q_{[sn]}$ in view of~(O\ref{item:returns}). Hence $q_{n'+1}>k'\ge q_n+ q_{[sn]}> q_{[sn]}$, so that $n'\ge [sn]$.

If $(c_k+I_n)\cap(c_{k'}+I_{n'})\neq\emptyset$ for some
$k\in[0,q_{n'})$ and
$k'\in[q_{n'},q_{n'+1})$, then $k'-k\in T_{n,n'}$, the index set defined in~(O\ref{item:number_of_returns}).
Hence $R^kx\in \bigcup_{k'\in k+T_{n,n'}}(c_{k'}+I_{n'})$, i.e.
$x\in \bigcup_{k''\in T_{n,n'}}(c_{k''}+I_{n'})$. Therefore
\begin{equation}\label{eq:local-exponential estimate-1}
\begin{split}
&\lambda\left\{x\in I_n: \exists k\in[q_n,q_{n+1})\text{ s.t. }R^kx\in W\triangle W_n\right\}\\
&\le
\sum_{n'\ge [sn]}\lambda\left(\bigcup_{k''\in T_{n,n'}}(c_{k''}+I_{n'})\right)\\
&\le
\sum_{n'\ge [sn]}\card T_{n,n'}\cdot\lambda(I_{n'})
\le
\sum_{n'\ge [sn]}(4\delta_n/\theta_{n'+1}+1)\cdot 2\delta_{n'}\\
&\le 
\ \const\cdot\lambda(I_n)\cdot\sum_{n'\ge [sn]}\left(\alpha^{(s-1)n'}+\alpha^{s(n'-n)}\right)
\le \const\cdot\lambda(I_n)\cdot \alpha^{(s-1)sn}
\end{split}
\end{equation}
with constants that depend on $s$.\footnote{The $s$-dependence of the second constant is of the order $1/(1-\alpha^{s-1})$.}
So fix $n\ge 1$ large enough that $\const\cdot\lambda(I_n)\cdot \alpha^{(s-1)sn}\le\lambda(I_n)/2$. Then, by the ergodicity of the irrational rotation $R$,
for $\lambda$-a.e.~$x$ there is some $t>0$ such that $(\varphi(x))_{t+k}=\omega_{k}$ for all $k\in[q_n,q_{n+1})$. It follows that, except for $x$ in an exceptional set of $\lambda$-measure zero,
all $\varphi(x)$ have a dense shift orbit in $\{0,1\}^\Z$.
 This is (P\ref{item:P1}).

We turn to (P\ref{item:P3}). Let $H_{n}=\{h\in\T:\lambda(W_n\triangle(W_n+h))\le 3\epsilon_n\}$.
We will show that there is some integer $L>0$ (which depends on $s$) such that for all $n\ge L$
\begin{eqnarray}
\hspace*{7mm}\{h\in\T:\lambda(W\triangle(W+h))<\epsilon_{n+L}\}
\subseteq&\hspace*{-3mm} H_{n} \subseteq&\hspace*{-3mm}
\{h\in\T:\lambda(W\triangle(W+h))<\epsilon_{n-L}\}.\label{eq:two-inclusions-1}\\
I_{n+L}\subseteq&\hspace*{-3mm} H_{n}\subseteq&\hspace*{-3mm} I_{n-L},\label{eq:two-inclusions-2}
\end{eqnarray}
Then, for $n\ge 2L$ and $\epsilon\in(\epsilon_{n+1},\epsilon_{n}]$
\begin{equation}\label{eq:final-inclusions}
{I_{n+2L+1}}\;\subseteq\;\{h\in\T:\lambda(W\triangle(W+h))<\epsilon\}\;\subseteq\;I_{n-2L},
\end{equation}
so that there is a constant $C>0$ (which depends on $s$) such that
\begin{equation*}
\begin{split}
C^{-1}\epsilon^{s/(s-1)}
\le
\alpha^{{s+s(n+2L+1)}}
&\le \lambda\{h\in\T:\lambda(W\triangle(W+h))<\epsilon\}\\
&\le \alpha^{s(n-2L)}
\le C\epsilon^{s/(s-1)}.
\end{split}
\end{equation*}
This proves (P\ref{item:P3}), once the inclusions \eqref{eq:two-inclusions-1} and \eqref{eq:two-inclusions-2} are verified. For \eqref{eq:two-inclusions-1} it suffices observe that, in view of~\eqref{eq:W triangle Wn}, $|\lambda(W\triangle (W+h))-\lambda(W_n\triangle (W_n+h))|\le 2\lambda(W\triangle W_n)\le 2\epsilon_n$ and to choose $L$ so large that $\frac{\epsilon_{n-L}}{\epsilon_n}=\alpha^{-(s-1)L}>5$. {The l.h.s. inclusion follows since $\epsilon_{n+L}+2\epsilon_n\le 3\epsilon_n$.}
We turn to \eqref{eq:two-inclusions-2}: Recall that all $W_n$ are finite unions of intervals and denote the numbers of disjoint intervals by $w_n$. Then, by construction in \eqref{eq:W_n-def},
$w_n\le w_{n-1}+2q_{n-1}$ {(the term $W_{n-1}\setminus V_n^0$ produces at most $w_{n-1}+q_{n-1}$ intervals)}, and since $w_N=q_{N-1}\le 4q_N$,
we get inductively $w_n\le 4q_{n}$ (because $q_{n-1}/q_n\le 2/3$ for all $n$). Hence $\lambda(W_n\triangle (W_n+h))\le 4q_n|h|\le 4q_n2\delta_{n+L}$ for $h\in I_{n+L}$. Since $q_n\delta_{n+L}\le\const_s\cdot \phi^n\alpha^{s(n+L)}=\const_s\cdot\alpha^{sL}\alpha^{(s-1)n}=\const_s\cdot\alpha^{sL}\epsilon_{n}$, one can choose $L$ so large that the l.h.s.~inclusion holds.
For the second inclusion note that, by construction,  the indicator function $1_{W_n}$ has a jump at all $c_k$ $(q_n\le k<q_{n+1})$ and is constant to both sides of $c_k$ on intervals of length at least $\delta_n$.
Hence $\lambda(W_n\triangle(W_n+h))\ge |h|q_{n-1}$ for $h\in I_n$, so that
$H_n\subseteq\{h\in\T:|h|\le 3\epsilon_n/q_{n-1}\}$, and the latter set is easily seen to be contained in $I_{n-L}$ {for every $n\ge 2N+L$} when $L$ is sufficiently large.

Finally, (P\ref{item:P2}) follows from \eqref{eq:final-inclusions}, because the latter implies $h\in\bigcap_{n\ge 2L}I_{n-2L}=\{0\}$ whenever
${\lambda}(W\triangle(W+h))=0$.

\section{Some proofs}\label{sec:proofs}

\subsection{On densities related to Besicovitch sets}\label{subsec:on-densities}

For a set $A\subseteq\Z$ we denote by $d_{\N}(A)$, resp. $d_{-\N}(A)$ the density of $\N\cap A$ (resp. $(-\N)\cap A$) as a subset of $\N$ (resp. $-\N$), if it exits. the Symbols like $\dl_{\N}$, $\du_{\N}$ etc. have analogous meaning. Observe that if for a set $A\subseteq \Z$ both densities $d_{\N}(A)$, $d_{-\N}(A)$ exist and $d_{\N}(A)=d_{-\N}(A)$, then
\begin{equation*}
d_{\N}(A)=d_{-\N}(A)=\lim\limits_{n\rightarrow+\infty}\frac{1}{2n+1}|[-n,n]\cap A|=:d(A).
\end{equation*}
We claim that
if $\cB$ is Besicovitch, then
\begin{equation}\label{eq:trzy}
\begin{split}
d_{\N}(\cM_{\cB}\cup(\cM_{\cB}+r))=d_{-\N}(\cM_{\cB}\cup(\cM_{\cB}+r))=d(\cM_{\cB}\cup(\cM_B+r)).
\end{split}
\end{equation}
First observe that this is true if $\cB$ is finite (since in that case $\cM_{\cB}\cup(\cM_{\cB}+r)$ is a union of finitely many arithmetic progressions) and true also in the case $r=0$ and $\cB$ Besicovitch (then it follows since $\cM_{\cB}=-\cM_{\cB}$).
Observe that for every $A\subseteq\Z$ and every $r\in\Z$ we have
\begin{equation*}
\du_{\N}(A\cup (A+r))\le 2\du_{\N}(A).
\end{equation*}
Fix $\varepsilon>0$. Since $d(\cM_{\cB})$ exists, it follows by Davenport-Erd\"os theorem that for some finite  $S\subset\cB$ we have $\du(\cM_{\cB}\setminus\cM_S)<\varepsilon$. Then
$\du_{\N}((\cM_{\cB}\cup(\cM_{\cB}+r))\setminus (\cM_S\cup(\cM_S+r)))<2\varepsilon$ so that
\begin{equation*}
\du_{\N}(\cM_{\cB}\cup(\cM_{\cB}+r))
\le
d_{\N}(\cM_S\cup(\cM_S+r))+2\varepsilon
\le
\dl_{\N}(\cM_{\cB}\cup(\cM_{\cB}+r))+2\varepsilon.
\end{equation*}
It follows that the density $\du_{\N}(\cM_{\cB}\cup(\cM_{\cB}+r))$ exists and it is equal to the limit of $d_{\N}(\cM_S\cup(\cM_S+r))$ as $S\rightarrow \cB$. Analogous statement is true for $d_{-\N}$. Now, since (\ref{eq:trzy}) holds for finite sets $\cB$, the claim follows.
We note a simple consequence:
\begin{corollary}\label{coro:density-exists}
If $\cB$ is a Besicovitch set, then the following density exists:
$$
d(\cM_\cB\triangle(\cM_\cB+r))=2\left(d(\cM_\cB\cup(r+\cM_\cB))-d(\cM_\cB)\right)
$$
\end{corollary}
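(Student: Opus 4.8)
The plan is to reduce the statement to the elementary pointwise identity
\[
\1_{A\triangle B} \;=\; 2\cdot\1_{A\cup B} \;-\; \1_A \;-\; \1_B ,
\]
valid for arbitrary subsets $A,B\subseteq\Z$, applied with $A=\cM_\cB$ and $B=\cM_\cB+r$. Summing this identity over the symmetric window $[-n,n]$ and dividing by $2n+1$, the asserted formula follows immediately by passing to the limit $n\to\infty$ — provided all three densities appearing on the right-hand side exist. So the whole task is to check these three existence statements.

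First I would note that $d(\cM_\cB+r)=d(\cM_\cB)$ whenever the latter exists: translating by $r$ merely shifts the counting window $[-n,n]$ to $[-n-r,n-r]$, which alters the count by at most $2|r|$ and hence does not affect the limit. Since $\cB$ is a Besicovitch set, $d(\cM_\cB)$ exists by definition, and therefore so does $d(\cM_\cB+r)$, with the same value. Second, the existence of $d(\cM_\cB\cup(r+\cM_\cB))$ is precisely what the discussion culminating in \eqref{eq:trzy} establishes: the one-sided densities $d_{\N}$ and $d_{-\N}$ of $\cM_\cB\cup(\cM_\cB+r)$ both exist and coincide (obtained by the Davenport--Erd\H{o}s approximation of $\cM_\cB$ by $\cM_S$ for finite $S\subset\cB$, controlling the tail via $\du(\cM_\cB\setminus\cM_S)<\varepsilon$ and the crude bound $\du_\N(A\cup(A+r))\le 2\du_\N(A)$), so the two-sided density exists as well.

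Combining these two points with the pointwise identity and taking the limit term by term yields
\[
d(\cM_\cB\triangle(\cM_\cB+r)) \;=\; 2\,d(\cM_\cB\cup(r+\cM_\cB)) \;-\; 2\,d(\cM_\cB),
\]
which is the claim. I do not expect any genuine obstacle: the only substantial input, namely the existence of the density of the union $\cM_\cB\cup(r+\cM_\cB)$, has already been carried out in the preceding paragraph, and the remainder is routine bookkeeping with densities together with the inclusion--exclusion identity above.
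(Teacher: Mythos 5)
Your proof is correct and essentially the same as the paper's: both rest on the existence of $d(\cM_\cB)$, $d(\cM_\cB+r)$ and $d(\cM_\cB\cup(r+\cM_\cB))$ (the last being the content of \eqref{eq:trzy}), and differ only in bookkeeping — you use the pointwise identity $\1_{A\triangle B}=2\,\1_{A\cup B}-\1_A-\1_B$, while the paper writes $\cM_\cB\triangle(r+\cM_\cB)$ as the disjoint union of $(\cM_\cB\cup(r+\cM_\cB))\setminus\cM_\cB$ and $(\cM_\cB\cup(r+\cM_\cB))\setminus(r+\cM_\cB)$, which amounts to the same computation.
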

\begin{proof}
As the densities of $\cM_\cB$, $r+\cM_\cB$, and $\cM_\cB\cup(r+\cM_\cB)$ exist, also the densities of $(\cM_\cB\cup(r+\cM_\cB))\setminus\cM_\cB$ and $(\cM_\cB\cup(r+\cM_\cB))\setminus(r+\cM_\cB)$ exist, and as $\cM_\cB\triangle(r+\cM_\cB)$ is the disjoint union of the two latter sets, also its density exists.
\end{proof}

\subsection{Proof of Proposition~\ref{prop:like-Rogers}}\label{subsec:Proof-of-Prop-Rogers}
We adapt the proof of Rogers' Theorem (in \cite[Ch.~V, \S 3.]{Halberstam-Roth}) to prove the following refinement.
\begin{theorem}\label{th:rogers}
Consider  a sequence $(b_i)_{i\in I}$  of natural numbers and a sequence $(g_i)_{i\in I}$  of integers indexed by a countable  set $I$. Assume moreover that there is a finite decomposition $I=I_1\cup\ldots\cup I_N$ and integers  $\overline{g}_j$ for $j=1,\ldots,N$ such that $g_i=\overline{g}_j$ when $i\in I_j$.
Then, for every $m\in\Z$:
\begin{equation}\label{eq:rogers}
\delta(\bigcup_{i\in I}(g_i+b_i\Z))\ge \delta(\bigcup_{i\in I}(mg_i+b_i\Z)),
\end{equation}
in particular the logarithmic densities on both sides exist.
\end{theorem}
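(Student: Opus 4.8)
The plan is to strip $m$ of its prime factors one at a time and thereby reduce \eqref{eq:rogers} to a combinatorial inequality about unions of cosets of $p$-power subgroups in a finite cyclic group. First I would reduce to a finite index set $I$. By the hypothesis that the $g_i$ take only finitely many values,
\[
\bigcup_{i\in I}(g_i+b_i\Z)=\bigcup_{j=1}^N\big(\overline g_j+\cM_{B_j}\big),\qquad B_j:=\{b_i:i\in I_j\},
\]
which is a \emph{finite} union of translates of sets of multiples, and the same holds with $mg_i$ in place of $g_i$. Choosing finite $B_j^{(n)}\uparrow B_j$ and setting $A_n:=\bigcup_j(\overline g_j+\cM_{B_j^{(n)}})$, the $A_n$ are periodic and increase to $A:=\bigcup_{i\in I}(g_i+b_i\Z)$, while the upper logarithmic density of $A\setminus A_n$ is at most $\sum_j\overline\delta(\cM_{B_j}\setminus\cM_{B_j^{(n)}})\to0$ by the Davenport--Erd\H{o}s theorem (whose relevant form --- for any set $B$ of naturals, $\delta(\cM_B)$ exists and equals $\lim_N d(\cM_{B\cap[1,N]})$ --- underlies the use made of it in Subsection~\ref{subsec:on-densities}). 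Hence $\delta(A)=\lim_n\delta(A_n)$ exists, and likewise for the $mg_i$-union; so it suffices to prove \eqref{eq:rogers} for finite $I$. This is the one place where the finiteness hypothesis on $\{g_i\}$ enters.

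For finite $I$ put $L=\lcm\{b_i\}$, so all sets in sight are $L$-periodic and $\delta$ is normalized counting in $\Z/L\Z$. Since $\bigcup_i(-g_i+b_i\Z)=-\bigcup_i(g_i+b_i\Z)$ has the same density I may assume $m\ge0$, and $m=0$ is identical to $m=L$ because $Lg_i\equiv0\pmod{b_i}$. Writing $m>0$ as a product of primes and removing them one at a time (the finiteness of $\{c g_i\}$ persists for every integer $c$), it remains to show for a single prime $p$ that $\#\{x\in\Z/L\Z:\exists i,\ b_i\mid x-g_i\}\ge\#\{x\in\Z/L\Z:\exists i,\ b_i\mid x-pg_i\}$. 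If $p\nmid L$ this is an equality, since $x\mapsto px$ is a bijection of $\Z/L\Z$ carrying the first set onto the second ($pb_i\Z$ and $b_i\Z$ cut out the same residues mod $L$). If $p\mid L$, write $L=p^eM$, $b_i=p^{f_i}m_i$ with $p\nmid m_i$, split $\Z/L\Z\cong\Z/p^e\Z\times\Z/M\Z$ by the Chinese Remainder Theorem, and apply the bijection $z\mapsto p^{-1}z$ in the second coordinate (valid because $p$ is invertible modulo each $m_i$). Fibrewise over the second coordinate this reduces matters to the following $p$-adic statement, for an arbitrary finite family of exponents $f_i\in\{1,\dots,e\}$ and residues $g_i\in\Z/p^e\Z$ (if some $f_i=0$ both sides below equal $p^e$):
\[
\Big|\bigcup_i\,(g_i+p^{f_i}\Z/p^e\Z)\Big|\ \ge\ \Big|\bigcup_i\,(pg_i+p^{f_i}\Z/p^e\Z)\Big|.
\]

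To prove this inequality I would set $C_i:=g_i+p^{f_i}\Z/p^e\Z$, $C_i':=pg_i+p^{f_i}\Z/p^e\Z$, and $D_i:=g_i+p^{f_i-1}\Z/p^e\Z$, so that $C_i\subseteq D_i$, $|D_i|=p|C_i|$, and $C_i'=\mu_p(D_i)$, where $\mu_p$ is multiplication by $p$ on $\Z/p^e\Z$. Each $D_i$, and hence $\bigcup_iD_i$, is a union of cosets of $\ker\mu_p=p^{e-1}\Z/p^e\Z$, so $|\bigcup_iC_i'|=|\mu_p(\bigcup_iD_i)|=\tfrac1p|\bigcup_iD_i|$. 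The crucial structural fact is that the subgroups $p^f\Z/p^e\Z$ $(0\le f\le e)$ form a chain, so the $C_i$ are pairwise nested or disjoint; taking the inclusion-maximal ones $C_{i_1},\dots,C_{i_t}$ (automatically pairwise disjoint) one has $|\bigcup_iC_i|=\sum_s|C_{i_s}|$, and since $C_j\subseteq C_{i_s}$ forces $D_j\subseteq D_{i_s}$ one also has $\bigcup_iD_i=\bigcup_sD_{i_s}$, whence $|\bigcup_iD_i|\le\sum_s|D_{i_s}|=p\sum_s|C_{i_s}|=p|\bigcup_iC_i|$. Combining the two chains of (in)equalities gives $|\bigcup_iC_i'|\le|\bigcup_iC_i|$, as needed. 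Summing over the fibres of the second coordinate, dividing by $L$, iterating over the primes of $m$, and passing to the limit from the first step then yields \eqref{eq:rogers}.

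I expect all the reductions to be routine bookkeeping; the substantive point is the $p$-adic inequality above, and what makes it go through --- the cosets in play are automatically nested or disjoint --- is precisely what fails if one tries to argue directly in $\Z/L\Z$ before localizing at the prime $p$. This is the mechanism behind the proof of Rogers' theorem in \cite[Ch.~V]{Halberstam-Roth} that the present argument adapts.
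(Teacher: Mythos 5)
Your proof is correct, and I checked the key steps: the reduction to finite $I$, the one-prime-at-a-time reduction, the CRT localization, and the coset inequality in $\Z/p^e\Z$ all hold as you state them. The outer layer of your argument (passing from countable $I$ to finite subfamilies via the Davenport--Erd\H{o}s theorem, which is exactly where the hypothesis that the $g_i$ take only finitely many values is needed) is essentially the same as the paper's final step, though you place it first rather than last. The finite case, however, is organized genuinely differently. The paper inducts on the number of prime divisors of $\ell=\lcm\{b_i\}$: it splits $\ell=uv$ into coprime parts, chooses $m',m''$ with $m'\equiv m,\,1$ and $m''\equiv 1,\,m$ modulo $u,v$, fibers over one CRT coordinate to apply the inductive hypothesis twice, and its base case (prime-power moduli) compares maximal cosets with respect to the orders $\preceq_g$ and $\preceq_{mg}$. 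You instead factor the multiplier $m$ into primes, dispose of primes not dividing $L$ by the automorphism $x\mapsto px$, and for $p\mid L$ localize at $p$ via CRT; the arithmetic heart becomes the single clean inequality $\bigl|\bigcup_i(pg_i+p^{f_i}\Z/p^e\Z)\bigr|\le\bigl|\bigcup_i(g_i+p^{f_i}\Z/p^e\Z)\bigr|$, which you prove by enlarging each coset to $D_i=g_i+p^{f_i-1}\Z/p^e\Z$, identifying $pg_i+p^{f_i}\Z/p^e\Z$ with $\mu_p(D_i)$, and counting through $\ker\mu_p$ together with the nested-or-disjoint structure of cosets of the chain $p^f\Z/p^e\Z$. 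What your route buys is that it needs no induction beyond iterating over the prime factors of $m$ and isolates the substantive point as a transparent statement in a cyclic $p$-group; what the paper's route buys is that it treats an arbitrary multiplier $m$ in one pass per coprime factor of $\ell$ and stays closer to the Halberstam--Roth presentation of Rogers' theorem that both arguments adapt. Both versions ultimately rest on the same two pillars: the chain structure of subgroups of $\Z/p^e\Z$ and the Davenport--Erd\H{o}s approximation.
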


\begin{proof}
Let us first consider the case of finite $I$. In such case we can assume that $I=\{1,\ldots,N\}$, $I_j=\{j\}$ and $\overline{g}_j=g_j$ for $j\in I$.
We proceed by induction on the number $\omega(\ell)$ of the prime divisors of $\ell=\lcm\{b_i:i\in I\}$.
First observe that the assertion is trivially satisfied provided $b_i=1$ for some $i$.

1) If $\omega(\ell)=1$, then there exists a prime $p$ such that the numbers $b_i$ are powers of $p$.  In this case
$$
(g_i+b_i\Z)\cap (g_j+b_j\Z)\neq \emptyset,
$$
   if and only if one of the sets $g_i+b_i\Z, g_j+b_j\Z$ is contained in the other. Let $\preceq_g$ be the partial order in $I$ defined by $i\preceq_gj$ if $g_i+b_i\Z\subseteq g_j+b_j\Z$. Let $\max(\preceq_g)$ be the set of the maximal elements in $I$ with respect to this order. Then
   $$
   \bigcup_{i\in I}(g_i+b_i\Z)=\bigcup_{i\in \max(\preceq_g)}(g_i+b_i\Z).
   $$
Since the progressions $g_i+b_i\Z$ and  $g_j+b_j\Z$ are disjoint for incomparable $i,j$, it follows that
$$
\delta(\bigcup_{i\in I}(g_i+b_i\Z))=\sum_{i\in\max(\preceq_g)}\frac{1}{b_i}.
$$
Similarly,
$$
\delta(\bigcup_{i\in I}(mg_i+b_i\Z))=\sum_{i\in\max(\preceq_{mg})}\frac{1}{b_i},
$$
where $i\preceq_{mg}j$ if $mg_i+b_i\Z\subseteq mg_j+b_j\Z$. Observe that $i\preceq_gj$ implies $i\preceq_{mg}j$, so $\max(\preceq_{mg})\subseteq \max(\preceq_{g})$ and
\eqref{eq:rogers} follows.

2) Assume now that $N$ is a natural number and \eqref{eq:rogers} is true whenever $\omega(\ell)< N$. Assume that $\omega(\ell)=N$ and let
$$
\ell=u\cdot v,
$$
where $u$ and $v$ are natural numbers with $\omega(u)<N$, $\omega(v)<N$ and $\gcd(u,v)=1$.

We shall use the following notation: for $q\in\N$ we denote by $\Z_q$  the cyclic group $\Z/q\Z$ and given an integer $n$ we denote by $[n]_q$ the residue class $n+q\Z\in\Z_q$. Moreover, if $h$ is an element of a group $H$, then $\langle h\rangle$ denotes the subgroup of $H$ generated by $h$.  Clearly
\begin{equation}\label{eq:interpret}
\delta(\bigcup_{i\in I}(n_i+b_i\Z))=\frac{1}{\ell}\left|\bigcup_{i\in I}([n_i]_{\ell}+\langle [b_i]_{\ell}\rangle)\right|
\end{equation}
for every choice of $n_i\in\Z$ for $i\in I$.

Under our assumptions, thanks to CRT, the map $[n]_{\ell}\mapsto ([n]_u,[n]_v)$ establishes  an isomorphism of abelian groups
$$
\Pi:\Z_{\ell}\rightarrow \Z_u\times \Z_v,
$$
and this isomorphism maps a coset $[n]_{\ell}+\langle [b]_{\ell}\rangle$ of the subgroup generated by $[b]_{\ell}$ onto the product
$$
([n]_{u}+\langle [b]_{u}\rangle)\times ([n]_{v}+\langle [b]_{v}\rangle)
$$
for every $n\in\Z$ and $b\in\N$.
Let $m'$, $m''$ be integers such that $
m'=m \mod u$, $ m'=1 \mod v$, $m''=1 \mod u$, $m''=m \mod v$.
Then $m'm''=m\mod\ell$.

First we prove that  $\delta(\bigcup_{i\in I}(g_i+b_i\Z))\ge \delta(\bigcup_{i\in I}(m'g_i+b_i\Z))$, equivalently
\begin{equation}\label{eq:mprim}
\left|\bigcup_{i\in I}([g_i]_{\ell}+\langle [b_i]_{\ell}\rangle)\right|\ge \left|\bigcup_{i\in I}([m'g_i]_{\ell}+\langle [b_i]_{\ell}\rangle)\right|.
\end{equation}

Fix an element $a\in\Z_v$. Let $I_a=\{i\in I: a\in [g_i]_v+\langle [b_i]_{v}\rangle\}$. Then, as $m'=1\mod v$,  $I_a=\{i\in I: a\in [m'g_i]_v+\langle [b_i]_{v}\rangle\}$. Observe that
\begin{equation*}
(\Z_u\times \{a\})\cap \Pi\left(\bigcup_{i\in I}([g_i]_{\ell}+\langle [b_i]_{\ell}\rangle)\right)=\bigcup_{i\in I_a}([g_i]_{u}+\langle [b_i]_{u}\rangle)\times \{a\} .
\end{equation*}
Similarly
\begin{equation*}
(\Z_u\times \{a\})\cap \Pi\left(\bigcup_{i\in I}([m'g_i]_{\ell}+\langle [b_i]_{\ell}\rangle)\right)=\bigcup_{i\in I_a}([m'g_i]_{u}+\langle [b_i]_{u}\rangle)\times \{a\}.
\end{equation*}
Since $\omega(u)<\omega(\ell)$, by the inductive hypothesis we have
\begin{equation*}
\left|\bigcup_{i\in I_a}([g_i]_{u}+\langle [b_i]_{u}\rangle)\right|\ge \left|\bigcup_{i\in I_a}([m'g_i]_{u}+\langle [b_i]_{u}\rangle)\right|.
\end{equation*}
Since $a$ is arbitrary,  \eqref{eq:mprim} follows.

Similarly, interchanging the roles of $u$, $v$, we prove
\begin{equation}\label{eq:mbis}
\left|\bigcup_{i\in I}([m'g_i]_{\ell}+\langle [b_i]_{\ell}\rangle)\right|\ge \left|\bigcup_{i\in I}([m''m'g_i]_{\ell}+\langle [b_i]_{\ell}\rangle)\right|.
\end{equation}

As $m'm''=m\mod \ell$, \eqref{eq:mprim} and \eqref{eq:mbis} together with \eqref{eq:interpret} prove \eqref{eq:rogers}.

Now let us consider the general case.
It is enough to prove that the logarithmic  density of the set $\bigcup_{i\in F}(g_i+b_i\Z)$ \footnote{This set possesses logarithmic density, because it is a union of finitely many arithmetic progressions.}, where $F$ is a finite subset of $I$, converges (as $F$ converges to $I$) to the lower and to the upper logarithmic density of $\bigcup_{i\in I}(g_i+b_i\Z)$. The analogous statement with $g_i$ interchanged with $mg_i$ will follow in the same way. Observe that
$$
\bigcup_{i\in I}(g_i+b_i\Z)=\bigcup_{j=1}^N\bigcup_{i\in I_j}(\overline{g}_j+b_i\Z)=\bigcup_{j=1}^N\bigcup_{i\in F_j}(\overline{g}_j+b_i\Z)\cup\bigcup_{j=1}^ND_j,
$$
where $F_j=I_j\cap F$, for some sets $D_j\subseteq \bigcup_{i\in I_j}(\overline{g}_j+b_i\Z)$, $j=1,\ldots,N$ such that the sets
$$
\bigcup_{j=1}^N\bigcup_{i\in F_j}(\overline{g}_j+b_i\Z),D_1,\ldots,D_N
$$
are pairwise disjoint\footnote{We set $D'_j=\bigcup_{i\in I_j}(\overline{g}_j+b_i\Z)\setminus \bigcup_{i\in F_j}(\overline{g}_j+b_i\Z)$, and $D_1=D'_1$, $D_2=D'_2\setminus D_1$, $D_3=D'_3\setminus (D_1,\cup D_2)$.  etc.}
Then
$$
\overline{\delta}(\bigcup_{j=1}^N\bigcup_{i\in I_j}(\overline{g}_j+b_i\Z))\le \overline{\delta}(\bigcup_{j=1}^N\bigcup_{i\in F_j}(\overline{g}_j+b_i\Z))+\sum_{j=1}^N\overline{\delta}(D_j).
$$
Since $D_j\subseteq \bigcup_{i\in I_j}(\overline{g}_j+b_i\Z)\setminus \bigcup_{i\in F_j}(\overline{g}_j+b_i\Z)$ for $j=1,\ldots,N$, by Davenport-Erd\"os theorem, the sum
$\sum_{j=1}^N\overline{\delta}(D_j)$ can be made  smaller than fixed $\varepsilon>0$ provided $F$ contains  some finite set $F_{\varepsilon}\subseteq I$ depending on $\varepsilon$ only.
Therefore
$$
\overline{\delta}(\bigcup_{j=1}^N\bigcup_{i\in I_j}(\overline{g}_j+b_i\Z))-\varepsilon\le {\delta}(\bigcup_{j=1}^N\bigcup_{i\in F_j}(\overline{g}_j+b_i\Z))\le \underline{\delta}(\bigcup_{j=1}^N\bigcup_{i\in I_j}(\overline{g}_j+b_i\Z))
$$
if $F_{\varepsilon}\subseteq F$. The proof is complete.
\end{proof}

Theorem~\ref{th:rogers} yields the following
\begin{corollary}\label{cor:monotone} If $r|r'$, then
\begin{equation*}
\delta(\cM_{\cB}\cup(\cM_{\cB}+r))\ge \delta(\cM_{\cB}\cup(\cM_{\cB}+r')).
\end{equation*}
\end{corollary}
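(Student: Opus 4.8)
The plan is to deduce this directly from Theorem~\ref{th:rogers}, by choosing the data $(b_i)_{i\in I}$, $(g_i)_{i\in I}$ and the multiplier $m$ so that the two unions appearing in \eqref{eq:rogers} are exactly $\cM_\cB\cup(\cM_\cB+r)$ and $\cM_\cB\cup(\cM_\cB+r')$.

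First I would dispose of the case $r=0$: then $r\mid r'$ forces $r'=0$ and there is nothing to prove. So assume $r\neq0$ and write $r'=mr$ with $m\in\Z$. Take $I$ to be the disjoint union of two copies of $\cB$, say $I=\cB\times\{1,2\}$, and set $b_{(b,1)}=b_{(b,2)}=b$ for every $b\in\cB$, while $g_{(b,1)}=0$ and $g_{(b,2)}=r$. With the decomposition $I=I_1\cup I_2$, $I_j=\cB\times\{j\}$, and constants $\overline g_1=0$, $\overline g_2=r$, all hypotheses of Theorem~\ref{th:rogers} are met ($I$ is countable and $N=2$). Then
\begin{equation*}
\bigcup_{i\in I}(g_i+b_i\Z)=\bigcup_{b\in\cB}b\Z\;\cup\;\bigcup_{b\in\cB}(r+b\Z)=\cM_\cB\cup(\cM_\cB+r),
\end{equation*}
and, since $m\cdot 0=0$ and $m\cdot r=r'$,
\begin{equation*}
\bigcup_{i\in I}(mg_i+b_i\Z)=\bigcup_{b\in\cB}b\Z\;\cup\;\bigcup_{b\in\cB}(r'+b\Z)=\cM_\cB\cup(\cM_\cB+r').
\end{equation*}
Applying \eqref{eq:rogers} to this data yields $\delta(\cM_\cB\cup(\cM_\cB+r))\ge\delta(\cM_\cB\cup(\cM_\cB+r'))$, which is the assertion; the existence of the logarithmic densities on both sides is part of the conclusion of Theorem~\ref{th:rogers}.

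There is essentially no obstacle here beyond bookkeeping. The one point worth a moment's attention is that the index set $I$ above is genuinely infinite whenever $\cB$ is, so it matters that Theorem~\ref{th:rogers} was proved for countable $I$ and not just finite $I$ — precisely the ``general case'' treated at the end of its proof via the Davenport–Erdős theorem. (If one later wants the version of Proposition~\ref{prop:like-Rogers} phrased with the natural density $d$ rather than the logarithmic density $\delta$, one simply invokes that for a Besicovitch set $\cB$ the relevant natural densities exist by Subsection~\ref{subsec:on-densities}, hence agree with the logarithmic ones.)
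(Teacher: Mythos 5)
Your proposal is correct and is essentially identical to the paper's own proof: the paper likewise applies Theorem~\ref{th:rogers} with $I=\cB\times\{0,1\}$, $b_{(c,j)}=c$, $g_{(c,0)}=0$, $g_{(c,1)}=r$ and $m=r'/r$. Your explicit handling of the trivial case $r=0$ and the remark that the infinite-$I$ version of the theorem is needed are just minor bookkeeping points, not a different route.
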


\begin{proof} We apply Theorem \ref{th:rogers} to $I=\cB\times\{0,1\}$, $b_{(c,j)}=c$, $g_{(c,0)}=0$, $g_{(c,1)}=r$ for $c\in\cB$, $j\in\{0,1\}$ and $m=\frac{r'}{r}$.
\end{proof}

\begin{proof}[Proof of Proposition~\ref{prop:like-Rogers}]
In the Besicovitch case also the usual densities of the sets from Corollary~\ref{cor:monotone} exist (see Subsection~\ref{subsec:on-densities}). Since they coincide with the logarithmic densities, this proves Proposition~\ref{prop:like-Rogers}.
\end{proof}

\bibliography{amorphic} \bibliographystyle{abbrv}

\end{document}